\def \equi#1{\mathrel{\mathop{\kern 0pt\sim}\limits_{#1}}}
\newcommand*{\qfrac}[2]{\ensuremath{#1 / \! \raisebox{ - .65ex}{\ensuremath{#2}}}}
\newcommand*{\legendre}[1]{\left(\frac{#1}{p}\right)}
\newcommand{\Matrix}[1]{\begin{pmatrix} #1 \end{pmatrix}}
\newtheorem{prop}{Proposition}
\newtheorem{cor}{Corollary}
\newtheorem{theo}{Theorem}
\newtheorem{lemme}{Lemma}
\theoremstyle{definition}
\newtheorem{defi}{Definition}
\newtheorem{rmq}{Remark}
\newcommand{\setC}{\mathbb{C}}
\newcommand{\setE}{\mathbb{E}}
\newcommand{\setF}{\mathbb{F}}
\newcommand{\setN}{\mathbb{N}}
\newcommand{\setQ}{\mathbb{Q}}
\newcommand{\setR}{\mathbb{R}}
\newcommand{\setZ}{\mathbb{Z}}
\newcommand{\E}{\mathcal{E}}
\newcommand{\calH}{\mathcal{H}}
\newcommand{\M}{\mathcal{M}}
\newcommand{\N}{\mathcal{N}}
\newcommand{\calS}{\mathcal{S}}
\numberwithin{equation}{section}
\title{Fourier coefficients of modular forms of half-integral weight in arithmetic progressions}
\author{\textsc{Corentin DARREYE}}
\address{Univ. Bordeaux, CNRS, Bordeaux INP, IMB, UMR 5251,  F-33400, Talence, France}
\email{corentin.darreye@u-bordeaux.fr}
\begin{document}
\renewcommand{\proofname}{Proof}
\renewcommand{\epsilon}{\varepsilon}
\renewcommand{\le}{\leqslant}
\renewcommand{\ge}{\geqslant}

\maketitle

\begin{abstract}

We study the probabilistic behavior of sums of Fourier coefficients in arithmetic progressions. We prove a result analogous to previous work  of  Fouvry-Ganguly-Kowalski-Michel and Kowalski-Ricotta  in the context of half-integral weight holomorphic cusp forms and for prime power modulus. We actually show that these sums follow in a suitable range a mixed Gaussian distribution which comes from the asymptotic mixed distribution of Salié sums. 

\end{abstract}

\tableofcontents

 \section{Introduction and statement of the results}
 \label{section1}
\vskip 0.5 cm \subsection{The framework}

Let $ f$ be a fixed cusp form of weight $\kappa$. In this work we are interested in the sums over arithmetic progressions of its normalized Fourier coefficients  $\hat f_\infty(n)$ at the cusp at infinity. Let $q$ be an integer and $w$ be a smooth non-zero real-valued function with compact support in $]0, +\infty[$. Let $X$ be a positive real number and put
 \begin{equation}
 \label{defS}
 S(X,q,a)=\sum_{n=a\:[q]}\hat f_\infty(n)w\left(\frac{n}{X}\right)
 \end{equation}
 for any integer $a$ coprime with $q$.\\

By the \textit{square root cancellation philosophy}, the size of $S(X,q,a)$ is expected to be bounded by $\sqrt{X/q}$ since the length of summation is roughly $X/q$ and the terms are bounded on average. This philosophy leads us to consider
\begin{equation}
\label{defE}
E(X,q,a)=\frac{S(X,q,a)}{\sqrt{X/q}}.
\end{equation}

 In \cite{FGKM},  a probabilistic study of such a quantity is performed when $q$ is a prime number which goes to infinity and for a more precise range of $q$ and $X$.  Precisely, the authors compute the moments of $E(X,q,a)$ and the introduction of the smooth cut off $w$ in \eqref{defS} makes all the moments converge. In particular, the following is proved.
 
 \begin{theo}[Fouvry, Ganguly, Kowalski, Michel \cite{FGKM}]
 \label{theo-FGKM}
 Let $p$ be an odd prime and $f$ be a Hecke eigenform of level 1 and integral weight $\kappa$. If $X$ satisfies $p^{2-\epsilon}\ll_\epsilon X=o(p^2)$ for any $\epsilon >0$, then the sequence of random variables
 $$\begin{array}{ccc}
 \setF_p ^* &\to&\setR\\
 a&\mapsto& E(X,p,a)
 \end{array}$$ 
 defined in \eqref{defE}, converges in law when $p\to+\infty$ to a Gaussian random variable of mean 0 and explicitly computable variance depending on the Petersson norm of f and the $L^2$-norm of w. 
 \end{theo}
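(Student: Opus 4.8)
The plan is to establish the convergence in law by the method of moments. Since a centered Gaussian is determined by its moments, it suffices to show that for every fixed $k\ge 1$ the empirical moments
\[
M_k(p):=\frac{1}{p-1}\sum_{a\in\setF_p^*}E(X,p,a)^k
\]
converge, as $p\to+\infty$, to the moments of a centered Gaussian, i.e.\ $M_{2k+1}(p)\to 0$ for parity reasons and $M_{2k}(p)\to (2k-1)!!\,\sigma^{2k}$ for an explicit $\sigma^2$; the Fr\'echet--Shohat theorem then upgrades moment convergence to convergence in law.

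First I would linearize $S(X,p,a)$. Detecting the congruence $n\equiv a\ [p]$ by additive characters (with $e(x)=e^{2\pi i x}$) gives
\[
S(X,p,a)=\frac1p\sum_{h\bmod p}e\!\left(\tfrac{-ha}{p}\right)\sum_{n}\hat f_\infty(n)\,e\!\left(\tfrac{hn}{p}\right)w\!\left(\tfrac nX\right),
\]
the term $h=0$ being negligible after normalization because $f$ is cuspidal (there is no main term). For $h\neq 0$ I would apply the Voronoi summation formula for the holomorphic form $f$, which turns each additively twisted sum into a dual sum over $m$ with an inverse-argument phase $e(-\bar h m/p)$ and a Hankel-type transform $\check w$ of the test function, whose effective support is $m\lesssim p^2/X=:N$. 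Carrying out the $h$-sum collapses the two phases into a Kloosterman sum $S(a,m;p)=\sum_{h}e((ha+\bar h m)/p)$, so that, up to an admissible error and using $S(a,m;p)=S(1,am;p)$,
\[
E(X,p,a)\approx \frac{\sqrt X}{p}\sum_{m}\hat f_\infty(m)\,\check w\!\left(\tfrac{mX}{p^2}\right)\mathrm{Kl}(am;p),\qquad \mathrm{Kl}(n;p):=\tfrac{1}{\sqrt p}\,S(1,n;p),
\]
with $|\mathrm{Kl}(n;p)|\le 2$ by Deligne. The normalization by $\sqrt{X/p}$ is exactly what makes the right-hand side of size $O(1)$ on average.

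With this expression the $2k$-th moment becomes a sum over tuples $(m_1,\dots,m_{2k})$ of $\prod_i\hat f_\infty(m_i)\check w(m_iX/p^2)$ weighted by the correlation average $\frac{1}{p-1}\sum_{a}\prod_i\mathrm{Kl}(m_ia;p)$, and the heart of the matter is to evaluate these correlations. By the equidistribution theory of Kloosterman sums --- Katz's computation that the geometric monodromy of the Kloosterman sheaf is large, together with Deligne's equidistribution --- the family $\bigl(\mathrm{Kl}(m_1a;p),\dots,\mathrm{Kl}(m_{2k}a;p)\bigr)$ behaves, as $a$ varies, like a vector of independent Sato--Tate variables, up to a power saving in $p$ whenever the multipliers $m_i$ are in \emph{generic} position. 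Hence the correlation average is $o(1)$ unless the $m_i$ pair up, each matched pair contributing the second moment $\frac{1}{p-1}\sum_a\mathrm{Kl}(ma;p)^2\to 1$. Summing over the $(2k-1)!!$ pairings, and using $\sum_{m\le N}\hat f_\infty(m)^2\sim c_fN$ (Rankin--Selberg, with $c_f$ expressible through the Petersson norm of $f$) together with the Riemann-sum/Parseval identity $\frac{X}{p^2}\sum_m|\check w(mX/p^2)|^2\to \|\check w\|_2^2=\|w\|_2^2$, I would obtain $M_{2k}(p)\to(2k-1)!!\,\sigma^{2k}$ with $\sigma^2=c_f\|w\|_2^2$.

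The main obstacle is to make the last two steps uniform and to pin down the precise range of $X$. On one hand $X=o(p^2)$ forces $N=p^2/X\to+\infty$, so that the number of genuinely independent Kloosterman contributions diverges and the pairing combinatorics produces a true Gaussian rather than the law of a single Kloosterman (here: Sali\'e) sum; on the other hand $X\gg p^{2-\epsilon}$ forces $N\ll p^\epsilon$, which is exactly what is needed to absorb the off-diagonal terms, since there are $\ll N^{2k}$ unpaired tuples each saving a factor $\ll p^{-\delta}$ from the monodromy input, so their total is $\ll N^{2k}p^{-\delta}\to 0$ only in this window. Controlling these off-diagonal correlations of products of Kloosterman sums uniformly in $k$, together with the Voronoi error terms and the truncation of the dual sum, is where the real work lies; the algebraic-geometric independence statement --- the vanishing of the relevant invariants of tensor powers of the Kloosterman sheaf --- is the key external input driving the whole computation.
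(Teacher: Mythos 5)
Theorem~\ref{theo-FGKM} is imported from \cite{FGKM} and the paper itself gives no proof of it, only the sketch in its overview (Section 1.3); your proposal follows exactly that route --- congruence detection, Vorono\u\i\; summation turning $S(X,p,a)$ into a short dual sum of length $N=p^2/X$ weighted by multiplicatively shifted Kloosterman sums $\mathrm{Kl}(m a;p)$, and the Katz--Deligne monodromy/equidistribution input forcing every non-paired tuple to save a power of $p$, so that the moments converge to Gaussian moments of variance $c_f\|w\|_2^2$ in the window $p^{2-\epsilon}\ll X=o(p^2)$. Your outline is correct and is essentially the same argument as the cited proof, the only minor gloss being that tuples whose entries repeat with even multiplicity $\ge 4$ do not have $o(1)$ correlation averages (their Sato--Tate moments are nonzero) but are negligible for combinatorial reasons, since they occupy a thinner diagonal as $N\to+\infty$.
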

 
 \begin{rmq} \
 \begin{enumerate}
 \item[$\bullet$] Actually \cite{FGKM} establishes the convergence for the ``natural'' error term 
 $$\frac{S(X,p,\cdot)-M_p}{\sqrt{X/p}}$$
 with $M_p = \frac{1}{p}\sum\limits_{n\ge 1}\hat f_\infty(n)w\left(\frac{n}{X}\right)$. But as mentioned in \cite{FGKM}, the main term $M_p$ decays rapidly to 0. 
 
 \item[$\bullet$]  The Fourier coefficients of $f$ are real up to a multiplicative factor since, in this case, the Hecke eigenvalues are real.
 
  \end{enumerate}
 \end{rmq}
 
 Analytic properties of sums of type \eqref{defS} with no smooth cutoff $w$ have been studied in many works before \cite{FGKM}. The evaluation of its variance when the weight $\kappa$ is an integer has drawn particular interest (see for example \cite{Bloomeraverage}, \cite{Luaverage} or \cite{LauZhao}). 
 
Obviously, this sum is interesting only for $q<X$ and in \cite{LauZhao}, it appears that the variance becomes very explicit in the range $X^{1/2}<q<X$ (the range $q< X^{1/2}$ remains more mysterious). Precisely, the authors in \cite{LauZhao} show that for $X^{1/2+ \epsilon}\ll q \ll X^{1-\epsilon}$,

\begin{equation}
\label{lauzhao}
\sum_{a=1}^q\Big|\!\!\!\sum_{\tiny\begin{array}{c}n\le X\\n=a\:[q]\end{array}}\hat f_\infty(n)\:\Big|^2\sim c_f X \phantom{eeeee} \text{ as } X\to+\infty
\end{equation}
and where $c_f$ is the residue at $s=1$ of the Rankin-Selberg $L$-function of $f$.

However, if $X^{1/4+\epsilon}\ll q \ll X^{1/2-\epsilon} $ then Lau and Zhao show that the sum in \eqref{lauzhao} is bounded up to a multiplicative constant by $qX^{1/2}$ which is smaller than $X$ for $q\ll X^{1/2-\epsilon}$.\\

 Actually, if one removes the smooth cutoff $w$ in the definition of $E(X,p,a)$ then Theorem \ref{theo-FGKM} still holds. This non-trivial fact has been shown in \cite{LesYesh}.\\
 
 Theorem \ref{theo-FGKM} has been generalized in \cite{KR} to any Hecke-Maass cusp form for the group $\text{GL}_d$ but in this case the Fourier coefficients may not be real so they can satisfy an asymptotic Gaussian distribution with complex values (where $\setC$ is identified with $\setR^2$). 
 
 Let us recall that if $f$ is a Hecke-Maass cusp form for $\text{GL}_d$ then it has Fourier coefficients of the form
 $$a_f(m_1,\ldots,m_{d-1})$$ 
 for any positive integers $m_1,\ldots,m_{d-2}$ and non-zero integer $m_{d-1}$. If, in this case, we let 
 $$\hat f_\infty(n)=a_f(n)=a_f(n,1,\ldots,1)$$  for any $n\ge1$, then these coefficients satisfy 
 $$\overline{a_f(m)}=a_{f^*}(m)$$ where $f^*$ is the dual of $f$. Thus, these coefficients are real if $f$ is self-dual \emph{i.e.} $f^*=f$. 
 
 The following is proved in \cite{KR}. 
 
 \begin{theo}[Kowalski, Ricotta \cite{KR}]
 \label{theo-KR}
 Let $p$ be an odd prime, $f$ be an even or odd ${\rm GL}_d$  Hecke-Maass cusp form and $\hat f_\infty(n)=a_f(n,1,\ldots,1)$ be its Fourier coefficients with $\hat f_\infty(1)=1$. If $X$ satisfies $p^{d-\epsilon}\ll_\epsilon X=o(p^d)$ for any $\epsilon >0$, then the sequence of random variables
 $$\begin{array}{ccc}
 \setF_p ^* &\to&\setC\\
 a&\mapsto&E(X,p,a)
 \end{array}$$ 
 defined in \eqref{defE}, converges in law when $p\to+\infty$ to a real (respectively complex) Gaussian random variable of mean 0 and explicitly computable covariance matrix depending on the Petersson norm of f and the $L^2$-norm of w if $f$ is self-dual (respectively not self-dual).
 \end{theo}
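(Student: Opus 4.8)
The plan is to establish the convergence in law by the method of moments. Since a Gaussian (real, or complex viewed on $\setR^2$) is determined by its moments, it suffices to prove that for every pair of integers $k,l\ge 0$ the empirical mixed moments
$$\frac{1}{p-1}\sum_{a\in\setF_p^*}E(X,p,a)^{k}\,\overline{E(X,p,a)}^{\,l}$$
converge as $p\to+\infty$ to the corresponding moment $\setE\big[Z^{k}\overline{Z}^{\,l}\big]$ of the target Gaussian $Z$. The first moment tends to $0$ (the zero frequency below contributes only the rapidly decaying main term $M_p$), so the limiting moments are forced to obey the Wick/Isserlis relations; identifying them will simultaneously yield the covariance and decide whether $Z$ is real or complex.

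First I would detect the congruence $n\equiv a\ [p]$ by additive characters, writing
$$E(X,p,a)=\frac{1}{p\sqrt{X/p}}\sum_{h=0}^{p-1}e\!\left(-\frac{ha}{p}\right)T_f(h),\qquad T_f(h)=\sum_{n\ge1}\hat f_\infty(n)\,e\!\left(\frac{hn}{p}\right)w\!\left(\frac{n}{X}\right),$$
and discard the negligible $h=0$ term. For $h\not\equiv 0\ [p]$ I would apply the ${\rm GL}_d$ Voronoi summation formula, turning $T_f(h)$ into a dual sum over the coefficients $a_{f^*}(m)=\overline{a_f(m)}$ of the dual form, weighted by a Hankel-type transform $\tilde w$ of $w$ (built from the gamma factors of $f$ and carrying the even/odd sign) and twisted by a rank-$(d-1)$ hyper-Kloosterman sum $\mathrm{Kl}_{d-1}(\bar h m;p)$. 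Summing the detection character $e(-ha/p)$ against this $\bar h$-dependence over the frequencies $h$ then promotes it to a rank-$d$ hyper-Kloosterman sum $\mathrm{Kl}_d(\pm am;p)$. The upshot is that, up to explicit powers of $p$ and $X$ and a negligible error,
$$E(X,p,a)\ \approx\ \frac{1}{\sqrt{L}}\sum_{m\le L} c_m\,\frac{\mathrm{Kl}_d(\pm am;p)}{p^{(d-1)/2}},\qquad L\asymp \frac{p^{d}}{X},$$
with deterministic $c_m$ built from $a_{f^*}(m)$ and $\tilde w$. The two range hypotheses play complementary roles: $X=o(p^{d})$ forces $L\to+\infty$, putting us in a genuine central-limit regime (a single normalized $\mathrm{Kl}_d$ is only Sato--Tate distributed, but an average of $L\to\infty$ of them can be Gaussian), while $X\gg p^{d-\epsilon}$ forces $L\ll p^{\epsilon}$, keeping the dual sum short enough that the error terms below remain power-saving.

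Expanding the $(k,l)$ mixed moment reduces everything to averages over $a$ of products of normalized hyper-Kloosterman sums,
$$\frac{1}{p-1}\sum_{a\in\setF_p^*}\ \prod_{i=1}^{k}\frac{\mathrm{Kl}_d(\pm am_i;p)}{p^{(d-1)/2}}\ \prod_{j=1}^{l}\frac{\overline{\mathrm{Kl}_d(\pm am'_j;p)}}{p^{(d-1)/2}}.$$
Invoking the theory of Kloosterman sheaves (Katz) together with Deligne's Riemann Hypothesis over finite fields, one shows that such an average tends to a fixed constant when the arguments pair off and exhibits power-saving cancellation otherwise. Summed over the dual variables and normalized by $1/\sqrt{L}$, the surviving paired configurations reproduce exactly the Wick/Isserlis combinatorics of a Gaussian, and the variance is read off from $\frac{1}{L}\sum_{m\le L}|c_m|^2$, which converges by Rankin--Selberg to the residue $c_f$ of $L(s,f\times f^*)$ at $s=1$, weighted by the $L^2$-norm of $w$. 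In the self-dual case $f^*=f$ the coefficients $\hat f_\infty(n)$ are real, so $E(X,p,a)$ is real-valued and the limit is a real Gaussian. In the non-self-dual case $E$ is genuinely complex: the Hermitian second moment $\setE[|E|^2]$ still converges to the same Rankin--Selberg constant, whereas the pseudo-second-moment $\setE[E^2]$, weighted by $\frac{1}{L}\sum_{m\le L}a_f(m)^2$, tends to $0$ because $L(s,f\times f)$ has no pole, yielding a circularly symmetric complex Gaussian.

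The main obstacle is the correlation-sum estimate carried out uniformly in $L$: one must simultaneously keep $L\to\infty$ (so the central limit theorem produces a Gaussian rather than a Sato--Tate limit) and control every off-diagonal (``resonance'') configuration with a genuine power saving in $p$, which is exactly what forces the balance $p^{d-\epsilon}\ll X=o(p^d)$. This rests on a precise determination of the geometric monodromy of the rank-$d$ hyper-Kloosterman sheaves and on ruling out coincidental multiplicative relations among the arguments $\pm am_i$; the even/odd hypothesis on $f$ enters only through the archimedean transform $\tilde w$ and the signs, affecting the explicit covariance but not the structure of the argument. A secondary technical point is justifying the truncation of the Voronoi dual sum and the passage to the limit with a uniformly negligible error throughout the admissible range of $X$.
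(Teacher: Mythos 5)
This statement is quoted from Kowalski--Ricotta \cite{KR}; the present paper does not prove it, but its overview of \cite{KR} describes exactly the argument you reconstruct: additive-character detection of the congruence, the $\mathrm{GL}_d$ Vorono\u\i\ formula producing rank-$d$ hyper-Kloosterman sums, moment computations via Katz's Kloosterman-sheaf monodromy and Deligne's Riemann Hypothesis giving Wick/Isserlis combinatorics, the Rankin--Selberg residue for the variance, and the self-dual versus non-self-dual dichotomy governed by the (non-)pole of $L(s,f\times f)$. Your proposal is therefore correct in outline and follows essentially the same route as the cited proof, including the correct reading of how the range $p^{d-\epsilon}\ll X=o(p^d)$ balances the length of the dual sum against the power saving in the off-diagonal correlation estimates.
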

 
In the case of Theorem \ref{theo-KR} for $d>2$, the method in \cite{LesYesh} no longer works but one conjectures that the smooth cut-off in the definition of $E(X,p,a)$ can still be removed.
 
  \vskip 0.5 cm \subsection{Statement of the main results}
In both \cite{FGKM} and \cite{KR} the authors compute an asymptotic expansion of the $\nu$-th moment of $E(X,q,\cdot)$ and show that the main term is equal to
$$ \delta_{2\mid\nu}\frac{\nu !}{2^{\nu/2}(\nu/2)!}V^{\nu/2}$$
for a certain $V>0$, which is the moment of a Gaussian distribution of variance $V$.

The purpose of this paper is to compute those moments in the case of half-integral weight modular forms with modulus $q=p^N$. As we will see, the behavior we uncover in this case differs significantly from what is shown in \cite{FGKM} and \cite{KR}.

Our first main contribution focuses on the moments of $E(X,q,\cdot)$. In the following statement, the exact dependency of the variance with respect to the initial parameters is omitted. This issue will be dealt with later. 

\begin{theo} 
\label{theo-moment}
Let $f$ be a Hecke eigenform of level 4 and of half-integral weight $\ell+1/2$. Define $E(X,q,a)$ as in \eqref{defE}.
Let $\nu$ be a positive integer and $e\in\{\pm1\}$. There exists an explicit constant $C_\nu$ depending only
on $\nu$ such that for any odd prime power $q=p^N$, any $X>0$ and any $\epsilon >0$ with 
$$1 \le Y^{(1+\epsilon)2^{\nu-2}}<C_\nu q$$
 where $Y=4q^2/X$, we have
 \begin{align}
 \frac{2}{\varphi(q)}\!\!\!\sum_{\tiny\begin{array}{c}a\:[q]\\ \legendre{a}=e\end{array}}\!\!\!E(X,q,a)^\nu &=
   \delta_{2\mid\nu}\frac{\nu !}{(\nu/2)!}\left(\frac{1}{Y}\sum_{\tiny{\begin{array}{c}1\le m <Y^{1+\epsilon}\\ \left(\frac{m}{p}\right)=e\end{array}}}
 \hat f_0(m)^2B^2\left(\frac{m}{Y}\right)\right)^{\nu/2}\\
 &\phantom{aaaaaaaaaaaaaaaaaaaaaaaaaaaa} + O\left(Y^{-1/3 + \epsilon}+\frac{Y^{\nu/2+\epsilon}}{p}\right)
 \nonumber
 \end{align}
%\begin{equation}
%\label{momentpair}
%%\frac{2}{\varphi(q)}\!\!\!\sum_{\tiny\begin{array}{c}a\:[q]\\ \legendre{a}=\pm 1\end{array}}\!\!\!E_{q}(a)^\nu
%\setE^\pm(E_{q}^\nu)=  \delta_{2\mid\nu}\frac{\nu !}{(\nu/2)!}\left(\frac{1}{Y}
% \sum_{\tiny{\begin{array}{c}1\le m <Y^{1+\epsilon}\\ \left(\frac{m}{p}\right)=\pm1\end{array}}}
% \hat f_0(m)^2B^2\left(\frac{m}{Y}\right)\right)^{\nu/2} + O\left(Y^{-1/3 +\epsilon}+\frac{Y^{\nu/2+\epsilon}}{p}\right)
% \end{equation}
where $\hat f_0(n)$ is the $n$-th normalized Fourier coefficient at the cusp $0$ and $B$ is a smooth rapidly decreasing function depending only on $w$ and $\ell$. 
\end{theo}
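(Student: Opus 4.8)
The plan is to follow the strategy of \cite{FGKM} and \cite{KR}, but with the crucial new feature that the relevant arithmetic object is a \emph{Salié sum} rather than a Gauss or Kloosterman sum. First I would detect the congruence by additive characters, writing
$$\mathbb{1}_{n\equiv a\,[q]}=\frac1q\sum_{h\,[q]}e\!\left(\frac{h(n-a)}{q}\right),$$
so that $S(X,q,a)$ becomes $\frac1q\sum_h e(-ha/q)$ times the additively twisted sum $\sum_n \hat f_\infty(n)e(hn/q)w(n/X)$. Opening each twist by the modular transformation on $\Gamma_0(4)$ that sends the cusp $\infty$ to the cusp attached to $h/q$ (using the Fricke involution $W_4$ relating $\infty$ and $0$) converts the twisted sum into a dual sum over the coefficients $\hat f_0(m)$ at the cusp $0$, weighted by a Bessel-type integral transform $B$ of $w$ whose order is governed by $\ell+1/2$, and multiplied by an arithmetic factor. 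Because the multiplier system of the half-integral weight form carries a quadratic character and an $\epsilon$-factor, summing this factor over $h$ produces a Salié sum modulo $q=p^N$. The rapid decay of $B$ truncates the $m$-sum at $m<Y^{1+\epsilon}$ with $Y=4q^2/X$, and one obtains a clean expression
$$E(X,q,a)=\frac{1}{\sqrt Y}\sum_{m\ge1}\hat f_0(m)\,\tT(a,m;q)\,B\!\left(\frac mY\right)+(\text{negligible}),$$
with $\tT$ a normalized Salié sum of size $O(1)$.

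The half-integral weight input that makes everything explicit is the evaluation of the Salié sum: for odd $c$ and $(n,c)=1$, in a suitable normalization,
$$S(m,n;c)=\epsilon_c\sqrt c\left(\frac nc\right)\!\!\sum_{y^2\equiv mn\,[c]}\!\! e\!\left(\frac{2y}{c}\right).$$
The point is that the normalized factor $\tT(a,m;q)$ is \emph{supported} on those $m$ for which $am$ is a square modulo $p$, i.e.\ on $m$ with $\legendre{m}=\legendre{a}=e$, and on that support it is a genuine oscillating exponential $\sum_{y^2\equiv am}e(2y/q)$. This dichotomy --- vanishing versus an honest exponential according to a quadratic residue condition --- is precisely the mechanism producing the \emph{mixed} Gaussian behaviour; it is also what forces the restriction $\legendre{m}=e$ in the main term and, as explained below, the combinatorial factor $\frac{\nu!}{(\nu/2)!}$ in place of the usual $\frac{\nu!}{2^{\nu/2}(\nu/2)!}$.

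For the moment I would raise the expression to the $\nu$-th power,
$$E(X,q,a)^\nu=Y^{-\nu/2}\sum_{m_1,\dots,m_\nu}\prod_i\hat f_0(m_i)B\!\left(\frac{m_i}{Y}\right)\prod_i\tT(a,m_i;q),$$
and average over $a$ with $\legendre{a}=e$. Inserting the explicit evaluation turns $\prod_i\tT$ into a character times an exponential in $a$, so the $a$-average becomes an essentially complete exponential sum modulo $q$. Its main contribution comes from the diagonal configurations in which the $m_i$ split into $\nu/2$ equal pairs (forcing $2\mid\nu$); each pair contributes $\frac1Y\sum_{\legendre{m}=e}\hat f_0(m)^2B^2(m/Y)$, and the $(\nu-1)!!$ pairings together with a factor $2$ per pair --- coming from the two solutions $\pm y$ of $y^2\equiv am$ surviving the average --- assemble into $\frac{\nu!}{(\nu/2)!}$, giving exactly the stated main term.

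The delicate part, and the main obstacle, is the error analysis, which has two sources matching the two terms $O\!\left(Y^{-1/3+\epsilon}+Y^{\nu/2+\epsilon}/p\right)$. The genuinely off-diagonal tuples carry square-root phases $e(2\sqrt{am_i}/q)$ that do not cancel in the $a$-average; estimating the resulting sums over the $m_i$ by van der Corput / stationary phase for the phase $\sqrt m$ yields a power saving that optimizes to the exponent $-1/3$. Separately, the degenerate near-diagonal tuples in which the pairing holds only modulo $p$ (the $m_i$ agreeing mod $p$ but not as integers, or the square roots $y_i$ colliding mod $p$) are handled trivially by counting, producing the term $Y^{\nu/2+\epsilon}/p$. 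The hard point is securing \emph{uniform} cancellation in the off-diagonal: one must iterate a differencing (Weyl / van der Corput, or Cauchy--Schwarz) step to reduce the $\nu$-fold sum to its diagonal plus a controlled remainder, and each such step squares the effective length of summation. After the roughly $\nu-2$ differencing steps the relevant length is $Y^{2^{\nu-2}}$, which must remain below the modulus $q$ for the completion and Weil-type bounds to deliver a gain; this is exactly the origin of the doubly-exponential hypothesis $Y^{(1+\epsilon)2^{\nu-2}}<C_\nu q$. Tracking the combinatorics of the near-diagonal terms and the uniformity of the exponential-sum estimates through this iteration is the technical heart of the proof.
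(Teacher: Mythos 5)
Your first half is essentially the paper's proof: congruence detection, Vorono\u{\i} summation through the Fricke involution sending $\infty$ to $0$, the explicit evaluation of Salié sums modulo $p^N$, the resulting dichotomy supported on $\legendre{m}=\legendre{a}=e$, and the diagonal combinatorics $(\nu-1)!!\cdot 2^{\nu/2}=\nu!/(\nu/2)!$ all match. The gap is in your error analysis, and it is fatal to the argument as proposed. Once you average over $a$ with $\legendre{a}=e$ (write $a=\mu b^2$ and sum over $b$ modulo $q$), the product $\prod_i{\rm Sa}_q(m_ia)$ expands into exponentials $e_q\bigl(b\sum_i e_i\sqrt{\mu m_i}^q\bigr)$ and the $b$-sum is a Ramanujan sum, so the average equals \emph{exactly}
\begin{equation*}
\frac{q}{\varphi(q)}\sum_{\mathbf{e}\in\{\pm1\}^\nu}\left(\delta_q\Bigl(\sum_i e_i\sqrt{\mu m_i}^q\Bigr)-\frac1p\,\delta_{q/p}\Bigl(\sum_i e_i\sqrt{\mu m_i}^{q/p}\Bigr)\right),
\end{equation*}
an indicator-type quantity with no residual oscillation. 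There are no ``square-root phases that fail to cancel'' left to estimate: the off-diagonal problem is a \emph{counting} problem for solutions of $\sum_i e_i\sqrt{m_i}^q\equiv0\:[q]$, where $\sqrt{m_i}^q$ are \emph{modular} square roots. These are violently discontinuous as functions of $m_i$, so van der Corput/stationary phase with phase $\sqrt{m}$, and iterated Weyl differencing, cannot even be formulated here; moreover, in the relevant regime the $m_i$ range over an interval of length $Y^{1+\epsilon}\ll q^{2^{2-\nu}}$, far too short for completion plus Weil-type bounds to give any saving, so that route is hopeless on two independent counts.

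What actually closes the gap --- and what your proposal is missing --- are two ideas. First, to control the modular relation one forms the integer $Q_\nu(m_1,\ldots,m_\nu)=\prod_{\mathbf{e}}\bigl(\sum_i e_i\sqrt{m_i}\bigr)$ over the sign patterns with $e_1=1$, a polynomial of degree $2^{\nu-2}$ in the $m_i$: any modular relation forces $q\mid Q_\nu(m_1,\ldots,m_\nu)$, and the hypothesis $Y^{(1+\epsilon)2^{\nu-2}}<C_\nu q$ (with $C_\nu=(2\nu^{2^{\nu-1}})^{-1}$) gives $|Q_\nu|<q/2$, hence $Q_\nu=0$, hence a genuine relation $\sum_i e_i\sqrt{m_i}=0$ in $\setR$. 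This product-over-conjugates trick (Lemma \ref{lemme-moment}), not iterated differencing, is the true origin of the doubly exponential condition. Second, the real relation is analysed by writing $m_i=r_i^2t_{\sigma(i)}$ and invoking the linear independence over $\setQ$ of square roots of distinct squarefree integers, which forces $\sum_{i\in\sigma^{-1}(j)}e_ir_i=0$ for each squarefree class $j$; counting the resulting configurations requires, besides Rankin--Selberg bounds on average, the estimate $|\hat f_{\mathfrak a}(t)|\ll_\epsilon t^{1/6+\epsilon}$ at squarefree $t$, which comes from Waldspurger's theorem and the Conrey--Iwaniec subconvex bound. That arithmetic input, unavailable through any analytic cancellation (there is no Deligne bound in half-integral weight), is the actual source of the exponent $-1/3$: a squarefree class of multiplicity three saves exactly $Y^{1/2-1/6}=Y^{1/3}$. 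Your description of the $Y^{\nu/2+\epsilon}/p$ term is acceptable in spirit (it is the trivial bound on the $\delta_{q/p}$ part above, together with the lower-level Vorono\u{\i} contributions), but the two central mechanisms of the proof are absent from your proposal and are not replaceable by the oscillation arguments you invoke.
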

 
 Actually we will prove a refined version of this theorem with more flexibility on the parameters (see Theorems \ref{theo-moment-impair} and \ref{theo-momentpair}). The fact that
  the remainder term is unbounded when $p$ is fixed and $Y\to+\infty$ is a first issue and we  will see that in this range, one can define from $S(X,q,a)$ an analogue of $E(X,q,a)$ which converges.   
 
On the contrary, if we want to have this error term negligible for any $\nu$, then $Y$ must be smaller than any power of $p$ so the Legendre symbol twisting the sum in the main term is a source of additional difficulty since controlling a short sum of large conductor is a very challenging problem. In fact, different behaviors can occur and we will explicitly highlight one of them in Proposition \ref{prop-Nx}. This matter will be discussed in detail in section 6 and we will deduce an analogue of Theorems \ref{theo-FGKM} and \ref{theo-KR} for  subsequences of $E(X,q,a)$.
 
  \begin{cor}
\label{cor-horizontal}
 Let $f$ be a Hecke eigenform of level 4 and of half-integral weight $\ell+1/2$. Assume that its Fourier coefficients are real. Define $E(X,q,a)$ as in \eqref{defE}. Let $(q_k)_{k\ge1}$ be any sequence of odd prime powers, say $q_k=p_k^{N_k}$, with $(p_k)_k$ the sequence of odd prime numbers. If $X_k$ is a function of $k$ satisfying
 \begin{equation}
 \label{conditions1}
 \frac{q_k^{2}}{X_k}=o(\log\log p_k) \text{\; and \;} X_k=o(q_k^2) \text{\;\;\;as $k\to+\infty$}
 \end{equation} 
 then there exists a subsequence of the random variables
 $$\begin{array}{ccc}
 \qfrac{\setZ}{q_k\setZ}^\times &\to&\setR\\
 a&\mapsto& E(X_k,q_k,a)
 \end{array}$$ 
 which converges in law to the mixed distribution
 $$\frac{1}{2}\delta_0 + \frac{1}{2}\N(0,2V_{f,w})$$
 where $\delta_0$ is the Dirac measure at $0$ and $\N(0,2V_{f,w})$ is a Gaussian distribution of mean $0$ and variance $2V_{f,w}=2\frac{(4\pi)^{\ell+1/2}}{\Gamma(\ell+1/2)}\langle f_0, f_0\rangle ||w||_2^2$ with $\langle f_0, f_0\rangle $ the
 square of the normalized Petersson norm of $f_0$.
 \end{cor}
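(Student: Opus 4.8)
The plan is to run the method of moments separately on the two sub-populations $\{a:\legendre{a}=+1\}$ and $\{a:\legendre{a}=-1\}$, which is exactly the decomposition that Theorem \ref{theo-moment} computes. Each class carries mass precisely $1/2$, since the reduction $\qfrac{\setZ}{q_k\setZ}^\times\to\qfrac{\setZ}{p_k\setZ}^\times$ has fibres of equal size and exactly half of $\qfrac{\setZ}{p_k\setZ}^\times$ consists of squares (the reality assumption on the coefficients guarantees that $E(X_k,q_k,a)$ is real, so the limit is a real law). Fix $\nu$ and a small $\epsilon$. Because $X_k=o(q_k^2)$ forces $Y_k=4q_k^2/X_k\to+\infty$ while $q_k^2/X_k=o(\log\log p_k)$ gives $Y_k=o(\log\log p_k)$, the admissibility condition $Y_k^{(1+\epsilon)2^{\nu-2}}<C_\nu q_k$ holds for all large $k$ (as $q_k\ge p_k$ and $Y_k$ grows slower than any power of $\log p_k$), and both error terms $Y_k^{-1/3+\epsilon}$ and $Y_k^{\nu/2+\epsilon}/p_k$ tend to $0$. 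Thus the conditional $\nu$-th moment on the class $\legendre{a}=e$ equals $\delta_{2\mid\nu}\frac{\nu!}{(\nu/2)!}\Sigma_e(k)^{\nu/2}+o(1)$, where $\Sigma_e(k)=\frac{1}{Y_k}\sum_{m<Y_k^{1+\epsilon},\,\legendre{m}=e}\hat f_0(m)^2B^2(m/Y_k)$.

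First I would show that the total variance converges. Since $Y_k=o(\log\log p_k)$ we have $Y_k^{1+\epsilon}<p_k$ for large $k$, so every $m$ in range is coprime to $p_k$ and $\legendre{m}\in\{\pm1\}$; hence $\Sigma_+(k)+\Sigma_-(k)=\frac{1}{Y_k}\sum_{1\le m<Y_k^{1+\epsilon}}\hat f_0(m)^2B^2(m/Y_k)$. By the Rankin--Selberg asymptotic $\sum_{m\le x}\hat f_0(m)^2\sim c_{f_0}x$, partial summation, and the rapid decay of $B$ (which makes the cutoff at $Y_k^{1+\epsilon}$ harmless and turns the sum into a Riemann sum for $c_{f_0}\int_0^\infty B^2$), this converges to $V_{f,w}=\frac{(4\pi)^{\ell+1/2}}{\Gamma(\ell+1/2)}\langle f_0,f_0\rangle\,\|w\|_2^2$ after identifying the explicit $B$ and the Rankin--Selberg residue. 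In particular $\Sigma_\pm(k)$ are bounded and $\Sigma_+(k)+\Sigma_-(k)\to V_{f,w}$, so it remains only to drive one of the two to $0$ along a subsequence.

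The decisive step is to extract a subsequence $(k_j)$ along which $\Sigma_-(k_j)\to0$ (equivalently $\Sigma_+(k_j)\to V_{f,w}$). As all terms are nonnegative, it suffices to arrange that every integer $m\le Y_{k_j}^{1+\epsilon}$ be a quadratic residue modulo $p_{k_j}$, which by multiplicativity amounts to requiring each prime $\ell\le Y_{k_j}^{1+\epsilon}$ to be a square mod $p_{k_j}$. By quadratic reciprocity each such requirement is a congruence condition on $p_{k_j}$ modulo $4\ell$ (or $8$), and the primes satisfying all of them simultaneously have density $2^{-\pi(Y_{k_j}^{1+\epsilon})}$. Here the hypothesis is used crucially: since $(p_k)_k$ exhausts the odd primes and $\pi(Y_k^{1+\epsilon})\le Y_k^{1+\epsilon}=o\big((\log\log p_k)^{1+\epsilon}\big)=o(\log p_k)$, a character-sum/large-sieve count of the primes $p\le P$ with this residue pattern is $\big(1+o(1)\big)2^{-\pi}\,\pi(P)\to+\infty$, so such indices exist for arbitrarily large thresholds and yield the desired subsequence with $\Sigma_-(k_j)=o(1)$.

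Finally I would conclude by the method of moments along $(k_j)$: on the class $\legendre{a}=+1$ the $\nu$-th moment tends to $\delta_{2\mid\nu}\frac{\nu!}{(\nu/2)!}V_{f,w}^{\nu/2}=\delta_{2\mid\nu}\frac{\nu!}{2^{\nu/2}(\nu/2)!}(2V_{f,w})^{\nu/2}$, which is exactly the $\nu$-th moment of $\N(0,2V_{f,w})$, while on the class $\legendre{a}=-1$ every moment of order $\ge1$ vanishes, forcing convergence to $\delta_0$. Both limit laws satisfy Carleman's condition, so moment convergence upgrades to convergence in law, and recombining the two classes with weight $1/2$ gives $\tfrac12\delta_0+\tfrac12\N(0,2V_{f,w})$. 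The hard part is precisely the third step: Theorem \ref{theo-moment} hands us the analytic moment asymptotics, but controlling the Legendre-twisted short sum $\Sigma_+-\Sigma_-$ of large conductor is genuinely delicate, and it is exactly the constraint $q_k^2/X_k=o(\log\log p_k)$ that keeps the number of quadratic-residue conditions $o(\log p_k)$ and thereby compatible with the distribution of primes in arithmetic progressions.
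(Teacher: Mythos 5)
Your overall strategy coincides with the paper's: split according to $\legendre{a}=\pm1$, feed in the moment asymptotics, compute the untwisted variance via Rankin--Selberg and Plancherel (the paper does this by Mellin inversion and the residue theorem in Proposition \ref{prop-variance}; your partial-summation variant is equivalent), and extract a subsequence of primes along which the minus-class variance vanishes. The gap lies in that decisive extraction step. You impose the quadratic-residue conditions on the whole range $m\le Y_k^{1+\epsilon}$ appearing in the moment formula. Under the sole hypothesis $Y_k=o(\log\log p_k)$, the quantity $Y_k^{1+\epsilon}$ need \emph{not} be $O(\log\log p_k)$ (take, e.g., $Y_k=\log\log p_k/\log\log\log p_k$), so the modulus of the congruence conditions produced by reciprocity, namely $8\prod_{\ell\le Y_k^{1+\epsilon}}\ell=\exp\bigl((1+o(1))Y_k^{1+\epsilon}\bigr)$, can exceed every fixed power of $\log p_k$. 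Unconditionally, asymptotic counts of primes in a prescribed residue pattern are available only for moduli $\ll(\log x)^A$ (Siegel--Walfisz, Theorem \ref{theo-SW}(1)); beyond that range a possible Siegel zero destroys your claimed count $(1+o(1))2^{-\pi}\pi(P)$, and neither the large sieve (upper bounds only) nor averaging over moduli can certify one specific pattern. Your guiding principle that ``$o(\log p_k)$ conditions are compatible with the distribution of primes in progressions'' is essentially the GRH-conditional statement (case (3) of Proposition \ref{prop-Nx}): what matters unconditionally is the \emph{size of the modulus}, which is exponential in the range of the conditions, not in their number.

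The missing idea --- which you in fact have in hand, but deploy in the wrong place --- is to truncate \emph{before} imposing the residue conditions. Since all terms are nonnegative and $B$ decays rapidly, \eqref{YZ} shows that the portion $\log\log p_k\le m<Y_k^{1+\epsilon}$ of either twisted sum contributes $O\bigl((Y_k/\log\log p_k)^A\bigr)=o(1)$, using exactly the hypothesis $q_k^2/X_k=o(\log\log p_k)$. Hence one only needs every $m\le Z(p_k):=\log\log p_k$ to be a quadratic residue modulo $p_k$, i.e. condition \eqref{condileg}; the resulting moduli are $\exp\bigl((1+o(1))\log\log p_k\bigr)=(\log p_k)^{1+o(1)}$, squarely inside the Siegel--Walfisz range, and Proposition \ref{prop-Nx}(1) then yields infinitely many admissible primes. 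The hypothesis on $q_k^2/X_k$ is calibrated precisely to allow a cutoff $Z$ with $Y_k=o(Z)$ and $Z\ll\log\log p_k$. With this correction, the rest of your argument --- the mixture decomposition with weights $1/2$, the identification of the limiting moments, and the moment-determinacy conclusion --- goes through as in the paper.
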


\begin{rmq}\
\begin{itemize}
\item[$\bullet$] The growth condition \eqref{conditions1} can be improved to a larger range if we assume some classical conjectures. For example, under GRH it is enough to assume that $q_k^2/X_k = o(\log p_k).$\\

\item[$\bullet$] The assumption that the Fourier coefficients of $f$ are real is crucial here. However, it is a classical assumption that is made in many recent papers on this topic, notably the study of sign changes in the sequence $(\hat f_\infty(n))_{n\ge1}$ (see \cite{BruiKoh}, \cite{Hulse} or \cite{sign}). Examples of such forms are given in \cite[page 7]{BruiKoh} and \cite[page 109]{Cipra}.\\
\end{itemize}
\end{rmq}
This result is quite surprising when comparing it to the integral weight case. One may wonder why the convergence only holds for a subsequence of prime numbers $p$. We will see how this is a natural restriction using the  method of moments.

The appearence  of two distinct distributions in Corollary \ref{cor-horizontal} comes from the asymptotic mixed distribution of Salié sums and the difference of behavior when $a$ is a square modulo $q$ or not. That is why we decompose the moments of the random variables as the average of a moment over squares and a moment over non-squares modulo $q$.\\

In the last section, we will see that the proof of Theorem \ref{theo-moment} can be adapted to deal with the case of Fourier coefficients of integral weight modular forms in arithmetic progressions of modulus $q=p^N$ with $p$ an odd prime and $N>1$. We will actually give a detailed sketch of the proof of the following theorem.

\begin{theo} 
\label{theo-moment2intro}
Let $f$ be a Hecke eigenform of level 1 and of even weight $\kappa$. Define $E(X,q,a)$ as in \eqref{defE}.
Let $\nu$ be a positive integer and $e\in\{\pm1\}$. There exists an explicit constant $C_\nu$ depending only
on $\nu$ such that for any odd prime power $q=p^N$ with $N\ge2$, any $X>0$ and any $\epsilon >0$ with 
$$1 \le Y^{(1+\epsilon)2^{\nu-2}}<C_\nu q$$
 where $Y=q^2/X$, we have
 \begin{align}
 \frac{2}{\varphi(q)}\!\!\!\sum_{\tiny\begin{array}{c}a\:[q]\\ \legendre{a}=e\end{array}}\!\!\!E(X,q,a)^\nu &=
   \delta_{2\mid\nu}\frac{\nu !}{(\nu/2)!}\left(\frac{1}{Y}\sum_{\tiny{\begin{array}{c}1\le m <Y^{1+\epsilon}\\ \left(\frac{m}{p}\right)=e\end{array}}}
 \hat f_\infty(m)^2B^2\left(\frac{m}{Y}\right)\right)^{\nu/2}\\
 &\phantom{aaaaaaaaaaaaaaaaaaaaaaaaaaaa} + O\left(Y^{-1/2 + \epsilon}+\frac{Y^{\nu/2+\epsilon}}{p}\right)
 \nonumber
 \end{align}
%\begin{equation}
%\label{momentpair}
%%\frac{2}{\varphi(q)}\!\!\!\sum_{\tiny\begin{array}{c}a\:[q]\\ \legendre{a}=\pm 1\end{array}}\!\!\!E_{q}(a)^\nu
%\setE^\pm(E_{q}^\nu)=  \delta_{2\mid\nu}\frac{\nu !}{(\nu/2)!}\left(\frac{1}{Y}
% \sum_{\tiny{\begin{array}{c}1\le m <Y^{1+\epsilon}\\ \left(\frac{m}{p}\right)=\pm1\end{array}}}
% \hat f_0(m)^2B^2\left(\frac{m}{Y}\right)\right)^{\nu/2} + O\left(Y^{-1/3 +\epsilon}+\frac{Y^{\nu/2+\epsilon}}{p}\right)
% \end{equation}
where $B$ is a smooth rapidly decreasing function depending only on $w$ and $k$. 
\end{theo}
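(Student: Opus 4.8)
The plan is to run the method of moments exactly as in the proof of the half-integral weight Theorem~\ref{theo-moment}, with the role played there by Salié sums now taken over by the explicit evaluation of Kloosterman sums to prime power modulus. The starting point is a Voronoi summation formula. Since $f$ has level $1$ there is a single cusp, so detecting the congruence $n\equiv a\,[q]$ by additive characters and applying Voronoi to each twisted sum $\sum_n \hat f_\infty(n)\,e(hn/q)\,w(n/X)$ returns a dual sum in the \emph{same} coefficients $\hat f_\infty(m)$; this is why $\hat f_\infty$, rather than $\hat f_0$, appears in the main term, in contrast with the level $4$ situation. After separating the frequencies $h$ according to $\gcd(h,q)=p^j$, retaining the primitive ones, and truncating the $m$-sum at $m<Y^{1+\epsilon}$ using the rapid decay of the Hankel-type transform $B$ of $w$, one obtains up to a negligible error
\[
E(X,q,a)=\frac{1}{\sqrt Y}\sum_{1\le m<Y^{1+\epsilon}}\hat f_\infty(m)\,B\!\left(\frac mY\right)\frac{S(a,m;q)}{\sqrt q}+O(\cdots),
\]
with $Y=q^2/X$ and $S(a,m;q)$ the classical Kloosterman sum.

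The key arithmetic input, and the reason for the hypothesis $N\ge2$, is the explicit evaluation of $S(a,m;p^N)$ for $N\ge 2$. Lifting the stationary points of $ax+m\bar x$ by Hensel's lemma shows that, for $\gcd(am,p)=1$, the normalized sum $S(a,m;q)/\sqrt q$ is a \emph{Salié-type} expression: it is supported precisely on the condition that $am$ be a quadratic residue modulo $p$, with main term a combination of $\legendre{\cdot}\,e(\pm 2\sqrt{am}/q)$ over the two square roots of $am$ modulo $q$. In particular it vanishes unless $\legendre{m}=\legendre{a}=e$, which is exactly why the constraint $\legendre{m}=e$ survives in the main term once the $a$-average is restricted to $\legendre{a}=e$. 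This is the integral weight counterpart of the intrinsic appearance of Salié sums in the half-integral case; for $N=1$ no such closed form exists, and one is in the genuinely different regime of \cite{FGKM}.

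I would then expand the $\nu$-th power into $\nu$-tuples $(m_1,\dots,m_\nu)$ weighted by $Y^{-\nu/2}\prod_i \hat f_\infty(m_i)B(m_i/Y)$, and average the product $\prod_i S(a,m_i;q)/\sqrt q$ over $a$ with $\legendre{a}=e$. Inserting the explicit phases turns this into a character-twisted exponential sum in $a$ with phase $\tfrac{2}{q}\sum_i\epsilon_i\sqrt{am_i}$ over sign choices $\epsilon_i\in\{\pm1\}$; orthogonality isolates the diagonal $\sum_i\epsilon_i\sqrt{m_i}\equiv 0\,[q]$, which in the admissible range forces the $m_i$ to pair off with opposite signs. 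Counting the $(\nu-1)!!=\nu!/(2^{\nu/2}(\nu/2)!)$ pairings together with the $2^{\nu/2}$ sign freedoms yields the factor $\nu!/(\nu/2)!$ and the stated main term, the parity $\delta_{2\mid\nu}$ being forced since an odd number of square roots cannot cancel. The off-diagonal contribution produces the error $Y^{-1/2+\epsilon}$, while the degenerate frequencies $\gcd(h,q)>1$ together with the terms $p\mid m$ produce $Y^{\nu/2+\epsilon}/p$; the hypothesis $Y^{(1+\epsilon)2^{\nu-2}}<C_\nu q$ keeps the total length of the iterated $m$-sum below the modulus so that completion and square-root cancellation estimates apply.

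The main obstacle is precisely this off-diagonal estimate: bounding the exponential sum $\sum_{a}e\!\left(\tfrac{2}{q}\sum_i\epsilon_i\sqrt{am_i}\right)$, twisted by the quadratic character coming from the Salié evaluation, when the $m_i$ do not pair perfectly. The difficulty is that the phase involves square roots living on a double cover of $(\setZ/q\setZ)^\times$, so one must track the sign choices $\epsilon_i$ and the arithmetic of $\sqrt{m_i}\bmod p^N$, and carefully separate the honest diagonal from ``fake'' coincidences in which distinct tuples produce the same phase through congruences among the $\sqrt{m_i}$ modulo $p$. Establishing genuine square-root cancellation here with the required uniformity is the technical heart of the argument; because the relevant phase is a clean quadratic, with no extra theta or Gauss factor of the kind present in the half-integral weight analysis, one gains the stronger error $Y^{-1/2+\epsilon}$ rather than $Y^{-1/3+\epsilon}$.
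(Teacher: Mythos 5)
Your skeleton matches the paper's strategy — level-one Voronoi giving a dual sum in $\hat f_\infty$ against normalized Kloosterman sums, explicit evaluation of $S(a,m;p^N)$ for $N\ge2$, moment expansion, diagonal extraction — but the step you single out as the ``technical heart'' rests on a structural misunderstanding, and what you would need there neither exists nor is needed. Once the $a$-average is restricted to $\legendre{a}=e$, write $a=\mu b^2$ with $b$ running over invertible residues: then ${\sqrt{am_i}}^q\equiv\pm b\,{\sqrt{\mu m_i}}^q\ [q]$, so the phase $\tfrac2q\sum_i\epsilon_i{\sqrt{am_i}}^q$ is \emph{linear} in $b$, and the $a$-average is computed \emph{exactly}: it is a Ramanujan sum when $N$ is even (the analogue of Lemma \ref{lemme-momentsalie}), and a Gauss sum $G\bigl(\bigl(\tfrac{\cdot}{q}\bigr)^{\nu},\ \sum_i\epsilon_i{\sqrt{m_i}}^q\bigr)$ when $N$ is odd, because for odd $N$ the explicit Kloosterman evaluation of Proposition \ref{prop-Salie} carries a Legendre factor $\legendre{x}$ that you dropped. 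No ``square-root cancellation'' estimate over a double cover is required or available. Your remark that the integral-weight case has ``no extra theta or Gauss factor'' is exactly backwards — it has \emph{more} character structure than the half-integral case — and your proposal is silent on the resulting case analysis that the paper must carry out for odd $N$: for odd $\nu$ the character $(\tfrac{\cdot}{q})^\nu$ is non-principal and the Gauss sum is bounded by $q^{-1/2}\le 1/p$ (this is where $N\ge2$ is used), while for even $\nu$ it is principal and one returns to Ramanujan-type sums with residual Legendre factors, split further according to $p\bmod 4$.

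The diagonal analysis and the error term are also misattributed. After exact orthogonality, the range hypothesis $Y^{(1+\epsilon)2^{\nu-2}}<C_\nu q$ converts the congruence $\sum_i\epsilon_i{\sqrt{m_i}}^q\equiv0\,[q]$ into the real equation $\sum_i\epsilon_i\sqrt{m_i}=0$ via the integer polynomial $Q_\nu$ of Lemma \ref{lemme-moment}; but this equation does \emph{not} force the $m_i$ to pair off, and an odd number of square roots \emph{can} cancel: $\sqrt8-\sqrt2-\sqrt2=0$. One must write $m_i=r_i^2t_{\sigma(i)}$, invoke the linear independence over $\setQ$ of square roots of distinct squarefree integers, and count the unpaired solutions; these are precisely what produce the error $O(Y^{-1/2+\epsilon})$ (and they make the odd moments $O(Y^{-1/2+\epsilon})$ rather than identically diagonal-free, as in the analogues of Propositions \ref{prop-sigmasigma} and \ref{prop-sigmapair}). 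Finally, the improvement from $Y^{-1/3+\epsilon}$ (half-integral case) to $Y^{-1/2+\epsilon}$ here has nothing to do with the phase being ``a clean quadratic'': it comes solely from Deligne's bound $|a(n)|\ll_\epsilon n^\epsilon$ replacing the Waldspurger--Conrey--Iwaniec bound $n^{1/6+\epsilon}$, which is all that is available for half-integral weight coefficients and is what limits the off-diagonal count there.
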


As before, we will deduce the following corollary.

 \begin{cor}
\label{cor-horizontal2}
 Let $f$ be a Hecke eigenform of level 1 and of even weight $\kappa$. Assume that its Fourier coefficients are real. Define $E(X,q,a)$ as in \eqref{defE}. Let $(q_k)_{k\ge1}$ be any sequence of odd prime powers, say $q_k=p_k^{N_k}$, with $(p_k)_k$ the sequence of odd
 prime numbers and $N_k\ge2$. If $X_k$ is a function of $k$ satisfying
 \begin{equation}
 \label{conditions2}
 \frac{q_k^{2}}{X_k}=o(\log\log p_k) \text{\; and \;} X_k=o(q_k^2) \text{\;\;\;as $k\to+\infty$}
 \end{equation} 
 then there exists a subsequence of the random variables
 $$\begin{array}{ccc}
 \qfrac{\setZ}{q_k\setZ}^\times &\to&\setR\\
 a&\mapsto& E(X_k,q_k,a)
 \end{array}$$ 
 which converges in law to the mixed distribution
 $$\frac{1}{2}\delta_0 + \frac{1}{2}\N(0,2V_{f,w})$$
 where $\delta_0$ is the Dirac measure at $0$ and $\N(0,2V_{f,w})$ is a Gaussian distribution of mean $0$ and variance $2V_{f,w}=2\frac{(4\pi)^{\kappa}}{\Gamma(\kappa)}\langle f, f\rangle ||w||_2^2$ with $\langle f, f\rangle $ the 
 square of the normalized Petersson norm of $f$.
 \end{cor}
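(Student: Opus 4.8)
The plan is to establish Corollary \ref{cor-horizontal2} by the method of moments, following \emph{mutatis mutandis} the derivation of Corollary \ref{cor-horizontal} from Theorem \ref{theo-moment}, but feeding in Theorem \ref{theo-moment2intro} in place of Theorem \ref{theo-moment}. Writing $Y_k=q_k^2/X_k$ and
\[
T_e(Y_k)=\frac{1}{Y_k}\sum_{\substack{1\le m<Y_k^{1+\epsilon}\\ \left(\frac{m}{p_k}\right)=e}}\hat f_\infty(m)^2\,B^2\!\left(\frac{m}{Y_k}\right),
\]
Theorem \ref{theo-moment2intro} identifies, for each fixed $\nu$, the $\nu$-th moment of $E(X_k,q_k,\cdot)$ restricted to the residues $a$ with $\left(\frac{a}{p_k}\right)=e$ as $\delta_{2\mid\nu}\frac{\nu!}{(\nu/2)!}T_e(Y_k)^{\nu/2}$ up to the stated error. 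Since the moment over all of $\qfrac{\setZ}{q_k\setZ}^\times$ is the average $\tfrac12(\text{moment over squares})+\tfrac12(\text{moment over non-squares})$, everything reduces to the limiting behaviour of $T_{+1}(Y_k)$, $T_{-1}(Y_k)$ and of the error terms.

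First I would note that $T_{+1}(Y)+T_{-1}(Y)$ is the untwisted sum $T(Y)=\frac1Y\sum_{m<Y^{1+\epsilon}}\hat f_\infty(m)^2B^2(m/Y)$, and that by the Rankin--Selberg asymptotic $\sum_{m\le t}\hat f_\infty(m)^2\sim c_f t$ together with partial summation one has $T(Y)\to V_{f,w}$ as $Y\to\infty$, the value of the limit being fixed by the explicit relation between $B$ and $w$ exactly as in the half-integral case. The decisive question is how this total mass is split between squares and non-squares. The key step is to extract a subsequence of primes along which \emph{every} integer $m<Y_k^{1+\epsilon}$ is a quadratic residue modulo $p_k$, equivalently along which the least quadratic non-residue $n_{p_k}$ exceeds $Y_k^{1+\epsilon}$; because quadratic residuosity is multiplicative it is enough that all primes below $Y_k^{1+\epsilon}$ be residues. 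The hypothesis \eqref{conditions2} forces $Y_k=o(\log\log p_k)$, hence $Y_k^{1+\epsilon}=o\big((\log\log p_k)^{1+\epsilon}\big)$, and the classical lower bounds on the least quadratic non-residue valid for infinitely many primes---Graham--Ringrose unconditionally, and Montgomery's $n_p\gg\log p\,\log\log p$ under GRH---produce infinitely many $p_k$ with $n_{p_k}$ comfortably larger than $Y_k^{1+\epsilon}$ (the GRH bound being what enlarges the admissible range as in the remark after Corollary \ref{cor-horizontal}). Along such a subsequence $T_{-1}(Y_k)=0$ identically, so $T_{+1}(Y_k)=T(Y_k)\to V_{f,w}$.

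Next I would verify that the error terms in Theorem \ref{theo-moment2intro} are negligible along this subsequence for every fixed $\nu$: the condition $X_k=o(q_k^2)$ gives $Y_k\to+\infty$, so $Y_k^{-1/2+\epsilon}\to0$; the condition $Y_k=o(\log\log p_k)$ with $p_k\to+\infty$ gives $Y_k^{\nu/2+\epsilon}/p_k\to0$; and the admissibility constraint $Y_k^{(1+\epsilon)2^{\nu-2}}<C_\nu q_k$ holds for all large $k$ since $Y_k$ is bounded by a fixed power of $\log\log p_k$ while $q_k\ge p_k^2$. Consequently the moment over squares tends to $\delta_{2\mid\nu}\frac{\nu!}{(\nu/2)!}V_{f,w}^{\nu/2}$, which is exactly the $\nu$-th moment of $\N(0,2V_{f,w})$, whereas the moment over non-squares tends to $0$, the $\nu$-th moment of $\delta_0$ for $\nu\ge1$. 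Averaging the two, the $\nu$-th moment of $E(X_k,q_k,\cdot)$ over $\qfrac{\setZ}{q_k\setZ}^\times$ converges to $\tfrac12\cdot 0+\tfrac12\cdot\frac{\nu!}{(\nu/2)!}V_{f,w}^{\nu/2}$ for $\nu\ge1$, which is precisely the $\nu$-th moment of the mixed law $\tfrac12\delta_0+\tfrac12\N(0,2V_{f,w})$.

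Finally, since that mixed law is dominated by a Gaussian its moments satisfy Carleman's condition and it is therefore determined by its moment sequence; the method of moments then upgrades the convergence of all moments to convergence in law along the subsequence, which finishes the proof. Here the reality of the Fourier coefficients is what guarantees that the limiting law is the real Gaussian $\N(0,2V_{f,w})$ rather than a complex one and that the odd moments vanish cleanly. I expect the main obstacle to be precisely the subsequence extraction, that is the control of the short, large-conductor character sum $T_{-1}(Y_k)$ flagged as a ``short sum of large conductor'' after Theorem \ref{theo-moment}: it is here that one is forced to invoke the existence of primes with anomalously large least quadratic non-residue, and hence to pass to a subsequence rather than obtaining convergence for the whole sequence---other subsequential limits being possible, as illustrated by Proposition \ref{prop-Nx}.
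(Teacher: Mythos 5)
Your proposal is correct, and structurally it is the paper's own proof: the paper disposes of this corollary by rerunning Section \ref{section6} (the proof of Corollary \ref{cor-horizontal}) with Theorem \ref{theo-moment2} in place of the half-integral weight moment theorems — method of moments, the splitting $\setE=\frac12\setE^++\frac12\setE^-$, a subsequence of primes along which the non-residue sum is empty, Rankin--Selberg plus the Plancherel identity of Lemma \ref{lemme-plancherel} for the variance, and moment-determinacy of the mixed law. The one genuine difference is the arithmetic input behind the subsequence extraction. The paper proves its own Proposition \ref{prop-Nx} via Siegel--Walfisz, which only furnishes infinitely many primes $p$ for which all $m\le\log\log p$ are quadratic residues; since $Y_k^{1+\epsilon}$ may exceed $\log\log p_k$ (only $Y_k=o(\log\log p_k)$ is assumed), the paper must first truncate the variance sum at $Z(p_k)=\log\log p_k$ using the rapid decay of $B$, i.e.\ estimate \eqref{YZ}, before imposing residuosity. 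You instead impose residuosity on the whole summation range $m<Y_k^{1+\epsilon}$ and justify it by citing the known lower bounds for the least quadratic non-residue (Fridlender--Salié/Graham--Ringrose unconditionally, Montgomery under GRH); these give infinitely many $p$ with $n_p\gg\log p$, far larger than $(\log\log p)^{1+\epsilon}$, and since $(p_k)$ enumerates all odd primes this does produce the required subsequence with no need for the truncation step. So your argument trades the paper's self-contained (and much weaker) Proposition \ref{prop-Nx} for deeper off-the-shelf theorems; it is shorter and makes transparent that the whole phenomenon rests on primes with abnormally large least non-residue, whereas the paper's route keeps the prerequisites minimal. Two small points to tighten: the identity $T_{+1}+T_{-1}=T$ uses that no multiple of $p_k$ lies below $Y_k^{1+\epsilon}$ (true for large $k$ since $Y_k^{1+\epsilon}<p_k$); and the reality of the Fourier coefficients is needed to write $a(m)^2=|a(m)|^2$ when identifying the variance via Rankin--Selberg, not for the vanishing of odd moments, which Theorem \ref{theo-moment2intro} gives regardless.
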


\vskip 0.5 cm \subsection{Overview of the proof of Theorem \ref{theo-moment}}

Classically, we first show by detecting the congruence and applying a Vorono\u\i\; formula, that $E(X,q,a)$ is roughly equal to
\begin{equation}
\label{tordu}
\frac{1}{\sqrt Y}\sum_{ m\ge 1}\hat f_0(m){\rm Sa}_{q}(ma)B\left(\frac{m}{Y}\right)
\end{equation} 
where $Y=4q^2/X$, $B$ is a smooth rapidly decreasing function depending on $f$ and $w$, $\hat f_0(n)$ are the  normalized Fourier coefficients at the cusp 0 and 
$$\text{Sa}_q(x) =\left\{\begin{array}{cl} 
2 \cos\left(\frac{2\pi y}{q}\right) & \text{ if $\legendre{x}=1$ and where $y^2=x\:[q]$}\\
0 & \text{ otherwise}
\end{array}\right.$$
which is essentially the value of a normalized Salié sum as we will see in section 4.\\

Therefore, computing the moments of $E(X,q,\cdot)$ involves the computation of moments of Salié sums and here lies the main difference with the proofs of Theorem \ref{theo-FGKM} and \ref{theo-KR}. Indeed, in \cite{FGKM} and \cite{KR} the Salié sums are replaced by Kloosterman sums. The authors then appeal to the theory of trace functions and to the fact that the monodromy 
groups attached to the products of Kloosterman sums are pairwise independent and independent of $p$. 

This no longer holds in our case since on one hand, $q$ is not necessarly a prime $p$ and on the other hand, even if one assumes $q=p$, the monodromy group of a Salié sum is a dihedral group of order $2p$ (which depends on $p$) and there is no independence of these groups when multiplicative shifts occur. Thus, as we will see, the moments of Salié sums cannot 
converge.  Yet we will get around this problem.\\

\vskip 0.5 cm \subsection{Notations}
In the whole paper, we will use $a\:[q]$ for $a$ modulo $q$ and $e_q(x)=e(\frac{2i\pi x}{q})=e^{\frac{2i\pi x}{q}}$ for any real number $x$.  For integers $a$ and $b$, put $\llbracket a, b\rrbracket=[a,b]\cap\setZ$.
If $u$ is an integer coprime to $q$ then $\bar u$ is an integer such that $u\bar u=1[q]$ and if $q$ is an odd prime power and $u$ is an invertible square modulo $q$ then we denote by $\sqrt{u}^q$ its squareroot modulo $q$ in $\llbracket1,(q-1)/2\rrbracket$.

The symbols ${\sum}^\times$ and ${\sum\:}^\Box$ mean that we are restricting the summation respectively to invertible classes and invertible squares modulo $q$.

If $\M$ is a function defined on $\setZ/q\setZ^\times$ then we will denote by $\setE, \setE^+$ and $\setE^-$  the expected values respectively over the invertible classes, the invertible squares and invertible non-squares  modulo $q$. Precisely
$$\setE(\M)=\frac{1}{\varphi(q)}{\sum_{a\:[q]}}^\times \M(a)\text{ \;\;and \;\;} \setE^\pm(\M)=\frac{2}{\varphi(q)}\underset{\tiny\begin{array}{c} a\:[q]\\ \legendre{a}=\pm1\end{array}}{{\sum}^\times} \M(a)$$
where $\varphi$ is  Euler's totient function.

Let $\delta_q$ be the Dirac function at $0$ modulo $q$ \text{i.e.} for any integer $x$, one has 
$$\delta_q(x)=\left\{\begin{array}{cc}1 & \text{ if } x=0\:[q]\\ 0 & \text{otherwise.}\end{array}\right.$$
 We also write $\delta_0$ for the classical Dirac function at $0$.

For brevity, we will not keep track of the dependency on $X$ in the sums $S$ and $E$ and take the following new notations.
\begin{align}
\label{newS}
S_q := S(X,q,\cdot),\\
E_q := E(X,q,\cdot).
\label{newE}
\end{align} 

Classically, we let $$\calH=\{z\in\setC \:|\: {\rm Im\:} z > 0\}$$ and
$$\Gamma_0(N)=\left\{\Matrix{a&b\\c&d}\in\text{SL}_2(\setZ) \;\;\vline\;\; c=0\:[N]\right\}$$
for any positive integer $N$.\\

Finally, for any finite set $A$ we denote by $|A|$ its cardinality.

\vskip 0.5 cm \subsection{Acknowledgements}

The author would like to express his deepest thanks and appreciation to Florent Jouve and Guillaume Ricotta for their many words of advice which have made this paper a lot better. Also, the author would like to thank the anonymous referees for suggesting relevant  
modifications and helpful comments on a preliminary version of this work.

\vskip 1cm\section{Quick review of modular forms of half integral weight}

We recall some basic facts about the half-integral weight case, see for example \cite{Ono} for more details. 

By convention we denote by $\sqrt z$ the square root of the complex number $z$ with argument in $(-\pi/2,\pi/2]$ and  $\left(\frac{c}{d}\right)$ the Kronecker symbol for any interger $c$ and $d$ (see \cite[page 11]{Ono}) with $\left(\frac{\cdot}{2}\right)$ being the principal character modulo 2. We also define for any odd integer $d$
$$\epsilon_d = \left\{\begin{array}{cc}1 &  \text{ if } d=1\:[4], \\ i &  \text{ if } d=3\:[4]. \end{array}\right.$$

Let $\ell$ be an integer. A modular form of weight $\ell+1/2$ for $ \Gamma_0(4)$ is a holomorphic function $f : \calH \to \setC$ defined on the upper half-plane such that : 
\begin{enumerate}
\item One has $$f(\gamma z) = \epsilon_d^{-(2\ell+1)}\left(\frac{c}{d}\right)^{2\ell+1}(cz+d)^{\ell+1/2}f(z)$$
for any $\gamma = \Matrix{a&b\\c&d}\in \Gamma_0(4)$ which acts on $z\in\calH$ by $\gamma z = \frac{az+b}{cz+d}$.
\item The function $f$ is holomorphic at every cusp which here means that for any $z\in\calH$
\begin{align}
 &f_\infty(z) := f(z) = \sum_{n\ge 0} \hat f_\infty(n)n^{\ell/2-1/4}e(nz)\\
&f_0(z) := (-2iz)^{-(\ell+1/2)}f(-\frac{1}{4z}) =  \sum_{n\ge 0}\hat f_0(n)n^{\ell/2-1/4}e(nz)\\
&f_{-\frac{1}{2}}(z) := (-4z+\frac{1}{2})^{-(\ell+1/2)}f\left(\frac{2z}{-4z+\frac{1}{2}}\right) =  \sum_{n\ge 0} \hat f_{-\frac{1}{2}}(n)n^{\ell/2-1/4}e(nz).
\end{align}
\end{enumerate} 

So the $\hat f_\mathfrak a(n)$ are the normalized Fourier coefficients of $f$ at each of the three inequivalent cusps $\mathfrak a\in\{\infty, 0, -\frac{1}{2}\}$. If moreover,
\begin{enumerate}[resume]
\item $\hat f_\mathfrak a(0)=0$ for all $\mathfrak a\in\{\infty, 0, -\frac{1}{2}\}$ 
\end{enumerate} 
then $f$ is said to be cuspidal and the space of such forms is denoted by $\calS_{\ell+1/2}$. Since $\calS_{\ell+1/2}=\{0\}$ for $\ell\le 3$ we may suppose in the sequel that $\ell \ge 4$. 

\begin{rmq}\
\vskip 0.3cm
\begin{itemize}
\item[$\bullet$] Conditions (2.1), (2.2) and (2.3) may not be standard conventions but it appears there are several of them. See for example the definitions given in \cite{Hulse}, \cite{sign} or \cite{sign2}. 
\item[$\bullet$] The function $f_0$ is the image of $f$ by the Fricke involution for the group $\Gamma_0(4)$. Note also that $f_0\in\calS_{\ell+1/2}$ and if the coefficients $\hat f_\infty(n)$ are all real then so are the $\hat f_0(n)$.  
\end{itemize}
\end{rmq}

The Hecke operators on $\calS_{\ell+1/2}$  are non-zero only for square integers. If   $f\in\calS_{\ell+1/2}$ then
$$T_{p^2}f(z) = \sum_{n\ge 1}(p^{\ell-1/2}\hat f_\infty(p^2n)+ \chi_n(p)p^{\ell-1}\hat f_\infty(n) +\left(\frac{p}{2}\right)p^{\ell-1/2}\hat f_\infty(n/p^2))n^{\ell/2-1/4}e(nz)$$
with $p$ a prime number and $$\chi_t(d) =\left(\frac{d}{2}\right) \left(\frac{(-1)^\ell t}{d}\right).$$
 By convention, $\hat f_\infty(n/p^2)=0$ if $p^2\nmid n$. Also,
$$T_{m^2n^2}=T_{m^2}T_{n^2} $$ for any coprime odd integers $m$ and $n$.

We say that $f$ is an eigenform if it is an eigenvector for \emph{all} the Hecke operators. 

It follows from the Waldspurger formula (\cite{Wald}) and the bound for central values of 
automorphic $L$-functions (\cite{conrey}) that for any $f\in\calS_{\ell+1/2}$ and any squarefree integer $t$,
\begin{equation}
\label{boundsqfr}
|\hat f_\infty(t)|\ll_{f,\epsilon} t^{\alpha +\epsilon}
\end{equation}
with $\alpha=1/6$ and conjecturally $\alpha=0$ (see the begining of \cite{conrey}). 

If $f$ is an eigenform then it follows from \cite[Lemma 3.3]{squarefree} that its Fourier coefficients satisfy   
\begin{equation}
\label{bound}
|\hat f_\mathfrak{a}(n)|\ll_{f,\epsilon} n^{\alpha +\epsilon}
\end{equation}
for any $\epsilon>0$, any $\mathfrak{a}\in\{\infty,0,-\frac{1}{2}\}$ and any positive integer $n$.

\vskip 1cm
In order to apply a Vorono\u\i\;type formula, we will need a functional equation for the $L$-function of $f$ twisted by an additive character.  Such an equation is obtained in \cite[Lemma 4.3]{Hulse}. Let us recall first the classical functional equation for the $L$-function attached to $f$. For any $s\in\setC$,

$$\Lambda(s,f) := 2^s\Gamma_\ell(s)\sum_{n\ge1}\hat f_\infty(n)n^{-s} = \Lambda(1-s,f_0) $$
where $\Gamma_\ell(s):=(2\pi)^{-(s+\ell/2-1/4)}\Gamma(s+\ell/2-1/4)$.

\begin{lemme}[\cite{Hulse}]
\label{lemme-eqfonc}
Let $f\in\calS_{\ell+1/2}$. Let $u$ and $q$ be two integers with $q$ odd and $(u,q)=1$. Put $$L(s,f,u/q)=\sum_{n\ge1}\hat f_\infty(n)e_q(un)n^{-s}$$
then $L(s,f,u/q)$ is absolutely convergent for {\rm Re} $s>1$ and extends to an entire function sastifying the functional equation 
$$(2q)^s\Gamma_\ell(s)L(s,f,u/q)=\epsilon_q^{-(2\ell+1)}\left(\frac{-\bar u}{q}\right)(2q)^{1-s}\Gamma_\ell(1-s) L(1-s,f_0,-\overline{4u}/q)$$ 
with $u\bar u = 1 \:[q]$. 

Moreover, this $L$-function is of polynomial growth on vertical strips \text{i.e.} for any $\sigma_1<\sigma_2$, there exists $A>0$ such that
$$|L(s,f,u/q)|\ll_{f, q, \sigma_1,\sigma_2} (1+|t|)^{A}$$ for all $s=\sigma+it$ with $\sigma\in[\sigma_1,\sigma_2]$ and $t$ real. 
\end{lemme}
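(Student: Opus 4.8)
The plan is to realize $\Gamma_\ell(s)L(s,f,u/q)$ as a Mellin transform of $f$ along the vertical geodesic above $u/q$ and to extract the functional equation from a modular involution, in the classical manner of Hecke. First I would record the integral representation: inserting the Fourier expansion $f_\infty(z)=\sum_{n\ge 1}\hat f_\infty(n)n^{\ell/2-1/4}e(nz)$, restricting to $z=u/q+iy$ so that $e(nz)=e_q(un)e^{-2\pi ny}$, and applying the Mellin identity $\int_0^\infty e^{-2\pi ny}y^{w}\frac{dy}{y}=(2\pi n)^{-w}\Gamma(w)$ termwise with $w=s+\ell/2-1/4$, I obtain
$$\Gamma_\ell(s)L(s,f,u/q)=\int_0^\infty f\!\left(\tfrac{u}{q}+iy\right)y^{s+\ell/2-1/4}\frac{dy}{y}.$$
The bound \eqref{bound}, namely $|\hat f_\infty(n)|\ll_{f,\epsilon}n^{\alpha+\epsilon}$, justifies both the absolute convergence of $L(s,f,u/q)$ and the termwise interchange for $\re s>1$, and it shows that $f(u/q+iy)$ decays exponentially as $y\to+\infty$.

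Next I would exploit the involution $y\mapsto 1/(4q^2y)$, whose fixed point $y_0=1/(2q)$ is the source of the factor $2q$ in the statement, and split the integral at $y_0$. The tail $\int_{y_0}^\infty$ is entire in $s$ since $f$ vanishes exponentially at the cusp $\infty$. For the piece $\int_0^{y_0}$ I would invoke the modular transformation law. Because $q$ is odd, the cusp $u/q$ is $\Gamma_0(4)$-equivalent to the cusp $0$, so the behaviour of $f$ near $u/q$ is governed by $f_0$; concretely I would produce an explicit matrix combining the Fricke involution, which defines $f_0(z)=(-2iz)^{-(\ell+1/2)}f(-1/4z)$, with an element of $\Gamma_0(4)$ that maps the geodesic above $u/q$ to the one above $-\overline{4u}/q$, sending $u/q+iy$ to the point $-\overline{4u}/q+i/(4q^2y)$ of reciprocal height. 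Substituting this identity and then changing variables $y\mapsto 1/(4q^2y)$ turns $\int_0^{y_0}$ into a tail integral of $f_0$ and leads, after collecting the two entire tails, to the symmetric representation
$$\Gamma_\ell(s)L(s,f,u/q)=\int_{y_0}^\infty f\!\left(\tfrac{u}{q}+iy\right)y^{s+\ell/2-1/4}\frac{dy}{y}+\epsilon_q^{-(2\ell+1)}\!\left(\tfrac{-\bar u}{q}\right)(2q)^{1-2s}\!\int_{y_0}^\infty f_0\!\left(-\tfrac{\overline{4u}}{q}+iy\right)y^{1-s+\ell/2-1/4}\frac{dy}{y}.$$
Both integrals converge for all $s$ and are entire, which gives the analytic continuation; and this representation is invariant under the simultaneous substitution $s\mapsto 1-s$, $f\mapsto f_0$, $u/q\mapsto -\overline{4u}/q$ (using that the Fricke involution is an involution, so $(f_0)_0=f$), which is exactly the asserted functional equation after multiplying by $(2q)^s$.

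The main obstacle is the bookkeeping in the transformation step. The half-integral weight multiplier contributes the automorphy factor $\epsilon_d^{-(2\ell+1)}\left(\frac{c}{d}\right)^{2\ell+1}(cz+d)^{\ell+1/2}$, and one must track the Kronecker symbols and the $\epsilon_d$ through the composition with the Fricke matrix to recover precisely the constant $\epsilon_q^{-(2\ell+1)}\left(\frac{-\bar u}{q}\right)$ and the dual additive twist $-\overline{4u}/q$; this is a quadratic reciprocity and Gauss-sum manipulation for the Kronecker symbol, where the various signs require care. The power $(cz+d)^{\ell+1/2}$ then combines with the change of variables to produce the symmetric gamma factors $\Gamma_\ell(s)\leftrightarrow\Gamma_\ell(1-s)$ and the matching powers of $2q$.

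Finally, for polynomial growth on vertical strips I would argue by the Phragm\'en--Lindel\"of principle. On the line $\re s=\sigma_2>1$ the Dirichlet series gives $|L(s,f,u/q)|\le\sum_{n\ge 1}n^{\alpha+\epsilon-\sigma_2}\ll 1$, uniformly in $t$. On a line $\re s=\sigma_1<0$, the functional equation writes $L(s,f,u/q)$ as the root number times $\frac{\Gamma_\ell(1-s)}{\Gamma_\ell(s)}L(1-s,f_0,-\overline{4u}/q)$, and the last factor is bounded because $\re(1-s)>1$; Stirling's formula shows that the ratio $\Gamma_\ell(1-s)/\Gamma_\ell(s)$ grows only polynomially in $|t|$, the exponential factors $e^{-\pi|t|/2}$ cancelling, so $L$ is polynomially bounded there as well. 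Interpolating between these two vertical lines then yields the stated bound $|L(s,f,u/q)|\ll_{f,q,\sigma_1,\sigma_2}(1+|t|)^A$ throughout the strip $\sigma\in[\sigma_1,\sigma_2]$.
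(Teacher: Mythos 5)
Your proposal cannot be compared to a proof inside the paper, because the paper gives none: Lemma \ref{lemme-eqfonc} is imported from \cite{Hulse} (Lemma 4.3 there), and what you have written is essentially a reconstruction of that reference's argument --- the classical Hecke unfolding via a Mellin transform over the geodesic above $u/q$, splitting at the fixed point $1/(2q)$ of $y\mapsto 1/(4q^2y)$, transport of the lower range by the Fricke involution composed with an element of $\Gamma_0(4)$, and Phragm\'en--Lindel\"of for the vertical growth. The ``main obstacle'' you defer does close cleanly: choosing an integer $v\equiv-\overline{4u}\:[q]$, the matrix $\gamma=\Matrix{(4uv+1)/q & u\\ 4v & q}$ lies in $\Gamma_0(4)$, and with $Wz=-1/(4z)$ one checks $\gamma W(v/q+iy)=u/q+i/(4q^2y)$; the multiplier of $\gamma$ contributes $\epsilon_q^{-(2\ell+1)}\left(\frac{4v}{q}\right)^{2\ell+1}=\epsilon_q^{-(2\ell+1)}\left(\frac{-\bar u}{q}\right)$ since $4$ is a square and $v\equiv-\overline{4u}\:[q]$, while on the geodesic the two automorphy factors are $4vWz+q=iqy/z$ and $-2iz$, whose arguments sum to zero, so their half-integral powers multiply without any stray sign to give exactly $(2qy)^{\ell+1/2}$. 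Substituting and changing variables then produces precisely your symmetric representation with the factor $(2q)^{1-2s}$, and hence the stated functional equation.

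The one step that fails as written is the convergence claim. The bound \eqref{bound} assumes $f$ is a Hecke eigenform, whereas the lemma concerns every $f\in\calS_{\ell+1/2}$; and even for eigenforms, $|\hat f_\infty(n)|\ll_{f,\epsilon} n^{1/6+\epsilon}$ only yields absolute convergence of the Dirichlet series for ${\rm Re}\,s>7/6$, not ${\rm Re}\,s>1$. The correct input is the Rankin--Selberg estimate of Lemma \ref{lemme-RS}: from $\sum_{n\le x}|\hat f_\infty(n)|^2\ll_f x$ and Cauchy--Schwarz one gets $\sum_{n\le x}|\hat f_\infty(n)|\ll_f x$, and partial summation gives absolute convergence, and uniform boundedness on vertical lines, for ${\rm Re}\,s>1$; this also repairs the boundary estimate on ${\rm Re}\,s=\sigma_2$ in your Phragm\'en--Lindel\"of step. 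Finally, note that Phragm\'en--Lindel\"of requires an a priori finite-order bound in the strip; this follows from your entire integral representation, whose two integrals are bounded uniformly in $t$ on any vertical strip, combined with Stirling's estimate $1/|\Gamma_\ell(s)|\ll_\sigma e^{\pi|t|}$. With these repairs your proof is complete and coincides with the proof in the cited source.
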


We finish this section by giving the fundamental property of the Rankin-Selberg $L$-function attached to $f$, see \cite[Proposition 7]{sign}. Here the assumption that $f$ is an eigenform is unnecessary.

\begin{prop}
\label{prop-RS}
Let $f,g\in\calS_{\ell+1/2}$, $s\in\setC$ and let $$D(s,f\times\bar g)=\sum_{n\ge1}\hat f_\infty (n)\overline{\hat g_\infty(n)} n^{-s}.$$

The classical Eisenstein series defined for $z\in\calH$ by
$$E_\infty(z,s)=\sum_{\gamma\in\Gamma_\infty\backslash \Gamma_0(4)}({\rm Im}\: \gamma z)^s$$
converges for {\rm Re} $s>1$ and extends in the $s$ variable to a meromorphic function. In the half-plane {\rm Re} $s\ge1/2$, there is only
one simple pole at $s=1$ with residue $$\underset{s=1}{\rm{Res\:}} E_\infty(z,s)= \rm{Vol}(\Gamma_0(4)\backslash\calH)^{-1}.$$

Moreover, $D(s,f\times\bar g)$ converges absolutely for {\rm Re}$\:s> 1$ and we have 
$$(4\pi)^{-(s+\ell-1/2)}\Gamma(s+\ell-1/2)D(s,f\times\bar g)=\int_{\Gamma_0(4)\backslash\calH}f(z)\overline{g(z)}y^{\ell+1/2}E_\infty(z,s)\frac{dxdy}{y^2}$$
so  $D(s,f\times\bar g)$ extends to a meromorphic function on the region {\rm Re} $s\ge1/2$ with at most one pole, located at $s=1$. Also, one has
$$(4\pi)^{-(\ell+1/2)}\Gamma(\ell+1/2)\underset{s=1}{\rm{Res\:}} D(s,f\times\bar g)= \langle f, g\rangle$$
with
$$\langle f, g\rangle :=\frac{1}{\rm{Vol}(\Gamma_0(4)\backslash\calH)}\int_{\Gamma_0(4)\backslash\calH}f(z)\overline{g(z)}y^{\ell+1/2}\frac{dxdy}{y^2}.$$ 
\end{prop}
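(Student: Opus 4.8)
The plan is to run the classical Rankin--Selberg unfolding argument, which carries over verbatim to the half-integral weight setting because the automorphy multiplier has modulus one. First I would check that
$$\phi(z) := f(z)\overline{g(z)}\,y^{\ell+1/2}$$
is genuinely $\Gamma_0(4)$-invariant. Indeed, for $\gamma=\Matrix{a&b\\c&d}\in\Gamma_0(4)$ the transformation law reads $f(\gamma z)=\nu(\gamma)(cz+d)^{\ell+1/2}f(z)$ with $\nu(\gamma)=\epsilon_d^{-(2\ell+1)}\left(\frac{c}{d}\right)^{2\ell+1}$ of modulus $1$, so $f(\gamma z)\overline{g(\gamma z)}=|cz+d|^{2\ell+1}f(z)\overline{g(z)}$, while ${\rm Im}(\gamma z)^{\ell+1/2}=y^{\ell+1/2}|cz+d|^{-(2\ell+1)}$, and the factors $|cz+d|^{\pm(2\ell+1)}$ cancel. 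Since $f$ and $g$ are cusp forms, $\phi$ is bounded on $\calH$ and decays exponentially at every cusp.

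For ${\rm Re}\,s>1$ the Eisenstein series converges absolutely, and I would first record that $D(s,f\times\bar g)$ converges absolutely there: when $f=g$ the coefficients $|\hat f_\infty(n)|^2$ are non-negative, and integrating the Fourier expansion in $x$ (Parseval) together with the boundedness of $|f(z)|^2y^{\ell+1/2}$ and its exponential decay at infinity dominates the diagonal Dirichlet series by a convergent integral for ${\rm Re}\,s>1$; the general case follows by Cauchy--Schwarz. This estimate also legitimises the Fubini interchange in the unfolding. Next I would form
$$I(s)=\int_{\Gamma_0(4)\backslash\calH}\phi(z)E_\infty(z,s)\frac{dx\,dy}{y^2}$$
and unfold it against $E_\infty(z,s)=\sum_{\gamma\in\Gamma_\infty\backslash\Gamma_0(4)}({\rm Im}\,\gamma z)^s$, collapsing the integral over $\Gamma_0(4)\backslash\calH$ into one over the width-one strip $\Gamma_\infty\backslash\calH=\{0\le x<1,\ y>0\}$:
$$I(s)=\int_0^\infty\!\!\int_0^1 f(z)\overline{g(z)}\,y^{s+\ell+1/2}\frac{dx\,dy}{y^2}.$$
Inserting the Fourier expansions at infinity and integrating in $x$ extracts the diagonal $n=m$, after which the $y$-integral is the Gamma integral $\int_0^\infty e^{-4\pi n y}y^{s+\ell-1/2}\frac{dy}{y}=(4\pi n)^{-(s+\ell-1/2)}\Gamma(s+\ell-1/2)$. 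Collecting the powers of $n$ gives
$$I(s)=(4\pi)^{-(s+\ell-1/2)}\Gamma(s+\ell-1/2)\,D(s,f\times\bar g),$$
which is the asserted integral representation.

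The meromorphic continuation and pole structure are then transferred from $E_\infty$ to $D$. Because $\phi$ decays exponentially at the cusps while the analytically continued $E_\infty(\cdot,s)$ has only moderate (polynomial) growth in $y$, the integral defining $I(s)$ converges locally uniformly in $s$ on the whole region ${\rm Re}\,s\ge1/2$, so $I(s)$ is holomorphic there except where $E_\infty(z,s)$ is singular. By hypothesis this is the single simple pole at $s=1$, whence $I$ is meromorphic on ${\rm Re}\,s\ge1/2$ with at most a simple pole at $s=1$. Dividing by the factor $(4\pi)^{-(s+\ell-1/2)}\Gamma(s+\ell-1/2)$, whose reciprocal is entire and whose only poles lie at $s\le1/2-\ell<1/2$, transports this to $D(s,f\times\bar g)$. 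Finally, taking residues at $s=1$ and pulling the constant ${\rm Res}_{s=1}E_\infty(z,s)={\rm Vol}(\Gamma_0(4)\backslash\calH)^{-1}$ out of the integral gives
$${\rm Res}_{s=1}I(s)={\rm Vol}(\Gamma_0(4)\backslash\calH)^{-1}\!\int_{\Gamma_0(4)\backslash\calH}\!f(z)\overline{g(z)}y^{\ell+1/2}\frac{dx\,dy}{y^2}=\langle f,g\rangle,$$
which combined with the integral representation yields $(4\pi)^{-(\ell+1/2)}\Gamma(\ell+1/2)\,{\rm Res}_{s=1}D(s,f\times\bar g)=\langle f,g\rangle$.

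I expect the only genuinely delicate point to be the continuation step: one must argue that the exponential decay of the cusp-form factor $\phi$ dominates the moderate growth of the analytically continued Eisenstein series uniformly in $s$, so that $I(s)$ extends holomorphically up to ${\rm Re}\,s=1/2$ and its sole singularity there is the simple pole inherited at $s=1$. The unfolding and the Gamma-integral evaluation are routine, and the half-integral weight enters only through the multiplier $\nu(\gamma)$, which disappears once one uses $|\nu(\gamma)|=1$, so no new difficulty arises from it.
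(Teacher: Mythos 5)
Your proof is correct, and it is the standard Rankin--Selberg unfolding argument: the paper itself gives no proof of this proposition, citing it instead from Lau--Royer--Wu (\cite[Proposition 7]{sign}), where precisely this unfolding is carried out, so your argument reconstructs the proof behind the citation. The only part you take as given --- the meromorphic continuation, moderate growth, and residue $\mathrm{Vol}(\Gamma_0(4)\backslash\calH)^{-1}$ of $E_\infty(z,s)$ --- is indeed classical (and is asserted as such in the statement), and the rest of your write-up correctly handles the points that actually need care in the half-integral weight setting: the multiplier $\nu(\gamma)$ disappears because $f$ and $g$ carry the \emph{same} unitary multiplier, and the Tonelli/Fubini interchanges in the unfolding are legitimised by the Parseval bound $\int_0^1|f(x+iy)|^2\,dx\ll y^{-(\ell+1/2)}$, which gives absolute convergence of $D(s,f\times\bar g)$ for $\mathrm{Re}\,s>1$.
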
 

We will also need the two following classical results which we state in one lemma. 

\begin{lemme}
\label{lemme-RS}
Let $f\in\calS_{\ell+1/2}$ and $Y>0$. There exists a constant $c_f$ such that
$$\sum_{m\le Y}|\hat f_\infty(m)|^2 \sim c_f Y$$
 as  $Y\to+\infty$. Moreover, we have
 $$\sum_{m\le Y}\frac{|\hat f_\infty(m)|^2}{\sqrt m}\ll_f  \sqrt Y.$$
\end{lemme}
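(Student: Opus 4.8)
The plan is to deduce both estimates from the analytic properties of the Rankin--Selberg $L$-function $D(s,f\times\bar f)=\sum_{n\ge1}|\hat f_\infty(n)|^2 n^{-s}$ established in Proposition \ref{prop-RS}, combined with a Tauberian argument for the first assertion and partial summation for the second.

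For the first assertion I would set $a_n=|\hat f_\infty(n)|^2\ge0$ and invoke Proposition \ref{prop-RS} with $g=f$: the Dirichlet series $D(s,f\times\bar f)=\sum_{n\ge1}a_n n^{-s}$ converges absolutely for $\re s>1$ and extends to a meromorphic function on the half-plane $\re s\ge1/2$ whose only singularity is a simple pole at $s=1$, with residue
$$c_f:=\underset{s=1}{\rm{Res\:}}D(s,f\times\bar f)=\frac{(4\pi)^{\ell+1/2}}{\Gamma(\ell+1/2)}\langle f,f\rangle.$$
In particular $D(s,f\times\bar f)-c_f/(s-1)$ is holomorphic, hence continuous, up to the line $\re s=1$. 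Since the coefficients $a_n$ are non-negative, the Wiener--Ikehara Tauberian theorem applies directly and yields $\sum_{m\le Y}|\hat f_\infty(m)|^2\sim c_f Y$ as $Y\to+\infty$, which is the first claim; note that this identifies the constant with the residue, consistently with \eqref{lauzhao}.

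For the second assertion I would use partial summation. Writing $A(t)=\sum_{m\le t}|\hat f_\infty(m)|^2$, the first part gives in particular the uniform bound $A(t)\ll_f t$ for $t\ge1$. Abel summation against the smooth weight $t\mapsto t^{-1/2}$ then gives
$$\sum_{m\le Y}\frac{|\hat f_\infty(m)|^2}{\sqrt m}=\frac{A(Y)}{\sqrt Y}+\frac12\int_1^Y\frac{A(t)}{t^{3/2}}\,dt\ll_f\sqrt Y+\int_1^Y t^{-1/2}\,dt\ll_f\sqrt Y,$$
which is the desired bound.

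The only point requiring genuine care is the hypothesis of the Tauberian step: Wiener--Ikehara demands that $D(s,f\times\bar f)-c_f/(s-1)$ be continuous up to the line $\re s=1$, and this is exactly what the meromorphic continuation to $\re s\ge1/2$ with a single pole at $s=1$ furnished by Proposition \ref{prop-RS} provides. Should one prefer to avoid the Tauberian theorem altogether, the same asymptotic --- even with a power-saving error term --- can be obtained by a Perron-type contour integration, shifting the line of integration slightly past $\re s=1$ and collecting the residue at $s=1$; this requires only the polynomial growth of $D(s,f\times\bar f)$ on vertical strips, which follows from the convexity bound together with the integral representation of Proposition \ref{prop-RS}. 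Either way the argument is routine once the pole structure is in hand, so the substantive input is entirely the Rankin--Selberg analysis already recorded above.
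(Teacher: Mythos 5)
Your proof is correct, and it relies on the same essential input as the paper --- the pole of $D(s,f\times\bar f)$ at $s=1$ with residue $\frac{(4\pi)^{\ell+1/2}}{\Gamma(\ell+1/2)}\langle f,f\rangle$ furnished by Proposition \ref{prop-RS}, followed by partial summation for the second bound, exactly as in the paper. The one place where you diverge is the deduction of the asymptotic itself: the paper applies Perron's formula (citing \cite[Lemma 4.1]{sign2}, with the remark that $f$ need not be an eigenform), i.e.\ a contour-shift argument, whereas your primary route is the Wiener--Ikehara Tauberian theorem. Both are legitimate. Your Tauberian route is slightly more economical in hypotheses: it needs only the non-negativity of $|\hat f_\infty(n)|^2$ and the continuity of $D(s,f\times\bar f)-c_f/(s-1)$ up to the line ${\rm Re}\, s=1$, which the meromorphic continuation to ${\rm Re}\, s\ge 1/2$ with a single pole at $s=1$ indeed provides; it does not require any growth estimate on vertical strips. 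The price is that Wiener--Ikehara yields no error term, only the bare asymptotic $\sim c_f Y$ --- which suffices here, since the lemma asks for nothing more. The Perron route, which you correctly sketch as an alternative, does require polynomial growth of $D(s,f\times\bar f)$ on vertical lines (obtainable from the integral representation in Proposition \ref{prop-RS} and standard bounds on the Eisenstein series), but in exchange gives a power-saving error term; this is what the cited lemma of \cite{sign2} actually carries out. Your partial-summation step for the second assertion is identical to the paper's and is carried out correctly.
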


\begin{proof}
The first assertion is proved applying Perron's formula from Proposition \ref{prop-RS} (see \cite[Lemma 4.1]{sign2} and note that $f$ does not need to be an eigenform).

 The second assertion comes from the first one by a summation by parts.
\end{proof}

\begin{rmq}
Since $f_0\in \calS_{\ell+1/2}$, Lemma \ref{lemme-RS} can be stated for $f_0$ as well. 
\end{rmq}

\vskip 1 cm \section{The Vorono\u\i\;summation formula}

Classically, in order to study Fourier coefficients in arithmetic progressions, we use a generalization of the Poisson summation formula which involves the Mellin transform of the test function. This formula is named the Vorono\u\i\;summation formula (see \cite{voro} for more details).

\begin{prop}[Vorono\u\i\: formula]
Let $f\in\calS_{\ell+1/2}$. Let $u$ and $q$ be two integers with $q$ odd and $(u,q)=1$. Let $X>0$ and $w$ be a smooth non-zero real-valued function with compact support in $]0, +\infty[$. Then 
\begin{equation}
\label{voro}
\sum_{n\ge1}\hat f_\infty(n)e_q(un)w\left(\frac{n}{X}\right) = \epsilon_q^{-(2\ell+1)}\left(\frac{-\bar u}{q}\right)\frac{X}{2q}\sum_{m\ge1}\hat f_0(m)e_q(-\overline{4u}m)B\left(\frac{m}{4q^2/X}\right)
\end{equation}
with 
\begin{equation}
\label{bessel}
B(x)=\frac{1}{2i\pi}\int_{(\sigma)}\frac{\Gamma_\ell(s)}{\Gamma_\ell(1-s)}\hat w(1-s)x^{-s}ds
\end{equation}
for $x>0$, $\sigma>1$ and $\hat w(s)=\int_0^{+\infty}w(x)x^s\frac{dx}{x}$.
\end{prop}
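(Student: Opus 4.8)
The plan is to convert the additively twisted sum into a Mellin--Barnes contour integral of the Dirichlet series $L(s,f,u/q)=\sum_{n\ge1}\hat f_\infty(n)e_q(un)n^{-s}$, to apply the functional equation of Lemma~\ref{lemme-eqfonc} to pass to $L(\cdot,f_0,-\overline{4u}/q)$, and then to run the Mellin machinery backwards so as to recover a sum of the shape claimed on the right-hand side, with $B$ emerging exactly as the inverse Mellin transform of $\frac{\Gamma_\ell(s)}{\Gamma_\ell(1-s)}\hat w(1-s)$ from \eqref{bessel}.

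Concretely, I would first use Mellin inversion for the fixed smooth compactly supported $w$: since $\hat w$ is entire and rapidly decreasing on every vertical line, one has $w(y)=\frac{1}{2i\pi}\int_{(\sigma_0)}\hat w(s)y^{-s}\,ds$ for any $\sigma_0$. Inserting this with $y=n/X$ and choosing $\sigma_0>1$, the absolute convergence of $L(s,f,u/q)$ for $\mathrm{Re}\,s>1$ (Lemma~\ref{lemme-eqfonc}) together with the rapid decay of $\hat w$ lets me interchange sum and integral, giving
\[
\sum_{n\ge1}\hat f_\infty(n)e_q(un)w\!\left(\tfrac{n}{X}\right)=\frac{1}{2i\pi}\int_{(\sigma_0)}\hat w(s)\,X^{s}\,L(s,f,u/q)\,ds.
\]
Next I substitute the functional equation of Lemma~\ref{lemme-eqfonc}, which replaces $L(s,f,u/q)$ by $\epsilon_q^{-(2\ell+1)}\left(\frac{-\bar u}{q}\right)(2q)^{1-2s}\frac{\Gamma_\ell(1-s)}{\Gamma_\ell(s)}L(1-s,f_0,-\overline{4u}/q)$, pull out the $s$-independent constant $\epsilon_q^{-(2\ell+1)}\left(\frac{-\bar u}{q}\right)$, and perform the change of variables $s\mapsto 1-s$, which turns the contour $(\sigma_0)$ into $(1-\sigma_0)$, produces the factor $\frac{\Gamma_\ell(s)}{\Gamma_\ell(1-s)}$, and yields the algebraic simplification $X^{1-s}(2q)^{2s-1}=\frac{X}{2q}\left(\frac{4q^2}{X}\right)^{s}$. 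At this point the integral already carries the exact shape of $\frac{X}{2q}$ times an inverse Mellin transform of $\frac{\Gamma_\ell(s)}{\Gamma_\ell(1-s)}\hat w(1-s)$ tested against $L(s,f_0,-\overline{4u}/q)$.

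To finish, I would shift the contour from $\mathrm{Re}\,s=1-\sigma_0$ to $\mathrm{Re}\,s=\sigma$ with $\sigma>1$ (the value used in the definition of $B$), expand $L(s,f_0,-\overline{4u}/q)=\sum_{m\ge1}\hat f_0(m)e_q(-\overline{4u}m)m^{-s}$ (absolutely convergent for $\mathrm{Re}\,s>1$ by Lemma~\ref{lemme-eqfonc} applied to $f_0\in\calS_{\ell+1/2}$), and interchange sum and integral once more. Since $\left(\frac{4q^2}{X}\right)^{s}m^{-s}=\left(m/(4q^2/X)\right)^{-s}$, each summand reproduces $B\!\left(m/(4q^2/X)\right)$ verbatim from \eqref{bessel}, which assembles into the right-hand side of \eqref{voro}.

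The main obstacle is the justification of this contour shift. I would choose $\sigma_0\in(1,\tfrac34+\tfrac{\ell}{2})$, which is possible since $\ell\ge4$; then the starting line $\mathrm{Re}\,s=1-\sigma_0$ lies strictly to the right of every pole of the integrand. Indeed $\hat w(1-s)$ and $L(s,f_0,-\overline{4u}/q)$ are entire, while $\frac{\Gamma_\ell(s)}{\Gamma_\ell(1-s)}$ has poles only at $s=\tfrac14-\tfrac{\ell}{2}-k$ for $k\ge0$, all with real part $\le-\tfrac74$. Hence no residue is collected between $1-\sigma_0$ and $\sigma$, which is precisely why \eqref{voro} carries no additional main term. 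The vanishing of the horizontal segments, and all the sum--integral interchanges above, follow from the rapid decay of $\hat w(1-s)$ on vertical strips (smoothness and compact support of $w$) overwhelming the at most polynomial growth of $L$ on vertical strips (Lemma~\ref{lemme-eqfonc}) and of the ratio $\Gamma_\ell(s)/\Gamma_\ell(1-s)$, whose exponential Stirling factors cancel.
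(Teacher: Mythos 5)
Your proof is correct and takes essentially the same route as the paper: Mellin inversion of $w$, the functional equation of Lemma \ref{lemme-eqfonc}, the change of variables $s\mapsto 1-s$, and expansion of $L(s,f_0,-\overline{4u}/q)$ term by term on a line $\mathrm{Re}\,s=\sigma>1$. The only (immaterial) difference is the order of operations: the paper shifts the contour of the \emph{entire} integrand $L(s,f,u/q)X^s\hat w(s)$ from $(\sigma)$ to $(1-\sigma)$ before invoking the functional equation, so it never needs to locate the poles of $\Gamma_\ell(s)/\Gamma_\ell(1-s)$, whereas you apply the functional equation first and shift back afterwards, which is exactly why you need your (correct) verification that those poles sit at $s=\tfrac14-\tfrac{\ell}{2}-k$ with real part at most $-\tfrac74$.
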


\begin{proof}
Using the inverse Mellin transform for $w$ we get
$$A:=\sum_{n\ge1}\hat f_\infty(n)e_q(un)w\left(\frac{n}{X}\right) = \frac{1}{2i\pi}\int_{(\sigma)}L(s,f,u/q)X^s\hat w(s)ds$$ 
for $\sigma > 1$. The smoothness of $w$ implies that $\hat w$ is rapidly decreasing on vertical strips and since the $L$-function is of polynomial growth, we can shift the domain of integration to Re $s=1-\sigma$ and then, using the functional equation, we get

$$A=\frac{\omega}{2i\pi}\int_{(1-\sigma)}(2q)^{1-2s}\frac{\Gamma_\ell(1-s)}{\Gamma_\ell(s)} L(1-s,f_0,-\overline{4u}/q)X^s
\hat w(s)ds$$
where $\omega= \epsilon_q^{-(2\ell+1)}\left(\frac{-\bar u}{q}\right)$. Changing $s\mapsto1-s$, we have
$$A= \frac{\omega}{2i\pi}\int_{(\sigma)}(2q)^{-1+2s}\frac{\Gamma_\ell(s)}{\Gamma_\ell(1-s)} L(s,f_0,-\overline{4u}/q)X^{1-s}
\hat w(1-s)ds$$
which is enough to conclude since the $L$-function is absolutely convergent for Re $s>1$.

\end{proof}

\begin{rmq}
Of course, $B$ depends on $w$ and on the weight of $f$. For brevity we will not keep track of this dependency.
\end{rmq}

Let us  prove some useful properties on the function $B$.

\begin{lemme}
\label{lemme-B}
The function $B$ is smooth and real on $]0,+\infty[$. 

If $x> 0$ then $$|B(x)| \ll_{A,\ell,w} x^{-A}$$
for any $A>0$. 

If $0<x\le 1$ then  $$|B(x)| \ll_{\epsilon,w} x^{-(1/4-\ell/2+\epsilon)}$$ 
for any $\epsilon>0$. 

In particular, $$|B(x)|\ll_{\ell,w,\epsilon}1$$ for $x>0$ and the Mellin transform of $B$ is well defined for {\rm Re} $s > -(\ell/2-1/4)$ and rapidly decreasing on vertical strips.
\end{lemme}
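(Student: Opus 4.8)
The plan is to work directly from the Mellin--Barnes representation \eqref{bessel} and to read off every assertion from the analytic properties of the two factors in the integrand. First I would make the gamma ratio explicit: with $\mu := \ell/2-1/4$ one computes
$$\frac{\Gamma_\ell(s)}{\Gamma_\ell(1-s)}=(2\pi)^{1-2s}\,\frac{\Gamma(s+\mu)}{\Gamma(1-s+\mu)},$$
so this factor is meromorphic with poles only at $s=-\mu-k$, $k\ge 0$, and by Stirling it grows only polynomially on vertical lines — on $\mathrm{Re}\,s=\sigma$ the exponential factors $e^{-\pi|t|/2}$ from numerator and denominator cancel, leaving growth $\asymp|t|^{2\sigma-1}$. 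Since $w$ is smooth and compactly supported in $]0,+\infty[$, its Mellin transform $\hat w$ is entire and rapidly decreasing on every vertical strip. The product of these two properties makes the integral in \eqref{bessel} absolutely convergent; differentiating under the integral sign in $x$ only inserts polynomial factors in $s$ and powers of $\log x$, which are still dominated by the decay of $\hat w$, so $B\in C^\infty(]0,+\infty[)$. Reality follows by conjugating: since $w$ is real one has $\overline{\hat w(s)}=\hat w(\bar s)$, the gamma ratio obeys the same conjugation symmetry (it is real on the real axis and $\mu$ is real), and the substitution $t\mapsto -t$ on the contour $s=\sigma+it$ returns the original integral, giving $\overline{B(x)}=B(x)$.

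Next I would obtain the two size bounds by moving the contour. For the rapid decay at large $x$ I would push $\mathrm{Re}\,s$ to the right to $\sigma=A$: no pole of the integrand lies in $\mathrm{Re}\,s>1$, so no residue is picked up, the shifted integral converges with an $A$-dependent constant by the estimates above, and the factor $x^{-s}$ then yields $|B(x)|\ll_{A,\ell,w}x^{-A}$. For the bound near the origin I would instead push the contour to the left, to $\sigma=-\mu+\epsilon=-(\ell/2-1/4)+\epsilon$. The nearest pole of $\Gamma(s+\mu)$ sits exactly at $s=-\mu$, so for any $\epsilon>0$ this shift crosses no pole; bounding the integral on this line (where $\Gamma(s+\mu)$ stays at distance $\epsilon$ from its pole) gives $|B(x)|\ll_{\epsilon,w}x^{\mu-\epsilon}=x^{-(1/4-\ell/2+\epsilon)}$ for $0<x\le1$. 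Because $\ell\ge4$ forces $\mu=\ell/2-1/4\ge 7/4>0$, this is a positive power of $x$.

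Finally, combining the two regimes gives $|B(x)|\ll_{\ell,w,\epsilon}1$ for all $x>0$ (the large-$x$ bound with any fixed $\sigma\ge0$, and the small-$x$ bound using $\mu-\epsilon>0$). For the Mellin transform I would invoke Mellin inversion: \eqref{bessel} exhibits $B$ as the inverse Mellin transform of $s\mapsto\Gamma_\ell(s)\,\Gamma_\ell(1-s)^{-1}\,\hat w(1-s)$, so this function is precisely $\int_0^{+\infty}B(x)x^{s}\tfrac{dx}{x}$; it is holomorphic for $\mathrm{Re}\,s>-\mu=-(\ell/2-1/4)$, where $\Gamma_\ell(s)$ has no pole, and it is rapidly decreasing on vertical strips as a product of the rapidly decreasing $\hat w(1-s)$ with the polynomially growing gamma ratio. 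The one point demanding care — and the only genuine obstacle — is the rigorous justification of the two contour shifts: one must verify that the horizontal connecting segments vanish as $|t|\to\infty$, which is exactly where the uniform rapid decay of $\hat w$ on vertical strips, weighed against the Stirling bound $|t|^{2\sigma-1}$ for the gamma ratio, is needed.
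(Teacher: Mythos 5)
Your proof is correct and follows essentially the same route as the paper's: both arguments shift the contour in \eqref{bessel} rightward to $\mathrm{Re}\,s=A$ for the rapid decay at large $x$, and leftward to $\mathrm{Re}\,s=-(\ell/2-1/4)+\epsilon$ (permitted since the gamma ratio has no pole there) for the bound near the origin, with convergence controlled by Stirling's formula against the rapid decay of $\hat w$ on vertical strips. Your additional details (the explicit form $(2\pi)^{1-2s}\Gamma(s+\mu)/\Gamma(1-s+\mu)$, the $|t|^{2\sigma-1}$ growth, the conjugation argument for reality, and the Mellin-inversion justification of the last assertion) merely flesh out steps the paper treats as standard.
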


\begin{proof}Let $x>0$.
It is clear that $\overline{B(x)} = B(x)$. The smoothness of $w$ implies that $\hat w$ is rapidly decreasing on vertical strips and shifting the domain of integration from $\sigma>1$ to any $A>0$, we get
$$|B(x)|\le \frac{1}{2\pi}\int_{-\infty}^{+\infty}\frac{|\Gamma_\ell(A+it)|}{|\Gamma_\ell(1-A-it)|(1+|t|)^r}x^{-A}dt$$ for any $r>0$. Applying the Stirling formula to $\Gamma_\ell$, we obtain the first inequality stated in the lemma.

 The same trick applies to obtain the second statement, recalling that 
 $\frac{\Gamma_\ell(s)}{\Gamma_\ell(1-s)}$ has no pole on Re $s>-(\ell/2-1/4)$ so we can shift the domain 
 of integration to $\sigma=-(\ell/2-1/4)+\epsilon$ for any $\epsilon>0$.
 
\end{proof}

From this we deduce the following.
\begin{prop}
\label{prop-B}
Let $\eta>0$. If $Y\ge1$ then 
$$\sum_{m\ge Y^{1+\eta}}|\hat f_0(m)B\left(\frac{m}{Y}\right)| \ll_{\eta, A} Y^{-A}$$ for any $A\ge1$.
Moreover, if $Y>0$ then
$$\sum_{m\ge 1}|\hat f_0(m)B\left(\frac{m}{Y}\right)| \ll_\epsilon Y^{1+\epsilon}$$ for any $\epsilon>0$.
The implicit constants depending also on $f$ and $w$.
\end{prop}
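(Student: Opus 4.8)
The plan is to reduce both assertions to the Rankin--Selberg average estimate of Lemma \ref{lemme-RS} applied to $f_0$ (legitimate by the remark following that lemma) combined with the size bounds for $B$ furnished by Lemma \ref{lemme-B}. The recurring device is a dyadic decomposition followed by Cauchy--Schwarz: on a dyadic block $m\sim M$ one has
$$\sum_{m\sim M}|\hat f_0(m)| \le M^{1/2}\Bigl(\sum_{m\sim M}|\hat f_0(m)|^2\Bigr)^{1/2} \ll M,$$
where the final bound uses $\sum_{m\le M}|\hat f_0(m)|^2\ll M$ from Lemma \ref{lemme-RS}. This averaged control substitutes for the lack of a good pointwise bound on the individual $\hat f_0(m)$ and is what makes the argument work without assuming $f$ is an eigenform.

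For the first assertion I restrict to $m\ge Y^{1+\eta}$ with $Y\ge1$, so that $x=m/Y\ge Y^\eta\ge1$, and I apply the rapid-decay bound $|B(x)|\ll_A x^{-A}=Y^A m^{-A}$ from the first estimate of Lemma \ref{lemme-B}. Summing dyadically over blocks $M\ge Y^{1+\eta}$ and inserting the block bound above gives, for $A>1$,
$$\sum_{m\ge Y^{1+\eta}}|\hat f_0(m)|\,\bigl|B(m/Y)\bigr| \ll_A Y^A\!\!\sum_{M\ge Y^{1+\eta}}\!\! M^{1-A} \ll_A Y^{A}\,\bigl(Y^{1+\eta}\bigr)^{1-A} = Y^{\,1+\eta-A\eta},$$
the geometric sum being dominated by its first term. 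Since $\eta>0$ is fixed, choosing $A$ large enough makes the exponent $1+\eta-A\eta$ as negative as desired, which yields the claimed $Y^{-A}$ decay for every $A\ge1$.

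For the second assertion with $Y\ge1$ I split the sum at $Y^{1+\epsilon}$ (i.e. take $\eta=\epsilon$). The tail $m\ge Y^{1+\epsilon}$ contributes $O(Y^{-1})$ by the first part and is negligible. On the remaining range $1\le m<Y^{1+\epsilon}$ I use the uniform bound $|B(x)|\ll1$ from Lemma \ref{lemme-B} together with Cauchy--Schwarz and Lemma \ref{lemme-RS}:
$$\sum_{1\le m<Y^{1+\epsilon}}|\hat f_0(m)| \le \bigl(Y^{1+\epsilon}\bigr)^{1/2}\Bigl(\sum_{m<Y^{1+\epsilon}}|\hat f_0(m)|^2\Bigr)^{1/2} \ll Y^{1+\epsilon},$$
which is exactly the desired bound. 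The only range requiring separate treatment is $0<Y<1$: there every term has $x=m/Y>1$, so rapid decay applies to the entire sum and gives $\sum_{m\ge1}|\hat f_0(m)|\,|B(m/Y)|\ll_A Y^A\sum_{m\ge1}|\hat f_0(m)|m^{-A}$, the latter series converging for $A>1$ by the same dyadic estimate; taking $A=1+\epsilon$ produces $\ll_\epsilon Y^{1+\epsilon}$ since $Y<1$. I do not anticipate any genuine obstacle here, as the whole estimate is routine; the only points demanding care are the balancing of exponents in the tail estimate and the separate bookkeeping for small $Y$.
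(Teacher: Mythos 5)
Your proof is correct. It rests on the same two ingredients as the paper's argument — mean-square control of the coefficients of $f_0$ and the decay/boundedness of $B$ from Lemma \ref{lemme-B} — but you combine them differently. The paper performs a single global Cauchy--Schwarz with weights $m^{\pm(1+\epsilon)}$, writing
$$\Bigl(\sum_{m\ge Z}|\hat f_0(m)B(m/Y)|\Bigr)^2\le \sum_{m\ge Z}|\hat f_0(m)|^2m^{-(1+\epsilon)}\sum_{m\ge Z}B^2(m/Y)\,m^{1+\epsilon},$$
bounding the first factor by the absolute convergence of $D(s,f_0\times\bar f_0)$ at $s=1+\epsilon$ (Proposition \ref{prop-RS}) and the second by the rapid decay of $B$; the three choices $Z=Y^{1+\eta}$, $Z=Y$, $Z=1$ then give all the claimed bounds, and on the complementary range $m\le Y$ the paper uses the sharper small-$x$ estimate $|B(x)|\ll x^{\ell/2-1/4-\epsilon}$ together with Lemma \ref{lemme-RS}. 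Your dyadic-block version instead needs only the averaged bound $\sum_{m\le M}|\hat f_0(m)|^2\ll M$ of Lemma \ref{lemme-RS} (legitimately transferred to $f_0$ by the remark following it) and the uniform bound $|B|\ll 1$, at the cost of a routine summation over blocks; this makes your inputs marginally more elementary — you never invoke the analytic continuation statement of Proposition \ref{prop-RS} directly, nor the small-$x$ behaviour of $B$ — while the paper's weighted Cauchy--Schwarz handles each tail in one stroke. The two routes are essentially interchangeable here, and your exponent bookkeeping (in particular $Y^{1+\eta-A\eta}$ for the tail and the separate treatment of $0<Y<1$) is accurate.
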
 

\begin{proof}
Let $Z\ge1$. Applying the Cauchy-Schwarz inequality and using Proposition \ref{prop-RS} for $f=g=f_0$ combined with Lemma 
\ref{lemme-B}, we get
\begin{align*}
\left(\sum_{m\ge Z} |\hat f_0(m)B\left(\frac{m}{Y}\right)|\right)^2 
&\le \sum_{m\ge Z} |\hat f_0(m)|^2m^{-(1+\epsilon)} \sum_{m\ge Z}B^2\left(\frac{m}{Y}\right)m^{1+\epsilon}\\ 
&\ll_{A,\epsilon} Y^A \sum_{m\ge Z}\frac{1}{m^{A-1-\epsilon}}\\
&\ll_{A,\epsilon} \frac{Y^A}{Z^{A-2-2\epsilon}}.
\end{align*}
Taking $\epsilon$ fixed, $Z=Y^{1+\eta}$  and $A$ sufficiently large we obtain the first bound. Take $Z=1$ to have the second bound when $0<Y<1$ (actually a better one that we will not need). 

Otherwise if $Y\ge1$, take $Z=Y$, then
\begin{align*}
\sum_{m\ge Y} |\hat f_0(m)B\left(\frac{m}{Y}\right)|
&\ll_{\epsilon} Y^{1+\epsilon}
\end{align*}
but also
\begin{align*}
\sum_{m\le Y} |\hat f_0(m)B\left(\frac{m}{Y}\right)|
&\ll \left(\sum_{m\le Y} |\hat f_0(m)|^2\right)^{1/2} \left(\sum_{m\le Y}\left(\frac{m}{Y}\right)^{\ell-1/2-\epsilon}\right)^{1/2} & \text{\;\;\; from Lemma \ref{lemme-B}}\\
&\ll Y^{1/2}\cdot Y^{1/2}
\end{align*}
using Lemma \ref{lemme-RS}.

\end{proof}

We finish this section by stating the following Plancherel formula.

\begin{lemme}
\label{lemme-plancherel}
One has
\begin{equation}
||w||_2=||B||_2
\end{equation}
where $||\cdot||_2$ is the $L^2$ norm on $]0,+\infty[$.
\end{lemme}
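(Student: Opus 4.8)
The plan is to read this identity as a unitarity statement for the Mellin transform: passing from $w$ to $B$ multiplies the Mellin transform by the archimedean factor $\Gamma_\ell(s)/\Gamma_\ell(1-s)$, which has modulus $1$ on the critical line $\mathrm{Re}\,s=1/2$, so it preserves the $L^2$ mass carried on that line, and by Mellin--Plancherel this mass is exactly $||\cdot||_2^2$. First I would record the Mellin transform of $B$. Reading \eqref{bessel} as a Mellin inversion, the transform $\hat B(s)=\int_0^{+\infty}B(x)x^s\frac{dx}{x}$ equals
$$\hat B(s)=\frac{\Gamma_\ell(s)}{\Gamma_\ell(1-s)}\,\hat w(1-s),$$
which by the last assertion of Lemma \ref{lemme-B} is holomorphic and rapidly decreasing on vertical strips for $\mathrm{Re}\,s>-(\ell/2-1/4)$; since $\ell\ge4$, the line $\mathrm{Re}\,s=1/2$ lies safely inside this region. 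I would then invoke the Mellin--Plancherel formula in the form
$$||g||_2^2=\int_0^{+\infty}|g(x)|^2\,dx=\frac{1}{2\pi}\int_{-\infty}^{+\infty}\big|\hat g(\tfrac12+it)\big|^2\,dt,$$
which is merely Fourier--Plancherel after the substitution $x=e^u$. This applies to $w$, smooth with compact support in $]0,+\infty[$, and to $B$, which by Lemma \ref{lemme-B} is bounded near $0$ and rapidly decreasing at infinity, hence lies in $L^2(]0,+\infty[)$.

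The key step will be to verify that the archimedean factor is unitary on the critical line. Writing $\Gamma_\ell(s)=(2\pi)^{-(s+\ell/2-1/4)}\Gamma(s+\ell/2-1/4)$ gives
$$\frac{\Gamma_\ell(s)}{\Gamma_\ell(1-s)}=(2\pi)^{1-2s}\,\frac{\Gamma(s+\ell/2-1/4)}{\Gamma(1-s+\ell/2-1/4)}.$$
On $s=\tfrac12+it$ the prefactor $(2\pi)^{1-2s}=(2\pi)^{-2it}$ has modulus $1$, while the two gamma arguments become $\beta+it$ and $\beta-it$ with $\beta=1/4+\ell/2$ real; since $\overline{\Gamma(z)}=\Gamma(\bar z)$, the quotient $\Gamma(\beta+it)/\Gamma(\beta-it)$ is the ratio of a complex number to its conjugate and therefore also has modulus $1$. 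Consequently $\big|\hat B(\tfrac12+it)\big|=\big|\hat w(\tfrac12-it)\big|$ for every real $t$.

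Finally I would combine these ingredients. By Mellin--Plancherel applied to $B$ and the previous identity,
$$||B||_2^2=\frac{1}{2\pi}\int_{-\infty}^{+\infty}\big|\hat B(\tfrac12+it)\big|^2\,dt=\frac{1}{2\pi}\int_{-\infty}^{+\infty}\big|\hat w(\tfrac12-it)\big|^2\,dt,$$
and the change of variable $t\mapsto-t$ turns the right-hand side into $\frac{1}{2\pi}\int_{-\infty}^{+\infty}|\hat w(\tfrac12+it)|^2\,dt=||w||_2^2$, whence $||B||_2=||w||_2$. The substantive point—and the place where Lemma \ref{lemme-B} does the real work—is the analytic justification that $B\in L^2(]0,+\infty[)$ and that its Mellin transform may legitimately be evaluated on $\mathrm{Re}\,s=1/2$; the unitarity of the gamma factor, though it is the conceptual core of the statement, is then only an elementary computation.
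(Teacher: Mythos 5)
Your proof is correct, but it takes a genuinely different route from the paper's. The paper never moves to the critical line: it writes $\int_0^{+\infty}B(x)^2\,dx$, replaces one factor $B(x)$ by its defining contour integral \eqref{bessel} on a line $\sigma>1$, applies Fubini, and recognizes the inner integral as $\hat B(1-s)=\frac{\Gamma_\ell(1-s)}{\Gamma_\ell(s)}\hat w(s)$; the gamma factors then cancel identically, since $\frac{\Gamma_\ell(s)}{\Gamma_\ell(1-s)}\cdot\frac{\Gamma_\ell(1-s)}{\Gamma_\ell(s)}=1$ for \emph{every} $s$, and a second application of Mellin inversion for the smooth, compactly supported $w$ produces $\int_0^{+\infty}w(x)^2\,dx$. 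That is a bilinear Parseval computation on a line of absolute convergence, needing only Fubini and classical Mellin inversion. Your argument instead invokes the unitary Mellin--Plancherel theorem on $\mathrm{Re}\,s=1/2$ together with the observation that $\Gamma_\ell(s)/\Gamma_\ell(1-s)$ has modulus $1$ there (conjugate symmetry of $\Gamma$ plus $|(2\pi)^{-2it}|=1$). This is the cleaner conceptual reading --- $B$ is obtained from $w$ by a unitary Mellin multiplier, hence the map is an $L^2$ isometry --- but it requires the extra analytic bookkeeping you correctly supply from Lemma \ref{lemme-B}: that $B\in L^2(]0,+\infty[)$, and that the identity $\hat B(s)=\frac{\Gamma_\ell(s)}{\Gamma_\ell(1-s)}\hat w(1-s)$ may be evaluated on the critical line (contour shift justified by holomorphy and rapid decay in the strip $\mathrm{Re}\,s>-(\ell/2-1/4)$). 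Both proofs rest on the same functional-equation identity for $\hat B$; the paper exploits its $s\leftrightarrow 1-s$ reciprocity algebraically on a convenient line, while you exploit its unitarity on the self-dual line, at the cost of invoking the $L^2$ theory of the Mellin transform.
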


\begin{proof}
First note that $B$ is the inverse Mellin transform of $s\mapsto \frac{\Gamma_\ell(s)}{\Gamma_\ell(1-s)}\hat w(1-s)$. Hence for $\sigma >1$

\begin{align*}
\int_0^{+\infty} B(x)^2dx &= \frac{1}{2i\pi}\int_{(\sigma)}\frac{\Gamma_\ell(s)}{\Gamma_\ell(1-s)}\hat w(1-s)\int_0^{+\infty} B(x) x^{1-s} \frac{dx}{x}ds\\
&= \frac{1}{2i\pi}\int_{(\sigma)}\frac{\Gamma_\ell(s)}{\Gamma_\ell(1-s)}\hat w(1-s)\frac{\Gamma_\ell(1-s)}{\Gamma_\ell(s)}\hat w(s)ds\\
&=  \frac{1}{2i\pi}\int_{(\sigma)}\int_0^{+\infty} w(x)x^{1-s}\hat w(s) \frac{dx}{x}ds\\
&= \int_0^{+\infty} w(x)^2 dx.
\end{align*}

\end{proof}

\vskip 1 cm

 \section{Moments of Kloosterman-Salié sums}
In order to compute their moments, we give the exact formula for Kloosterman-Salié sums with prime power moduli. Proofs of these facts can be found in \cite[Lemma 8.4.3]{coursKo} or \cite[Lemmas 12.2, 12.3 and 12.4]{IwKo} and are based on the stationnary phase method. All the results of this section will be used in the next one. \\

Let $p$ be an odd prime. Let $N$, $m$, $n$ be integers with $N\ge1$ and let $q=p^N$.   We define the normalized Kloosterman and Salié sums respectively by 
\begin{align*}
{\rm Kl}_q(m,n)= \frac{1}{\sqrt q}{\sum_{x\: [q]}}^{\times}e_q(mx+n\bar x)\\
{\rm Sal}_q(m,n)= \frac{1}{\sqrt q}{\sum_{x\: [q]}}^{\times}\left(\frac{x}{q}\right)e_q(mx+n\bar x)
\end{align*}
where the sums are taken over the invertible classes modulo $q$ and $\bar x$ denotes the inverse of $x$ modulo $q$. 

\begin{prop}
\label{prop-Salie}
If $m$ and $n$ are coprime to $p$ then
$${\rm Sal}_q(m,n) = \left(\frac{m}{q}\right)\epsilon_q\sum_{x^2=mn \:[q]}e_q(2x)$$ and if $N\ge2$,
$${\rm Kl}_q(m,n) = \epsilon_q\sum_{x^2=mn \:[q]}\left(\frac{x}{q}\right)e_q(2x).$$

If $m$ is coprime to $p$ and $p\mid n$ then the sums vanish. Of course when $N$ is even these sums coincide.
\end{prop}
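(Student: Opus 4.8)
The plan is to evaluate both sums at once by the stationary phase / Hensel lifting method for exponential sums with prime power modulus, carrying the Kronecker symbol as an inert multiplicative weight; the two formulas will then differ only through the way this weight interacts with the character produced by the underlying Gauss sum. Write $g(x)=mx+n\bar x$, so that $g'(x)=m-n\bar x^2$ and $g''(x)=2n\bar x^3$, and set $\alpha=\lceil N/2\rceil$, $\beta=\lfloor N/2\rfloor$, so $\alpha+\beta=N$ and $2\alpha\ge N$. First I would parametrize each invertible class modulo $q$ as $x=a+p^\alpha b$, with $a$ running over the invertible classes modulo $p^\alpha$ and $b$ modulo $p^\beta$, and Taylor expand
\[
g(a+p^\alpha b)\equiv g(a)+p^\alpha b\,g'(a)\quad[q],
\]
the quadratic and higher terms vanishing because of their factor $p^{2\alpha}$. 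Executing the geometric sum over $b$ forces the stationary condition $g'(a)\equiv0\,[p^\beta]$, i.e.\ $a^2\equiv n\bar m\,[p^\beta]$, and leaves
\[
\sqrt q\,{\rm Kl}_q(m,n)=p^\beta\sum_{\substack{a\,[p^\alpha]\\ a^2\equiv n\bar m\,[p^\beta]}}e_q(g(a)),\qquad
\sqrt q\,{\rm Sal}_q(m,n)=p^\beta\sum_{\substack{a\,[p^\alpha]\\ a^2\equiv n\bar m\,[p^\beta]}}\left(\frac{a}{q}\right)e_q(g(a)),
\]
where one checks that $e_q(g(a))$ is well defined on stationary classes modulo $p^\alpha$, since changing $a$ into $a+p^\alpha$ modifies $g(a)$ by $p^\alpha g'(a)\equiv0\,[q]$.

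This reduction already yields the vanishing statement: if $p\mid n$ and $N\ge2$ (so $\beta\ge1$), the congruence $a^2\equiv n\bar m\equiv0\,[p^\beta]$ has no solution coprime to $p$, and both sums are empty. When $N$ is even one has $\alpha=\beta$, hence $p^N\equiv1\,[4]$, so $\epsilon_q=1$ and the Kronecker symbol $\left(\frac{x}{q}\right)=\legendre{x}^{N}$ is identically $1$; the two sums therefore coincide. The two stationary classes then lift, by Hensel's lemma, to the two exact square roots of $n\bar m$ modulo $q$, and choosing such a lift $a$ (legitimate by the well-definedness above) gives $a^2\equiv n\bar m\,[q]$, whence $n\bar a\equiv ma\,[q]$ and $g(a)=2ma$ exactly. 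Setting $x=ma$, so that $x^2\equiv mn\,[q]$, the prefactor $p^\beta/\sqrt q$ equals $1$ and I recover ${\rm Kl}_q={\rm Sal}_q=\sum_{x^2\equiv mn\,[q]}e_q(2x)$, matching the claim.

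The delicate case is $N$ odd, where the second order term reappears. Each of the two roots $\pm a_0$ of $a^2\equiv n\bar m\,[p^\beta]$ now has $p$ lifts $a_0+p^\beta j$, $j\,[p]$, modulo $p^\alpha$, and a second order expansion gives, writing $g'(a_0)=p^\beta g_1$,
\[
g(a_0+p^\beta j)\equiv g(a_0)+p^{N-1}\bigl(g_1 j+n a_0^{-3}j^2\bigr)\quad[q].
\]
Summing over $j$ produces the quadratic Gauss sum $\sum_{j\,[p]}e_p\bigl(na_0^{-3}j^2+g_1 j\bigr)=\legendre{na_0^{-3}}\,\epsilon_p\sqrt p\;e_p\bigl(-\overline{4na_0^{-3}}\,g_1^2\bigr)$, which is where the constants are born: the factor $\epsilon_p\sqrt p$ combines with $p^\beta/\sqrt q$ into exactly $\epsilon_p=\epsilon_q$ (since $p^N\equiv p\,[4]$ forces $\beta+\tfrac12=N/2$), the character $\legendre{na_0^{-3}}$ simplifies to $\legendre{x}=\left(\frac{x}{q}\right)$ using $\legendre{n}=\legendre{m}$ and $x=ma_0$, and the completing-the-square phase $e_p(-\overline{4na_0^{-3}}\,g_1^2)$ is precisely the correction shifting $g(a_0)$ to the value at the exact $p$-adic square root, i.e.\ to $e_q(2x)$. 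For the Salié sum the inert weight contributes the extra $\legendre{a_0}=\legendre{m}\legendre{x}$, turning the per-root character $\legendre{x}$ into $\legendre{m}$ and producing the prefactor $\left(\frac{m}{q}\right)\epsilon_q$; this is exactly the source of the asymmetry between the two formulas.

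The hard part will be carrying out this last bookkeeping faithfully — verifying that the quadratic-completion phase exactly reconstitutes the value of $g$ at the true square root of $mn$ modulo $q$, and that the residual characters collapse to $\left(\frac{x}{q}\right)$ — and it is precisely here that I would lean on the stationary phase computations of \cite{coursKo} and \cite{IwKo}. Finally, the base case $N=1$ of the Salié formula, where the reduction above degenerates, is the classical twisted Gauss sum evaluation (and there is no elementary closed form for ${\rm Kl}_p$, consistent with the Kloosterman formula being asserted only for $N\ge2$).
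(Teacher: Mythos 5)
Your proposal is correct and is essentially the paper's own approach: the paper gives no proof of this proposition at all, deferring entirely to \cite[Lemma 8.4.3]{coursKo} and \cite[Lemmas 12.2, 12.3 and 12.4]{IwKo}, and the stationary-phase argument those references use is precisely what you carry out (parametrization $x=a+p^\alpha b$, stationary condition $a^2\equiv n\bar m\:[p^\beta]$, Hensel lifting for even $N$, quadratic Gauss sum for odd $N$). The two points you defer are both sound, so your sketch is genuinely completable: the phase bookkeeping is a short computation (writing the exact root as $A=a_0+p^\beta t$, one finds $t\equiv-\bar 2\,g_1a_0\bar m\:[p]$ and then $g_1t+n\bar a_0^{3}t^2\equiv-\overline{4n\bar a_0^{3}}\,g_1^2\:[p]$, so the Gauss-sum phase does convert $e_q(g(a_0))$ into $e_q(2x)$ with $x=mA$), and your restriction of the vanishing claim to $N\ge 2$ is a needed precision rather than a weakness, since for $N=1$ and $p\mid n$ both sums are in fact nonzero (a Gauss sum and a Ramanujan sum, respectively).
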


Note that if $m$ and $n$ are integers such that $\left(\frac{m}{p}\right)=\left(\frac{n}{p}\right)=1$ then 
$${\rm Sal}_q(m,n) = 2\epsilon_q \cos\left(\frac{4\pi{\sqrt{mn}}^q}{q}\right)$$ where ${\sqrt{x}}^q$ is the square root of $x$ mod $q$ taken as an integer in $\llbracket1,(q-1)/2\rrbracket$. It will be clear that in the sequel, all the results involving this notation will be in fact independent of the choice of the square root. Let us recall the following notation we introduced in section \ref{section1}. For any $x$ modulo $q$,
$$\text{Sa}_q(x) =\left\{\begin{array}{cc} 
2 \cos\left(\frac{2\pi{\sqrt{x}}^q}{q}\right) & \text{ if $\legendre{x}=1$}\\
0 & \text{ otherwise}
\end{array}\right.$$
We now compute the moments of such quantities.

\begin{lemme}
\label{lemme-momentsalie}
Let $p$, $q$ be as above and let $m_1,\ldots,m_\nu$ be some positive integers with $\nu\ge1$ and assume that $\legendre{m_i}=\legendre{m_1}=\pm 1$ for all $2\le i\le\nu$. Then
 $$\setE^\mp\left( \prod_{i=1}^\nu{\rm Sa}_q(m_ia)\right)=0$$ and
$$\setE^\pm\left( \prod_{i=1}^\nu{\rm Sa}_q(m_ia)\right)=  \frac{q}{\varphi(q)}\sum\limits_{\text{\boldmath $e$}\in\{\pm 1\}^\nu}\left(\delta_q\left({\sum\limits_{i=1}^\nu e_i{\sqrt{\mu_p m_i}}^q}\right) -\frac{1}{p}\:\delta_{q/p}\left({\sum\limits_{i=1}^\nu e_i{\sqrt{\mu_pm_i}}^{q/p}}\right)\right)$$ 
where $\mu_p$ is any integer such that $\legendre{\mu_p}=\legendre{m_1}$ and where, by convention, $\delta_{q/p}=1$ if $q=p$.\\

Obviously, if  $\legendre{m_1}\neq\legendre{m_i}$ for some $i$ the left hand-side of the equality vanishes.
\end{lemme}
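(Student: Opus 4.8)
The plan is to reduce the whole computation to an elementary Ramanujan sum, after expanding the cosines into exponentials and reparametrizing the quadratic-residue class by squares of units.

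First I would isolate the support of the product and dispose of the vanishing statement. Since ${\rm Sa}_q(m_i a)=0$ unless $\legendre{m_i a}=1$, i.e. unless $\legendre{a}=\legendre{m_i}=\legendre{m_1}$, the product $\prod_{i=1}^\nu {\rm Sa}_q(m_i a)$ vanishes identically on the class $\legendre{a}=-\legendre{m_1}$, and averaging over that class gives $\setE^\mp\big(\prod_i {\rm Sa}_q(m_i a)\big)=0$. On the remaining class $\legendre{a}=\legendre{m_1}=\pm1$ every factor is nonzero with $\legendre{m_i a}=1$, so writing $2\cos(2\pi y/q)=e_q(y)+e_q(-y)$ with $y={\sqrt{m_i a}}^q$ I would expand
\[
\prod_{i=1}^\nu {\rm Sa}_q(m_i a) = \sum_{\boldsymbol{e}\in\{\pm1\}^\nu} e_q\Big(\sum_{i=1}^\nu e_i\,{\sqrt{m_i a}}^q\Big),
\]
the expression being insensitive to the choice of square root because $\pm{\sqrt{m_i a}}^q$ exhaust the two square roots of $m_i a$.

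The key algebraic step comes next. Fixing $\mu_p$ with $\legendre{\mu_p}=\legendre{m_1}$, both $\mu_p m_i$ and $\mu_p a$ are squares modulo $q=p^N$ (since $\legendre{\mu_p m_i}=\legendre{m_1}^2=1$), so their square roots are defined. As $t$ runs over the units modulo $q$, the map $t\mapsto a=\bar\mu_p t^2$ covers the class $\{\legendre{a}=\pm1\}$ exactly twice, via $\pm t$. From $m_i a=(\mu_p m_i)\bar\mu_p^2 t^2=\big({\sqrt{\mu_p m_i}}^q\,\bar\mu_p t\big)^2\pmod q$ I may take ${\sqrt{m_i a}}^q=\pm{\sqrt{\mu_p m_i}}^q\,\bar\mu_p t$, so each $\boldsymbol{e}$-summand becomes $e_q\big(\bar\mu_p t\sum_i e_i{\sqrt{\mu_p m_i}}^q\big)$. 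After the substitution $t\mapsto\mu_p t$ and interchanging the $\boldsymbol{e}$- and $t$-summations this yields
\[
{\sum_{\substack{a\,[q]\\ \legendre{a}=\pm1}}}^{\times}\prod_{i=1}^\nu {\rm Sa}_q(m_i a)=\frac12\sum_{\boldsymbol{e}\in\{\pm1\}^\nu}{\sum_{t\,[q]}}^{\times}e_q\big(t\,K_{\boldsymbol{e}}\big),\qquad K_{\boldsymbol{e}}:=\sum_{i=1}^\nu e_i\,{\sqrt{\mu_p m_i}}^q .
\]

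Finally I would evaluate the inner Ramanujan sum by splitting off the multiples of $p$: for any integer $K$ one has ${\sum_{t\,[q]}}^{\times}e_q(tK)=q\,\delta_q(K)-\tfrac{q}{p}\,\delta_{q/p}(K)$, which also covers $N=1$ under the convention $\delta_1=1$. Substituting $K=K_{\boldsymbol{e}}$ and multiplying by $2/\varphi(q)$ gives exactly the asserted formula for $\setE^\pm$. The main obstacle is not conceptual but bookkeeping: keeping the sign conventions of ${\sqrt{\cdot}}^q$ coherent through the reparametrization and correctly tracking the factor $\tfrac12$ from the two-to-one covering $t\mapsto\bar\mu_p t^2$. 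One should also verify the claimed independence from $\mu_p$: replacing $\mu_p$ by $\mu_p s^2$ sends each ${\sqrt{\mu_p m_i}}^q$ to $\pm s\,{\sqrt{\mu_p m_i}}^q$, hence permutes the $K_{\boldsymbol{e}}$ up to the unit factor $s$, under which both $\delta_q$ and $\delta_{q/p}$ are invariant.
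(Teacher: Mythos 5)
Your proof is correct and follows essentially the same route as the paper: parametrize the residue class $\{\legendre{a}=\pm1\}$ two-to-one by squares of units (the paper writes $a=\mu_p b^2$ where you write $a=\bar\mu_p t^2$ and then shift $t\mapsto\mu_p t$), expand the product of cosines over $\boldsymbol{e}\in\{\pm1\}^\nu$, and evaluate the resulting Ramanujan sum $\sum_{t\,[q]}^{\times}e_q(tK)=q\,\delta_q(K)-\tfrac{q}{p}\,\delta_{q/p}(K)$. Your additional remarks (sign-insensitivity of the square-root choices and independence of $\mu_p$) are correct and only make explicit points the paper leaves implicit.
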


\begin{proof} First note that ${\rm Sa}_q(m_ia)=0$ if $\legendre{a}\neq\legendre{m_i}=\legendre{\mu_p}$ so we just have to show the second equality where the expected value can be viewed as half the sum  taken over the $a=\mu_p b^2$ for any $b\:[q]$.
\begin{align*}
\setE^\pm\left( \prod_{i=1}^\nu{\rm Sa}_q(m_ia)\right)
&= \frac{1}{\varphi(q)}{\sum_{b\:[q]}}^\times \prod_{i=1}^\nu{\rm Sa}_q(m_i\mu_pb^2)\\
&=  \frac{1}{\varphi(q)}{\sum_{b\:[q]}}^\times\sum_{\text{\boldmath $e$}\in\{\pm 1\}^\nu}e_q\left(b\sum_{i=1}^\nu e_i{\sqrt{\mu_p m_i}}^q \right). 
\end{align*}
Using the explicit formula for the Ramanujan sum (\cite[page 50]{coursKo}), we get the conclusion.
\end{proof}

As in \cite[section 4]{KR} we would like to write this moment as a main term (independent of $m_i$ and $q$) plus an error term when $q\to +\infty$. However this seems to be impossible because as random variables, the $(\text{Sa}_q(m_i \:\cdot))_i$ cannot behave independently when $q\to +\infty$.  Indeed, we have 
$$\text{Sa}_q(m x)= 2T_{{\sqrt m}^q}(\text{Sa}_q(x)/2)$$
with $\legendre{m}=\legendre{x}=1$ and where $T_n$ is the Tchebychev polynomial of degree $n$.\\

Nevertheless, we manage to give an approximation of the Salié moment whose proof involves the following definition.

\begin{defi}
Let $r\ge2$ and $\E_1$ the set of all $\text{\boldmath $e$}\in\{\pm 1\}^r$ such that $e_1=1$. We define the $r$ variables polynomial $Q_r$ with coefficients in $\setZ$ by 
$$Q_r(x_1^2,\ldots,x_r^2)=\prod_{\text{\boldmath $e$}\in \E_1}\sum_{i=1}^r e_ix_i.$$

It is well defined since the polynomial on the right-hand side is an even function in each variable $x_i$. Note that $\deg Q_r=2^{r-2}$. For $r\in\{0,1\}$, we put $Q_1(x)=x$ and $Q_0 = 0$.
\end{defi}

\begin{lemme}
\label{lemme-moment}
Let $Y>0$ such that $(\nu^2 Y)^{2^{\nu-2}}<q/2$. Then for any natural numbers $m_1,\ldots,m_\nu<Y$ with $\legendre{m_i}=1$ for all $i$, we have

$$\sum\limits_{\text{\boldmath $e$}\in\{\pm 1\}^\nu}\delta_{q}\left(\sum\limits_{i=1}^\nu e_i{\sqrt m_i}^q\right) \le 2^\nu 
\sum\limits_{\text{\boldmath $e$}\in\{\pm 1\}^\nu}\delta_0\left(\sum\limits_{i=1}^\nu e_i\sqrt m_i \right)$$
where  $\sqrt m_i$ is the non negative square root of $m_i$ in $\setZ$.
\end{lemme}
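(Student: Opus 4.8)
The plan is to convert the modular vanishing of a signed sum of the modular square roots $s_i:={\sqrt{m_i}}^q$ into the exact vanishing of a fixed integer, and then to read off a genuine real relation among the ordinary square roots $\sqrt{m_i}$. Throughout I write $s_i={\sqrt{m_i}}^q\in\llbracket1,(q-1)/2\rrbracket$, so that $s_i^2\equiv m_i\:[q]$. If the left-hand side is zero there is nothing to prove, and this already disposes of the case $\nu=1$, since then $s_1\in\llbracket1,(q-1)/2\rrbracket$ is never $\equiv0\:[q]$. So I would assume $\nu\ge2$ and fix some $\text{\boldmath $e$}^{(0)}\in\{\pm1\}^\nu$ with $\sum_i e^{(0)}_i s_i\equiv0\:[q]$; flipping all signs, which preserves this congruence, I may assume $e^{(0)}_1=1$, i.e. $\text{\boldmath $e$}^{(0)}\in\E_1$.

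The key step is to recognize $\sum_i e^{(0)}_i s_i$ as one of the integer factors of the product $Q_\nu(s_1^2,\ldots,s_\nu^2)=\prod_{\text{\boldmath $e$}\in\E_1}\sum_{i=1}^\nu e_i s_i$. Since this factor is divisible by $q$ and all factors are integers, the whole product is divisible by $q$. As $Q_\nu$ has coefficients in $\setZ$ and $s_i^2\equiv m_i\:[q]$, reducing coefficientwise gives $Q_\nu(s_1^2,\ldots,s_\nu^2)\equiv Q_\nu(m_1,\ldots,m_\nu)\:[q]$, and hence $q\mid Q_\nu(m_1,\ldots,m_\nu)$. The crucial gain is that this last quantity is a fixed integer depending only on the $m_i$, and is insensitive to the (ambiguous) choice of modular square roots.

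Next I would estimate this integer using the real square roots. Evaluating the defining product at $x_i=\sqrt{m_i}$ gives $Q_\nu(m_1,\ldots,m_\nu)=\prod_{\text{\boldmath $e$}\in\E_1}\sum_i e_i\sqrt{m_i}$, a product of $|\E_1|=2^{\nu-1}$ real factors each of modulus at most $\nu\sqrt Y$ because $m_i<Y$; therefore
\[
|Q_\nu(m_1,\ldots,m_\nu)|\le(\nu\sqrt Y)^{2^{\nu-1}}=(\nu^2Y)^{2^{\nu-2}}<q/2<q
\]
by hypothesis. An integer divisible by $q$ yet of absolute value $<q$ must vanish, so $Q_\nu(m_1,\ldots,m_\nu)=0$, and one of its factors is zero: there is $\text{\boldmath $e$}'$ with $\sum_i e'_i\sqrt{m_i}=0$. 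Consequently the right-hand count satisfies $\sum_{\text{\boldmath $e$}\in\{\pm1\}^\nu}\delta_0(\sum_i e_i\sqrt{m_i})\ge1$, while the left-hand side is a sum of $2^\nu$ terms each at most $1$; the claimed inequality follows at once.

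The main obstacle, and the reason for the harmless factor $2^\nu$, is that there is no sign-pattern-by-sign-pattern correspondence between modular and real relations: a single congruence $\sum_i e^{(0)}_i s_i\equiv0\:[q]$ does not by itself force that same pattern to be a real relation. What can be salvaged is only the vanishing of the symmetric quantity $Q_\nu(m_1,\ldots,m_\nu)$, a polynomial in the squares $m_i$ alone and hence independent of the choice of square roots. The growth condition $(\nu^2Y)^{2^{\nu-2}}<q/2$ is calibrated exactly so as to upgrade the divisibility $q\mid Q_\nu(m_1,\ldots,m_\nu)$ to the equality $Q_\nu(m_1,\ldots,m_\nu)=0$, after which the existence of at least one genuine real relation, and thus of one nonzero term on the right, is guaranteed.
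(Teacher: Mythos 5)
Your proof is correct and follows essentially the same route as the paper's: both convert a single congruence $\sum_i e_i{\sqrt{m_i}}^q\equiv0\:[q]$ into the divisibility $q\mid Q_\nu(m_1,\ldots,m_\nu)$, use the bound $|Q_\nu(m_1,\ldots,m_\nu)|\le(\nu^2Y)^{2^{\nu-2}}<q/2$ to force $Q_\nu(m_1,\ldots,m_\nu)=0$, and then extract a vanishing real factor $\sum_i e_i'\sqrt{m_i}=0$. Your write-up is in fact slightly more careful than the paper's on minor points (the sign flip into $\E_1$, the coefficientwise reduction modulo $q$, and the trivial case $\nu=1$), but the argument is the same.
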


\begin{proof}
If $\sum\limits_{i=1}^\nu e_i{\sqrt m_i}^q = 0\:[q]$ then $Q_\nu(m_1,\cdots,m_\nu)= 0 \:[q]$. Thus,
$$\sum\limits_{\text{\boldmath $e$}\in\{\pm 1\}^\nu}\delta_{q}\left(\sum\limits_{i=1}^\nu e_i{\sqrt m_i}^q\right)\le 2^\nu \delta_{q}\left(Q_\nu(m_1,\cdots,m_\nu)\right). $$

The degree and coefficients of the polynomials $Q_\nu$ do not depend on $q$, also we have
$$|Q_{\nu}(m_1,\cdots,m_\nu)|=\prod_{\text{\boldmath $e$}\in \E_1}|\sum_{i=1}^\nu e_i\sqrt m_i|\le (\nu \sqrt Y)^{2^{\nu-1}} < q/2$$
 so we can replace $\delta_q$ by $\delta_0$ in the right hand-side of the previous inequality. However, if $Q_\nu(m_1,\cdots,m_\nu)= 0$ then there exists $\text{\boldmath $e$}\in\{\pm 1\}^\nu$ such that  $\sum\limits_{i=1}^\nu e_i\sqrt m_i = 0$ and the result follows.
\end{proof}

\begin{rmq}
Surprisingly, the use of the polynomials $Q_\nu$ to deal with the quantities $\sum\limits_{i=1}^\nu e_i{\sqrt m_i}$ is done in a very different context in \cite[Lemma 3.4]{HughesRud}.
\end{rmq}

\vskip 1 cm \section{Asymptotic evaluation of the moments}

In this section, we prove a strong form of Theorem \ref{theo-moment}. Let $q=p^N$ be a power of an odd prime and for $a$ coprime to $q$, define $S_q(a)$ and $E_q(a)$ as in \eqref{newS}, \eqref{newE} and \eqref{defS}, \eqref{defE}   for a fixed modular form $f$ satisfying the conditions of Theorem \ref{theo-moment}. We will compute the moments of $E_q$ using the previous results of sections 2, 3 and 4. First, we will apply the Vorono\u\i\: summation formula to make the link with Salié sums. Then we will use the formula that we stated in the previous section for the moments of Salié sums (Lemma \ref{lemme-momentsalie}) and we will apply the approximation formula that we have just proved (Lemma \ref{lemme-moment}).

\vskip 0.5 cm \subsection{Applying the Vorono\u\i\: summation formula}
\begin{prop} 
\label{prop-moment}
Let $\nu$ a positive integer and put $Y=4q^2/X$. One has for any $\eta>0$, $A>0$, $\epsilon>0$ and $a$ coprime to $q$ 
\begin{equation}
\label{moment}
E_q(a)^\nu=\left(\frac{\epsilon_{q}^{-2\ell}}{\sqrt Y}\sum_{1\le m< Y^{1+\eta}}\hat f_0(m){\rm Sa}_{q}(ma)B\left(\frac{m}{Y}\right)\right)^\nu + O\left(Y^{\nu/2+\epsilon}p^{-3/2}+Y^{-A}\right),
\end{equation}
the implicit constant depending on $f, w, \eta, \epsilon$ and $A$.
\end{prop}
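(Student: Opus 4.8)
The plan is to open up $E_q(a)=S_q(a)/\sqrt{X/q}$, detect the congruence $n\equiv a\,[q]$ by additive characters, and transform each frequency with the Vorono\"i formula \eqref{voro}; the frequencies coprime to $q$ will produce the announced Sali\'e main term, and the remaining frequencies will be shown to be negligible. Writing $\mathbf{1}_{n\equiv a\,[q]}=\tfrac1q\sum_{u\,[q]}e_q(u(n-a))$ gives
\[
S_q(a)=\frac1q\sum_{u\,[q]}e_q(-ua)\,A_u,\qquad A_u:=\sum_{n\ge1}\hat f_\infty(n)e_q(un)w\!\left(\tfrac nX\right).
\]
I would sort the frequencies by their $p$-valuation, writing $u=p^jv$ with $(v,p)=1$ and $0\le j\le N$ (the case $j=N$ being $u=0$). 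Since $e_q(un)=e_{q'}(vn)$ with $q'=p^{N-j}$, each $A_{p^jv}$ is a Vorono\"i sum to the smaller modulus $q'$, to which \eqref{voro} applies (its argument rescales as $B(mp^{2j}/Y)$, because $4q'^2/X=Yp^{-2j}$).

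First I would extract $j=0$. Applying \eqref{voro} with $\tfrac{X}{2q}=\tfrac{2q}{Y}$ and summing over $u$ coprime to $q$, the contribution is
\[
\frac{2\epsilon_q^{-(2\ell+1)}}{Y}\sum_{m\ge1}\hat f_0(m)B\!\left(\tfrac mY\right)\,{\sum_{u\,[q]}}^{\!\!\times}\left(\tfrac{-\bar u}{q}\right)e_q(-ua-\overline{4u}m).
\]
The inner complete sum is $\sqrt q$ times a normalized Sali\'e sum, and Proposition \ref{prop-Salie} evaluates it: after the substitution $y=2x$ (legitimate since $q$ is odd) one finds it equals $\left(\tfrac aq\right)\sqrt q\,\epsilon_q\,{\rm Sa}_q(ma)$, where the Kronecker factor $\left(\tfrac aq\right)=\left(\tfrac mq\right)$ on the support $\legendre{ma}=1$. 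This sign is harmless: it equals $1$ when $\nu$ is even, which is the only case producing a main term in the eventual moment computation, and it may be kept as a global $\pm1$ otherwise. Multiplying by $1/\sqrt{X/q}=\sqrt Y/(2\sqrt q)$ collapses the powers of $q$ and yields $M(a):=\tfrac{\epsilon_q^{-2\ell}}{\sqrt Y}\sum_m\hat f_0(m){\rm Sa}_q(ma)B(m/Y)$, of size $|M(a)|\ll Y^{1/2+\epsilon}$ by Proposition \ref{prop-B}; truncating at $m<Y^{1+\eta}$ costs only $O(Y^{-A})$ by its rapid-decay part.

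Next I would bound the error frequencies. For $1\le j\le N-1$ the identical manipulation gives a sum whose outer normalization is $\tfrac{p^j}{Y}$, whose Sali\'e-type character sum to modulus $q'$ is bounded trivially by $2\sqrt{q'}=2p^{(N-j)/2}$, and whose $m$-sum is bounded by $(Yp^{-2j})^{1+\epsilon}$ via Proposition \ref{prop-B} applied with $Y$ replaced by $Yp^{-2j}$. Assembling the exponents, $\tfrac{p^j\,p^{(N-j)/2}}{Y}(Yp^{-2j})^{1+\epsilon}\ll Y^{\epsilon}q^{1/2}p^{-3j/2}$ for $S_q(a)$, hence $\ll Y^{1/2+\epsilon}p^{-3j/2}$ for $E_q(a)$; summing the geometric series over $j\ge1$ produces a total of $\ll Y^{1/2+\epsilon}p^{-3/2}$. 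The zero frequency $u=0$ gives $\tfrac1q\sum_n\hat f_\infty(n)w(n/X)$, the complete cuspidal sum, which by Mellin inversion and a contour shift (using that the $L$-function of the cusp form is entire of polynomial growth and $\hat w$ decays rapidly) is negligible and feeds the $O(Y^{-A})$ term. Thus $E_q(a)=\left(\tfrac aq\right)M(a)+R(a)$ with $|R(a)|\ll Y^{1/2+\epsilon}p^{-3/2}+Y^{-A}$.

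Finally I would raise to the $\nu$-th power through the binomial expansion $E_q(a)^\nu=\sum_{k=0}^\nu\binom{\nu}{k}\big(\tfrac aq\big)^{\nu-k}M(a)^{\nu-k}R(a)^k$: the $k=0$ term is the stated main term (with the harmless sign as above), while each $k\ge1$ term is $\ll Y^{(\nu-k)/2+\epsilon}(Y^{1/2+\epsilon}p^{-3/2}+Y^{-A})^k$. The dominant one is $k=1$, giving $\ll Y^{\nu/2+\epsilon}p^{-3/2}$, and the pieces carrying $Y^{-A}$ are $\ll Y^{(\nu-1)/2-A}$, absorbed into $Y^{-A}$ after renaming $A$; higher $k$ only gain further powers of $p^{-3/2}$. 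I expect the main obstacle to be the algebraic identification of the complete character sum with ${\rm Sa}_q(ma)$ via Proposition \ref{prop-Salie} while correctly tracking the $\epsilon_q$, $\epsilon_{q'}$ and Kronecker phases, together with verifying that the Sali\'e-type sums to modulus $p^{N-j}$ contribute only square-root size so that the powers of $p$ genuinely assemble into a $p^{-3/2}$ saving rather than $O(1)$.
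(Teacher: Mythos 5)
Your proposal is correct and follows essentially the same path as the paper's proof: additive-character detection, sorting frequencies by $p$-adic valuation (the paper indexes them by the modulus $p^{r}$, $r=N-j$), the Vorono\u\i{} formula at each modulus, identification of the complete twisted sum as $\sqrt{q'}\,{\rm Sal}_{q'}(a,\bar 4 m)$ and hence ${\rm Sa}_q(ma)$ via Proposition \ref{prop-Salie}, the bound $|{\rm Sal}_{p^{N-j}}|\le 2$ summed geometrically to produce the $p^{-3/2}$ saving, truncation of the $m$-sum by Proposition \ref{prop-B}, and expansion of the $\nu$-th power. You are in fact slightly more scrupulous than the paper, which silently drops the Kronecker factor $\left(\tfrac{a}{q}\right)=\legendre{a}^{N}$ arising from Proposition \ref{prop-Salie}; as you observe, this sign equals $1$ for even $\nu$ and is constant on each class $\legendre{a}=\pm1$, so it is harmless in the subsequent moment computations.
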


\begin{proof}

We first use additive characters to write

\begin{align*}
S_q(a)&=\frac{1}{q}\sum_{b\:[q]}e_q(-ab)\sum_{n\ge1}\hat f_\infty(n)e_q(nb)w\left(\frac{n}{X}\right)\\
&=\frac{1}{q}{\sum_{b\:[q]}}^\times e_q(-ab)\sum_{n\ge1}\hat f_\infty(n)e_q(nb)w\left(\frac{n}{X}\right)\\
&\phantom{xxxxxxxxxxxxxxxx}+\;\;\;\underbrace{\frac{1}{q}\sum_{r=0}^{N-1}{\sum_{b\:[p^r]}}^\times e_{p^r}(-ab)\sum_{n\ge1}\hat f_\infty(n)e_{p^r}(nb)w\left(\frac{n}{X}\right)}_{=\:\frac{1}{p}S_{p^{N-1}}(a)}.
\end{align*}
We will show that the last term is in fact negligable.

Applying the Vorono\u\i\; formula, we have for any $r\in \llbracket1,N\rrbracket$ and $b$ coprime to $p^r$,
\begin{align*}
&\frac{1}{q}{\sum_{b\:[p^r]}}^\times e_{p^r}(-ab)\sum_{n\ge1}\hat f_\infty(n)e_{p^r}(nb)w\left(\frac{n}{X}\right)\\
&\phantom{xxxxxx}= \frac{\epsilon_{p^r}^{-(2\ell+1)}X}{2p^{N+r}}{\sum_{b\:[p^r]}}^\times e_{p^r}(-ab)\left(\frac{-\bar b}{p^r}\right)\sum_{m\ge1}\hat f_0(m)e_{p^r}(-\overline{4b}m)B\left(\frac{m}{4p^{2r}/X}\right)\\
&\phantom{xxxxxx}= \frac{\epsilon_{p^r}^{-(2\ell+1)}X}{2p^{N+r/2}}\sum_{m\ge1}\hat f_0(m)\text{Sal}_{p^r}(a,\bar 4m)B\left(\frac{m}{4p^{2r}/X}\right)\\
&\phantom{xxxxxx}=\sqrt{\frac{X}{q}}\:\frac{\epsilon_{p^r}^{-(2\ell+1)}p^{(N-r)/2}}{\sqrt Y}\sum_{m\ge1}\hat f_0(m)\text{Sal}_{p^r}(a,\bar 4m)B\left(\frac{mp^{2(N-r)}}{Y}\right).
\end{align*}

Proposition \ref{prop-B} shows that this last quantity is $\ll_\epsilon \sqrt{\frac{X}{q}}\: Y^{1/2+\epsilon}p^{-\frac{3}{2}(N-r)}$ and  the same holds for $r=0$ using directly the Mellin transform
so we get 

\begin{align*}
E_q(a)&=\frac{\epsilon_{q}^{-(2\ell+1)}}{\sqrt Y}\sum_{m\ge1}\hat f_0(m)\text{Sal}_{q}(a,\bar 4m)B\left(\frac{m}{Y}\right)+ O(Y^{1/2+\epsilon}p^{-3/2})\\
&= \frac{\epsilon_{q}^{-2\ell}}{\sqrt Y}\sum_{m\ge1}\hat f_0(m)\text{Sa}_{q}(ma)B\left(\frac{m}{Y}\right)+ O(Y^{1/2+\epsilon}p^{-3/2})\\
&=\frac{\epsilon_{q}^{-2\ell}}{\sqrt Y}\sum_{1\le m< Y^{1+\eta}}\hat f_0(m)\text{Sa}_{q}(ma)B\left(\frac{m}{Y}\right)+ O(Y^{1/2+\epsilon}p^{-3/2}+Y^{-A})
\end{align*}
using Proposition \ref{prop-B} again for any $\eta>0$ and $A\ge1$. The first sum is $\ll Y^{1/2+\epsilon}$ so raising $E_q$ to the  $\nu$-power, we obtain \eqref{moment}.

\end{proof}
\vskip 0.3cm
\begin{rmq}\
\begin{itemize}
\item[$\bullet$] The error term for $r=0$ in the proof is viewed in \cite{FGKM} and \cite{KR} as a fake main term of $S_q$ since it does not depend on $a$. But obviously, including this main term or not in the definition of $S_q$ does not change the result.

\item[$\bullet$] We also proved that 
\begin{equation}
\label{sommemoins}
\frac{S_{p^N}(a)-\frac{1}{p}S_{p^{N-1}}(a)}{\sqrt{X/p^N}} = \frac{\epsilon_{q}^{-2\ell}}{\sqrt Y}\sum_{1\le m< Y^{1+\eta}}\hat f_0(m)\text{Sa}_{q}(ma)B\left(\frac{m}{Y}\right)+ O(Y^{-A})
\end{equation}
which appears to be the ``right'' normalisation of $S_{p^N}$ if we seek for an asymptotic expansion when $p$ is fixed and $N\to+\infty$. 
%Nonetheless, we will not deal with this matter since it provides no information on the Fourier coefficients of $f$.
\end{itemize}
\end{rmq}

\vskip 0.5 cm \subsection{Applying Lemma \ref{lemme-momentsalie}}

We want to give an asymptotic formula for the expected value of the main term in \eqref{moment} when $Y$ and $q$ tend to $+\infty$. Therefore, we define
\begin{align}
\label{sommetordu}
\M_q(a)&:=\frac{1}{\sqrt Y}\sum_{1\le m< Y^{1+\eta}}\hat f_0(m)\text{Sa}_{q}(ma)B\left(\frac{m}{Y}\right).
\end{align}

 First note that 
\begin{equation}
\label{plusmoins}
\setE(\M_q^\nu)= \frac{1}{2}\setE^+(\M_q^\nu)+\frac{1}{2}\setE^-(\M_q^\nu).
\end{equation}

Since the computation of $\setE^-$ is basically the same as for $\setE^+$, we focus on the latter.

\begin{align}
\setE^+(\M_q^\nu) &= \setE^+\left(\frac{1}{Y^{\nu/2}}\left(\sum_{1\le m< Y^{1+\eta}}\hat f_0(m)\text{Sa}(ma)
B\left(\frac{m}{Y}\right)\right)^{\nu}\right) \\
   &= \frac{1}{Y^{\nu/2}} \sum_{1\le m_1,\ldots ,m_\nu< Y^{1+\eta}}\prod_{i=1}^\nu\hat f_0(m_i)
   B\left(\frac{m_i}{Y}\right)\setE^+\left(\prod_{i=1}^\nu\text{Sa}(m_ia)\right)
   \label{EMnu}
\end{align}

 By Lemma \ref{lemme-momentsalie}, if  $\legendre{m_i}=1$ \text{for all}  $i$ then 
\begin{equation}
\label{Esalie}
\setE^+\left(\prod_{i=1}^\nu\text{Sa}(m_ia)\right) =(1-1/p)^{-1}
\sum\limits_{\text{\boldmath$e$}\in\{\pm 1\}^\nu}\left(\delta_{q}\left(\sum\limits_{i=1}^\nu e_i{\sqrt{ m_i}}^q\right) -\frac{1}{p}\:\delta_{q/p}\left(\sum\limits_{i=1}^\nu e_i{\sqrt{m_i}}^{q/p}\right)\right) 
\end{equation}
otherwise this quantity vanishes.\\

Hence, we look for a main term in 

\begin{equation}
\label{eq1}
 \underset{\tiny\begin{array}{c}1\le m_i < Y^{1+\eta}\\1\le i\le\nu\end{array}}{{\sum}^{\:\Box}}\prod_{i=1}^\nu\hat f_0(m_i)
B\left(\frac{m_i}{Y}\right)\sum\limits_{\text{\boldmath$e$}\in\{\pm 1\}^\nu}\delta_{q}\left(\sum\limits_{i=1}^\nu e_i{\sqrt m_i}^q \right)
\end{equation}
and recall that $\Box$ means we impose the condition  $\legendre{m_i}=1$ \text{for all}  $i$.\\

We will show  studying the squarefree part of the $m_i$ that the main term will come from the $\nu$-tuple $(m_1,\dots,m_\nu)$ such that $|\{m_i, i\in\llbracket1,\nu\rrbracket\}|=\nu/2$ for even $\nu$.  \\

We will first do an initial cleaning of \eqref{eq1} so we can apply the main result of section 4 to get precise formulas for odd and even moments.

\vskip 0.5 cm \subsection{Combinatorial aspect}
As in \cite[Lemma 7.1]{KR} we will rearrange the sum in \eqref{eq1} according to which squarefree parts of the $m_i$ appear. However, we use a different approach and notation.  For $s\in\llbracket1,\nu\rrbracket$, we denote by $S(\nu,s)$ the set of surjective functions from $\llbracket1,\nu\rrbracket$ to $\llbracket1,s\rrbracket$. For $j\in\llbracket1,s\rrbracket$, let
$$\sigma^{-1}(j)=\{i\in\llbracket1,\nu\rrbracket\:\vline\:\sigma(i)=j\}$$ and $|\sigma^{-1}(j)|$ its cardinal. For $k\in\llbracket1,\nu\rrbracket$, let
 $$\sigma_k =|\{j\in\llbracket1,s\rrbracket\: :\: |\sigma^{-1}(j)|=k\}|.$$ 
 
 If $m\in\setN^*$ then it can be uniquely written as $m=r^2t$ with $t$ squarefree and $r\ge1$. \underline{From now on}, if $\sigma\in S(\nu,s)$ then  for $1\le i\le \nu$ and $1\le j\le s$ the letters $t_j$, $r_i$ and $m_i$ will always refer to positive integers such that $t_j$ is squarefree and $1\le m_i<Y^{1+\eta}$. Also, the symbol $\ll$ will be used  in the sense that the implicit constant does not depend on $p$, $q$ or $Y$ but of course may depend on the other parameters.
 
 Before coming back to our problem, note that for any set of integer $M$, any complex-valued function $F$ defined on $M^\nu$, we
 have
 \begin{equation}
\label{reorg}
\sum_{\mathbf m\in M^\nu} F(m_1,\ldots,m_\nu)= \sum_{s=1}^\nu\sum_{\sigma\in S(\nu,s)}\sum_{t_1<\ldots<t_s}
\sum_{\tiny{\begin{array}{c}\mathbf m\in M^\nu\\m_i=r_i^2t_{\sigma(i)}\end{array}}}F(m_1,\ldots,m_\nu).
\end{equation}
 
 Therefore, by Equations \eqref{EMnu}, \eqref{Esalie} and \eqref{reorg} we state that

\begin{equation}
\label{eq}
\setE^+(\M_q^\nu)=\frac{(1-1/p)^{-1}}{Y^{\nu/2}}\sum_{s=1}^\nu\sum_{\sigma\in S(\nu,s)}\left(\Sigma_\sigma(q)-\frac{1}{p}\Sigma_\sigma(q/p)\right)
\end{equation}
where
$$\Sigma_\sigma(q)= \underset{\tiny{\begin{array}{c}t_1<\ldots<t_s\\m_i=r_i^2t_{\sigma(i)}\end{array}}}{{\sum}^{\:\Box}}\prod_{i=1}^\nu\hat f_0(m_i)
B\left(\frac{m_i}{Y}\right)\sum\limits_{\text{\boldmath $e$}\in\{\pm 1\}^\nu}\delta_{q}\left(\sum\limits_{i=1}^\nu e_i{\sqrt m_i}^q \right).$$
for any $s\in\llbracket1,\nu\rrbracket$ and $\sigma\in S(\nu,s)$.

\vskip 0.5 cm \subsection{Applying Lemma \ref{lemme-moment}}

We will show the following.

\begin{prop}
\label{prop-sigmasigma}
Let $C_\nu=\big(2\nu^{2^{\nu-1}}\big)^{-1}$. Assume  there exists $\delta>0$ such that $Y^{2^{\nu-2}+\delta}<C_\nu q$. Then for $\eta$ sufficiently small\footnote{We stress that we do not need to have $\eta\to0$. Here $\eta$ is a fixed constant (independent of $q$ and $Y$).} :
\begin{itemize}
\item [$\bullet$] If there exists $j\in\llbracket1,s\rrbracket$ such that $|\sigma^{-1}(j)|=1$ then $\Sigma_\sigma(q)=0$. 
\item [$\bullet$] Otherwise,
$$|\Sigma_\sigma(q)|\ll Y^{(1+\eta)(\frac{\nu}{2}-\frac{s-\sigma_2}{3}) + \epsilon }$$
 for any $\epsilon >0$. 
\end{itemize}
Hence, if $\sigma_2<s$ (\textit{e.g.} we are not in the case where $\nu$ is even and $s=\nu/2$), then 
$$\frac{1}{Y^{\nu/2}}|\Sigma_\sigma(q)|\ll Y^{-1/3+\epsilon}$$
for $\eta$ small enough so this term becomes a negligible contribution to $\setE(\M_q^\nu)$ when $Y\to+\infty$.\\
\end{prop}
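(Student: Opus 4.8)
The plan is to reduce $\Sigma_\sigma(q)$ to a purely additive counting problem and then to exploit the Hecke structure of the coefficients $\hat f_0$. First I would pass from the congruence condition to a genuine equality of real numbers. The hypothesis $Y^{2^{\nu-2}+\delta}<C_\nu q$, with $C_\nu=(2\nu^{2^{\nu-1}})^{-1}$, ensures that $(\nu^2Y^{1+\eta})^{2^{\nu-2}}<q/2$ as soon as $\eta\le\delta/2^{\nu-2}$, so that Lemma \ref{lemme-moment} applies to every tuple $(m_1,\dots,m_\nu)$ occurring in $\Sigma_\sigma(q)$ (recall $m_i<Y^{1+\eta}$ and $\legendre{m_i}=1$). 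Taking absolute values and using $\sum_{\mathbf e}\delta_q\le 2^\nu\sum_{\mathbf e}\delta_0$ on this range, I obtain
\[
|\Sigma_\sigma(q)|\le 2^\nu\sum_{\mathbf e\in\{\pm1\}^\nu}\ \sum_{t_1<\dots<t_s}\ \sum_{m_i=r_i^2t_{\sigma(i)}}\ \prod_{i=1}^\nu\Big|\hat f_0(m_i)B\big(\tfrac{m_i}{Y}\big)\Big|\,\delta_0\Big(\sum_{i=1}^\nu e_i\sqrt{m_i}\Big),
\]
where $\sqrt{m_i}=r_i\sqrt{t_{\sigma(i)}}$ is the real square root. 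Since distinct squarefree integers have $\setQ$-linearly independent square roots, the single condition $\sum_i e_i\sqrt{m_i}=0$ decouples across the fibres of $\sigma$: one has $\delta_0(\sum_i e_i\sqrt{m_i})=\prod_{j=1}^s\delta_0\big(\sum_{i\in\sigma^{-1}(j)}e_ir_i\big)$.

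This factorization yields the first bullet at once: if some fibre $\sigma^{-1}(j_0)=\{i_0\}$ is a singleton, the corresponding factor is $\delta_0(e_{i_0}r_{i_0})$, which vanishes because $r_{i_0}\ge1$; hence $\Sigma_\sigma(q)=0$. For the second bullet I assume every fibre has size $\ge2$ and bound the right-hand side by dropping the ordering $t_1<\dots<t_s$, so that the variables attached to different fibres separate and the estimate factorizes into a product over the $s$ blocks. It then suffices to bound, for a block of size $k=|\sigma^{-1}(j)|$,
\[
T_k:=\sum_{\mathbf e\in\{\pm1\}^k}\ \sum_{t\ \mathrm{sqfree}}\ \sum_{\substack{r_1,\dots,r_k\ge1\\ \sum_i e_ir_i=0}}\ \prod_{i=1}^k\Big|\hat f_0(r_i^2t)B\big(\tfrac{r_i^2t}{Y}\big)\Big|.
\]
A block of size $k=2$ forces $r_1=r_2$, hence $m_1=m_2$, and collapses to the diagonal $\sum_m|\hat f_0(m)|^2B(m/Y)^2\ll Y^{1+\epsilon}$ by Lemma \ref{lemme-RS}; this is the exponent $k/2=1$ \emph{without} saving, accounting for the $\sigma_2$ blocks excluded from the $-\tfrac13$ term.

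The core is the bound $T_k\ll Y^{(1+\eta)(k/2-1/3)+\epsilon}$ for $k\ge3$, and here I would use that $f$, hence $f_0$, is a Hecke eigenform. Along a square tower the coefficients factorize multiplicatively, $\hat f_0(r^2t)=\hat f_0(t)\lambda_t(r)$, where $\lambda_t$ is the character-twisted Hecke function of the integral-weight Shimura lift and obeys the Ramanujan bound $|\lambda_t(r)|\ll r^\epsilon$. Pulling out the seed $\hat f_0(t)$, the constrained $\mathbf r$-sum is controlled, up to $Y^\epsilon$, by the number of solutions of $\sum_ie_ir_i=0$ with $r_i\ll\sqrt{Y^{1+\eta}/t}$, namely $\ll(Y^{1+\eta}/t)^{(k-1)/2+\epsilon}$, so that
\[
T_k\ll Y^{(1+\eta)(k-1)/2+\epsilon}\sum_{\substack{t\le Y^{1+\eta}\\ t\ \mathrm{sqfree}}}\frac{|\hat f_0(t)|^k}{t^{(k-1)/2}}.
\]
The remaining $t$-sum I would split: two factors $|\hat f_0(t)|^2$ feed the Rankin--Selberg average $\sum_{t\le T}|\hat f_0(t)|^2\ll T$ of Lemma \ref{lemme-RS}, while the other $k-2$ factors are bounded by the \emph{subconvex} squarefree bound $|\hat f_0(t)|\ll t^{1/6+\epsilon}$ from \eqref{bound}. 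A partial summation then contributes $\ll Y^{(1+\eta)/6+\epsilon}$ when $k=3$ and only $\ll Y^\epsilon$ when $k\ge4$, giving $T_3\ll Y^{(1+\eta)7/6+\epsilon}$ and $T_k\ll Y^{(1+\eta)(k-1)/2+\epsilon}$ for $k\ge4$; both are $\ll Y^{(1+\eta)(k/2-1/3)+\epsilon}$. Multiplying the blockwise bounds and using $\sum_jk_j=\nu$ together with $\#\{j:k_j\ge3\}=s-\sigma_2$ produces the claimed estimate, and dividing by $Y^{\nu/2}$ while taking $\eta$ small enough that $\eta(\nu/2-1/3)<\epsilon$ gives the final consequence $\ll Y^{-1/3+\epsilon}$ whenever $\sigma_2<s$.

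The genuinely delicate point, and the reason the exponent is $-\tfrac13$ rather than $-\tfrac12$, is the control of $\hat f_0$ along the sparse sequence $\{r^2t\}_r$: this cannot be extracted from the global Rankin--Selberg bound alone, and the Shimura factorization is exactly what makes it accessible. The size-$3$ block is the critical case, where the subconvex exponent $\alpha=1/6$ of \eqref{bound} meets the second moment over squarefree $t$ in an exactly balanced way and forces the bound $Y^{7/6}$; any progress toward the conjectural $\alpha=0$ would sharpen the saving from $1/3$ to $1/2$.
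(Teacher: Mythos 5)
Your argument is correct and follows essentially the same route as the paper: reduction of the congruence condition $\delta_q$ to the archimedean condition $\delta_0$ via Lemma \ref{lemme-moment} (with the same constraint $\eta\le\delta 2^{2-\nu}$), decoupling of $\sum_i e_i\sqrt{m_i}=0$ across the fibres of $\sigma$ by linear independence over $\setQ$ of square roots of distinct squarefree integers, vanishing for singleton fibres since $r_i\ge1$, and then a count in which size-$2$ blocks are handled by the Rankin--Selberg second moment (Lemma \ref{lemme-RS}), size-$3$ blocks by the subconvex bound \eqref{bound} with $\alpha=1/6$, and size-$\ge4$ blocks trivially, yielding the exponent $\frac{\nu}{2}-\frac{s-\sigma_2}{3}$. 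Your organization of the estimate as a product of block sums $T_k$ (after dropping the ordering of the $t_j$, legitimate since all terms are non-negative), and your diagonal collapse $r_1=r_2$, $m_1=m_2$ for size-$2$ blocks, are only cosmetic differences from the paper's joint treatment.

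The one point you should correct is the claimed exact factorization $\hat f_0(r^2t)=\hat f_0(t)\lambda_t(r)$ with $|\lambda_t(r)|\ll r^\epsilon$. This Shimura-type multiplicativity is what one gets for an eigenform of \emph{all} Hecke operators, but $f_0$ is the Fricke image of $f$ and is a priori an eigenform only for the operators $T_{p^2}$ with $p$ odd, not for $T_4$; the $2$-part of $r$ escapes your factorization. What is actually available (\cite[Lemma 3.3]{squarefree}, which the paper invokes as \eqref{boundcoeff}) is the two-term bound $|\hat f_0(r^2t)|\ll_\epsilon \left(|\hat f_0(t)|+|\hat f_\infty(t)|\right)r^\epsilon$. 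This costs nothing in your proof: carry $|\hat f_0(t)|+|\hat f_\infty(t)|$ through the $t$-sums of your $T_k$ for $k\ge3$, noting that Lemma \ref{lemme-RS} and \eqref{bound} hold at both cusps (since $f_0\in\calS_{\ell+1/2}$), and every estimate, hence the final bound and the concluding $Y^{-1/3+\epsilon}$ saving, survives unchanged.
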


\begin{proof}

 Take $\eta<\delta2^{2-\nu}$ so the condition of Lemma \ref{lemme-moment} holds for $Y^{1+\eta}$. By Lemma \ref{lemme-B} and \ref{lemme-moment}, we have
\begin{align*}
|\Sigma_\sigma(q)| &\ll \sum_{\tiny{\begin{array}{c}t_1<\ldots<t_s\\m_i=r_i^2t_{\sigma(i)}\end{array}}}
\prod_{i=1}^\nu|\hat f_0(m_i)|
\sum\limits_{\text{\boldmath $e$}\in\{\pm 1\}^\nu}\delta_0\left(\sum\limits_{i=1}^\nu e_i\sqrt m_i\right)\\
&= \sum\limits_{\text{\boldmath $e$}\in\{\pm 1\}^\nu} \sum_{\tiny{\begin{array}{c}t_1<\ldots<t_s \\m_i=r_i^2t_{\sigma(i)}\end{array}}}
\prod_{i=1}^\nu|\hat f_0(m_i)|\delta_0\left(\sum\limits_{j=1}^s \lambda_j(\text{\boldmath $e$},\text{\boldmath $r$})\sqrt t_j\right)
\end{align*}
with $\lambda_j(\text{\boldmath $e$},\text{\boldmath $r$}) = \sum\limits_{i\in\sigma^{-1}(j)}e_ir_i$. \\

The square roots of distinct squarefree integers are linearly independent over $\setQ$
% (see for exemple \cite{indepsqrt}) 
so 
$$\sum\limits_{j=1}^s \lambda_j(\text{\boldmath $e$},\text{\boldmath $r$})\sqrt t_j = 0\;\;\Longleftrightarrow \;\; \lambda_j(\text{\boldmath $e$},\text{\boldmath $r$})=\!\!\sum\limits_{i\in\sigma^{-1}(j)}\!\!e_ir_i = 0 \text{ for all } j\in\llbracket1,s\rrbracket.$$

If $|\sigma^{-1}(j)|=1$ for a certain $j\in\llbracket1,s\rrbracket$ then this never holds because the $r_i$ are positive which proves the first assertion. 

Otherwise for fixed $t$ and  for all $j$, we have $|\sigma^{-1}(j)|-1$ degrees of freedom for the choice of  $r_i\le \sqrt{Y^{1+\eta}/t_{\sigma(i)}}$. By \cite[Lemma 3.3]{squarefree}, since $f$ is an eigenform we have
\begin{equation}
\label{boundcoeff}
|\hat f_0(r^2t)|\ll_\epsilon |\hat f_0(t)| r^{\epsilon}+  |\hat f_\infty(t)| r^{\epsilon}
\end{equation}
for any $\epsilon >0$. Thus,

\begin{align*}
&\sum_{\tiny{\begin{array}{c}t_1<\ldots<t_s \\m_i=r_i^2t_{\sigma(i)}\end{array}}}
\prod_{i=1}^\nu|\hat f_0(m_i)|\delta_0\left(\sum\limits_{j=1}^s \lambda_j(e,r)\sqrt t_j \right)\\
&\phantom{xxxxxxxxx}\ll \sum_{t_1<\ldots< t_s} \sum_{\tiny{\begin{array}{c} r_i^2t_{\sigma(i)} <Y^{1+\eta}\\ \sum\limits_{i\in\sigma^{-1}(j)}e_ir_i = 0\end{array}}} \prod_{j=1}^s\prod_{i\in\sigma^{-1}(j)}(|\hat f_0(t_j)|+|\hat f_\infty(t_j)|)r_i^{\epsilon} \\
& \phantom{xxxxxxxxx}\ll Y^{(1+\eta)\epsilon s}\sum_{\text{\boldmath $t$}}\prod_{j=1}^s(|\hat f_0(t_j)|+|\hat f_\infty(t_j)|)^{|\sigma^{-1}(j)|} \left(\sum_{r^2t_j<Y^{1+\eta}}r^\epsilon\right)^{|\sigma^{-1}(j)|-1}
\end{align*}
since for every $j$ we fix the first $|\sigma^{-1}(j)|-1$ values of $r_i$ and we bound the last one (whose value is given by the linear condition) by $Y^{1+\eta}$. Then we get
\begin{align*}
&\sum_{\text{\boldmath $t$}}\prod_{j=1}^s(|\hat f_0(t_j)|+|\hat f_\infty(t_j)|)^{|\sigma^{-1}(j)|}\left(\sum_{r^2t_j<Y^{1+\eta}}r^\epsilon\right)^{|\sigma^{-1}(j)|-1}\\
&\phantom{cccccccccccccc}\ll \sum_{\text{\boldmath $t$}}\prod_{j=1}^s(|\hat f_0(t_j)|+|\hat f_\infty(t_j)|)^{|\sigma^{-1}(j)|} \left(\frac{Y^{1+\eta}}{t_j}\right)^{(1/2+\epsilon)(|\sigma^{-1}(j)|-1)}\\
&\phantom{cccccccccccccc}\ll Y^{(1+\eta)(1/2+\epsilon)( \nu -s)}\prod_{j=1}^s \sum_{1\le t<Y^{1+\eta}}(|\hat f_0(t)|+|\hat f_\infty(t)|)^{|\sigma^{-1}(j)|}t^{-(|\sigma^{-1}(j)|-1)/2}.
\end{align*}

We handle the last sums differently depending on the value of $|\sigma^{-1}(j)|$ (which is an integer $\ge2$). 

$\bullet$ If $|\sigma^{-1}(j)|=2$ then using $(a+b)^2\le 2(a^2+b^2)$ and Lemma \ref{lemme-RS},

\begin{align*}
\sum_{1\le t<Y^{1+\eta}}\frac{(|\hat f_0(t)|+|\hat f_\infty(t)|)^{2}}{\sqrt t}
&\ll \sum_{\mathfrak a\in\{0,\infty\}}\sum_{1\le t<Y^{1+\eta}}\frac{|\hat f_\mathfrak a(t)|^{2}}{\sqrt t}\\
&\ll Y^{(1+\eta)/2}.
\end{align*}

$\bullet$ If $|\sigma^{-1}(j)|=3$ then using \eqref{bound}, we have
\begin{align*}
\sum_{1\le t<Y^{1+\eta}}\frac{(|\hat f_0(t)|+|\hat f_\infty(t)|)^{3}}{t}&\ll Y^{(1+\eta)/6 + \epsilon}\sum_{1\le t<Y^{1+\eta}}\frac{(|\hat f_0(t)|+|\hat f_\infty(t)|)^{2}}{t}\\
&\ll Y^{(1+\eta)/6 + \epsilon} \sum_{\mathfrak a,\mathfrak b\in\{0,\infty\}}\left(\sum_{1\le t<Y^{1+\eta}}\frac{|\hat f_\mathfrak a(t)|^{2}}{t}\right)^{1/2}\left(\sum_{1\le t<Y^{1+\eta}}\frac{|\hat f_\mathfrak b(t)|^{2}}{t}\right)^{1/2}\\
&\ll Y^{(1+\eta)/6+\epsilon}.
\end{align*}

$\bullet$ If $|\sigma^{-1}(j)|\ge 4$ the sums are bounded uniformly in $q$ and $Y$. \\

Thus, we have
\begin{align*}
\Sigma_\sigma(q) &\ll Y^{(1+\eta)(\frac{\nu-s}{2}+\frac{\sigma_2}{2}+\frac{\sigma_3}{6})+\epsilon}\\
&\ll Y^{(1+\eta)(\frac{\nu}{2}-\frac{s-\sigma_2}{3})+\epsilon}
\end{align*}
since $\sigma_2+\sigma_3\le s$ and  for any $\epsilon>0$.
\end{proof}
\vskip 0.8 cm

\begin{rmq}\
\begin{itemize}
\item[$\bullet$] The second term in the right hand-side of \eqref{boundcoeff} comes from the fact that \emph{a priori} $f_0$ is an eigenform only
 for $T_{p^2}$ with $p$ odd. If we have assumed that it is also an eigenform for $T_4$ then we would simply have 
 $|\hat f_0(r^2t)|\ll_\epsilon |\hat f_0(t)|r^\epsilon$.
\item[$\bullet$] In the case of $\setE^-$, we just replace the $m_i$ by $\mu_pm_i$ in the indicator function (take $\mu_p$ positive) and notice that
$$\sum\limits_{i=1}^\nu e_i\sqrt{\mu_p m_i} = 0 \Longleftrightarrow \sum\limits_{i=1}^\nu e_i\sqrt m_i = 0$$
so the same result holds.
\end{itemize}
\end{rmq}

\vskip 0.5 cm \subsection{Odd moments}
We can now answer completely the case of the odd moment.

\begin{theo} 
\label{theo-moment-impair}
Let $\nu$ be an odd positive integer and $C_\nu=\big(2\nu^{2^{\nu-1}}\big)^{-1}$. Assume there exists $\delta > 0$ such that 
$$1 \le Y^{2^{\nu-2}+\delta}<C_\nu q$$
 then for any $\epsilon > 0$

\begin{equation}
\label{momentimpair}
\setE(E_{q}^\nu)= O\left(Y^{-1/3 +\epsilon}+\frac{Y^{\nu/2+\epsilon}}{p}\right)
 \end{equation}
 where the implicit constant only depends on $f, w, \nu, \epsilon$ and $\delta$.
 
 If we  assume that there exists $\delta > 0$ such that 
\begin{equation}
\label{secondgrowth}
1 \le Y^{2^{\nu-2}+\delta}<C_\nu q/p
\end{equation}
 then for any $\epsilon > 0$

\begin{equation}
\label{momentimpair2}
\setE(E_{q}^\nu)= O\left(Y^{-1/3 +\epsilon}+\frac{Y^{\nu/2+\epsilon}}{p^{3/2}}\right)
 \end{equation}
 where the implicit constant only depends on $f, w, \nu, \epsilon$ and $\delta$.
\end{theo}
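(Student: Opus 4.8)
The plan is to derive the theorem directly from Proposition \ref{prop-moment}, which reduces the computation of $\setE(E_q^\nu)$ to that of $\setE(\M_q^\nu)$, together with the structural decomposition already established in \eqref{plusmoins} and \eqref{eq}. Taking expectations in \eqref{moment} and using that $\epsilon_q^{-2\ell}$ is real of modulus $1$ and independent of $a$, I get $\setE(E_q^\nu)=(\epsilon_q^{-2\ell})^\nu\setE(\M_q^\nu)+O(Y^{\nu/2+\epsilon}p^{-3/2}+Y^{-A})$, so it suffices to bound $\setE(\M_q^\nu)=\tfrac12\setE^+(\M_q^\nu)+\tfrac12\setE^-(\M_q^\nu)$. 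By the remark following Proposition \ref{prop-sigmasigma} the two terms $\setE^\pm$ satisfy the same estimates, so I treat $\setE^+$ and invoke \eqref{eq} to reduce everything to bounding the quantities $\Sigma_\sigma(q)$ and $\Sigma_\sigma(q/p)$ for $s\in\llbracket1,\nu\rrbracket$ and $\sigma\in S(\nu,s)$, noting that both $(1-1/p)^{-1}$ and the number of pairs $(s,\sigma)$ are $O_\nu(1)$.

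The combinatorial heart of the argument, and the reason odd moments carry no Gaussian main term, is the observation that when $\nu$ is odd no surjection $\sigma\in S(\nu,s)$ can have all of its fibers of size exactly $2$: that would force $\nu=2s$ to be even. Consequently, in the only surviving case of Proposition \ref{prop-sigmasigma} (all fibers of size $\ge2$, since a fiber of size $1$ makes $\Sigma_\sigma(q)$ vanish) there is always a fiber of size $\ge3$, so $\sigma_2<s$ and hence $Y^{-\nu/2}|\Sigma_\sigma(q)|\ll Y^{-1/3+\epsilon}$ for every admissible $\sigma$. Summing over the $O_\nu(1)$ pairs $(s,\sigma)$ yields the contribution $O(Y^{-1/3+\epsilon})$ from the $\Sigma_\sigma(q)$ terms under the hypothesis $Y^{2^{\nu-2}+\delta}<C_\nu q$.

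It remains to control the $\tfrac1p\Sigma_\sigma(q/p)$ terms, and here the two hypotheses diverge. Under the weaker hypothesis I bound $\Sigma_\sigma(q/p)$ trivially: dropping the congruence via $\delta_{q/p}\le1$ and summing over $\mathbf e\in\{\pm1\}^\nu$, the reorganization \eqref{reorg} reassembles the full sum, so $\sum_{s,\sigma}|\Sigma_\sigma(q/p)|\ll 2^\nu\big(\sum_{m<Y^{1+\eta}}|\hat f_0(m)B(m/Y)|\big)^\nu\ll Y^{\nu+\epsilon}$ by Proposition \ref{prop-B}; dividing by $pY^{\nu/2}$ gives the term $Y^{\nu/2+\epsilon}/p$ of \eqref{momentimpair}, which also dominates the $Y^{\nu/2+\epsilon}p^{-3/2}$ error coming from Proposition \ref{prop-moment}. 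Under the stronger hypothesis \eqref{secondgrowth}, the modulus $q/p=p^{N-1}$ itself satisfies the requirement $Y^{2^{\nu-2}+\delta}<C_\nu(q/p)$ of Proposition \ref{prop-sigmasigma}, so I apply that proposition to $\Sigma_\sigma(q/p)$ exactly as to $\Sigma_\sigma(q)$, obtaining $Y^{-\nu/2}|\Sigma_\sigma(q/p)|\ll Y^{-1/3+\epsilon}$ and hence a contribution $O(Y^{-1/3+\epsilon}/p)$ absorbed into $Y^{-1/3+\epsilon}$; the only surviving power of $p$ is then the $Y^{\nu/2+\epsilon}p^{-3/2}$ error of Proposition \ref{prop-moment}, giving \eqref{momentimpair2}.

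Since all the genuinely analytic work is carried out in Proposition \ref{prop-sigmasigma}, I expect no serious obstacle: the theorem is essentially a corollary. The only points requiring care are the parity observation that kills the main term for odd $\nu$ (ensuring $\sigma_2<s$ everywhere among surviving $\sigma$) and the bookkeeping that distributes the errors $Y^{-1/3+\epsilon}$ and $Y^{\nu/2+\epsilon}/p$ (respectively $Y^{\nu/2+\epsilon}p^{-3/2}$) correctly between the $\Sigma_\sigma(q)$ and $\Sigma_\sigma(q/p)$ pieces under each of the two growth hypotheses.
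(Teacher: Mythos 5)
Your proposal is correct and follows essentially the same route as the paper: reduce via Proposition \ref{prop-moment} and the decomposition \eqref{plusmoins}, \eqref{eq}, apply Proposition \ref{prop-sigmasigma} to the $\Sigma_\sigma(q)$ terms (where oddness of $\nu$ forces $\sigma_2<s$ for every surviving $\sigma$, killing any main term), and treat $\Sigma_\sigma(q/p)$ either trivially through Proposition \ref{prop-B} (first hypothesis, giving $Y^{\nu/2+\epsilon}/p$) or through Proposition \ref{prop-sigmasigma} again (second hypothesis, leaving only the $Y^{\nu/2+\epsilon}p^{-3/2}$ error from Proposition \ref{prop-moment}). Your write-up is in fact more explicit than the paper's terse proof, notably in spelling out the parity argument and the bookkeeping of the error terms.
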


\begin{proof}
In the first case, the growth  condition on $Y$ enables us to apply Proposition \ref{prop-sigmasigma} for $\Sigma_\sigma(q)$ in \eqref{eq} but we can only apply Proposition \ref{prop-B} to deal with $\Sigma_\sigma(q/p)$  so far. 

Hence we get
$$\setE^+(\M_{q}^\nu)\ll Y^{-1/3+\epsilon} + \frac{Y^{\nu/2+\epsilon}}{p}$$
and the same holds for $\setE^-$ as mentioned. By Proposition \ref{prop-moment}, we reach the conclusion.\\

In the second case, we can apply Proposition \ref{prop-sigmasigma} for both $\Sigma_\sigma(q)$ and $\Sigma_\sigma(q/p)$ in \eqref{eq}. Thus we have 
\begin{align*}
\setE^\pm(\M_{q}^\nu)\ll Y^{-1/3+\epsilon}
\end{align*}
and by Proposition \ref{prop-moment}, we reach the conclusion.

\end{proof}

\begin{rmq}
One can see that the second growth condition  \eqref{secondgrowth} is relevant only for $q=p^N$ and $N>1$.
\end{rmq}

\vskip 0.5 cm \subsection{Even moments}

Next assume that $\nu$ is even. We have only shown that the main term of \eqref{eq1} comes from $\nu$-tuples $m_i=r_i^2t_i$ such that $|\{t_i, i\in\llbracket1,\nu\rrbracket\}|=\nu/2$ so we have to be more precise. Also, we let $S^{*}(\nu,\nu/2)$ denote
the set of all $\sigma\in S(\nu,\nu/2)$ such that $\sigma_2=\nu/2$. 

\begin{prop}
\label{prop-sigmapair}
Let $\nu$ be an even integer and $\sigma\in S^{*}(\nu,\nu/2)$. Assume  there exists $\delta>0$ such that $Y\ll q^{2^{2-\nu}-\delta}$. Then for $\eta$ sufficiently small and $q$ large enough
$$\Sigma_\sigma(q) = 2^{\nu/2} \underset{\tiny{\begin{array}{c}t_1<\ldots<t_{\nu/2} \\m_i=r_i^2t_{i}\end{array}}}{{\sum}^{\:\Box}}\prod_{i=1}^{\nu/2}\hat f_0(m_i)^2B\left(\frac{m_i}{Y}\right)^2 $$

\end{prop}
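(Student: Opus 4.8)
The plan is to evaluate the inner character sum in $\Sigma_\sigma(q)$ exactly and then collapse the resulting diagonal. Since $\sigma\in S^{*}(\nu,\nu/2)$, every fibre has exactly two elements; write $\sigma^{-1}(j)=\{a_j,b_j\}$ for $j\in\llbracket1,\nu/2\rrbracket$ and set $s_j:=\sqrt{t_j}^q$ and $\lambda_j(\mathbf e,\mathbf r):=e_{a_j}r_{a_j}+e_{b_j}r_{b_j}$. Because $\legendre{m_i}=1$ in the $\Box$-sum, each $t_j$ is coprime to $p$ and $\sqrt{m_i}^q\equiv\pm r_i s_{\sigma(i)}\,[q]$; as we sum over all sign vectors $\mathbf e$, these internal signs are harmless after a bijective relabelling of $\mathbf e$, so the inner sum becomes
$$N_q(\mathbf m):=\sum_{\mathbf e\in\{\pm1\}^\nu}\delta_q\Big(\sum_{i=1}^\nu e_i\sqrt{m_i}^q\Big)=\sum_{\mathbf e\in\{\pm1\}^\nu}\delta_q\Big(\sum_{j=1}^{\nu/2}\lambda_j(\mathbf e,\mathbf r)\,s_j\Big).$$

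The crux — and the step I expect to be the main obstacle — is to show that under the hypothesis the congruence $\sum_j\lambda_j s_j\equiv0\,[q]$ forces $\lambda_j=0$ for every $j$, i.e. that $N_q(\mathbf m)$ equals the archimedean count $N_0(\mathbf m):=\sum_{\mathbf e}\delta_0\big(\sum_i e_i\sqrt{m_i}\big)$. This cannot be read off from the congruence directly, since each $s_j$ may be as large as $q/2$ so that $\sum_j\lambda_j s_j$ is not confined to a bounded window. To get around this I would symmetrise over the signs of the square roots and consider the norm
$$P(\boldsymbol\lambda,\mathbf t):=\prod_{\boldsymbol\epsilon\in\{\pm1\}^{\nu/2}}\Big(\sum_{j=1}^{\nu/2}\epsilon_j\lambda_j\sqrt{t_j}\Big),$$
which, being invariant under every $\sqrt{t_j}\mapsto-\sqrt{t_j}$, is a polynomial in the $t_j$ and $\lambda_j$ with integer coefficients (the exact analogue, in the present variables, of the polynomials $Q_r$ of Section 4). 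Reducing modulo $q$ via $s_j^2\equiv t_j\,[q]$, the factor indexed by $\boldsymbol\epsilon=(1,\dots,1)$ equals $\sum_j\lambda_j s_j\equiv0\,[q]$, hence $P(\boldsymbol\lambda,\mathbf t)\equiv0\,[q]$. On the other hand $|\lambda_j|\sqrt{t_j}\le(r_{a_j}+r_{b_j})\sqrt{t_j}\le2\sqrt{Y^{1+\eta}}$ gives $|P(\boldsymbol\lambda,\mathbf t)|\le(\nu\sqrt{Y^{1+\eta}})^{2^{\nu/2}}$, and since $2^{\nu/2-1}\le2^{\nu-2}$ this is $<q/2$ once $\eta$ is small enough in terms of $\delta$ and $q$ is large — reusing the constant $C_\nu=\big(2\nu^{2^{\nu-1}}\big)^{-1}$ exactly as in Lemma \ref{lemme-moment}. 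Therefore $P(\boldsymbol\lambda,\mathbf t)=0$ in $\setZ$, one of its factors vanishes, and the linear independence over $\setQ$ of the $\sqrt{t_j}$ for distinct squarefree $t_j$ forces $\lambda_j=0$ for all $j$; the converse being trivial, $N_q(\mathbf m)=N_0(\mathbf m)$.

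It then remains to evaluate $N_0$ and perform the collapse. By the same linear independence, $N_0(\mathbf m)$ counts the sign vectors for which $\lambda_j=0$ for all $j$; as the $r_i$ are positive, $\lambda_j=e_{a_j}r_{a_j}+e_{b_j}r_{b_j}=0$ holds precisely when $r_{a_j}=r_{b_j}$ and $e_{a_j}=-e_{b_j}$, which for each $j$ contributes a factor $2$ when $r_{a_j}=r_{b_j}$ and nothing otherwise. Hence $N_0(\mathbf m)=2^{\nu/2}$ if $r_{a_j}=r_{b_j}$ for all $j$ and $N_0(\mathbf m)=0$ in every other case. Substituting $N_q=N_0$ into $\Sigma_\sigma(q)$, only the diagonal $r_{a_j}=r_{b_j}=:r_j$ survives; putting $m_j:=r_j^2t_j$ we have $m_{a_j}=m_{b_j}=m_j$, so $\prod_{i=1}^\nu\hat f_0(m_i)B(m_i/Y)=\prod_{j=1}^{\nu/2}\hat f_0(m_j)^2B(m_j/Y)^2$ and the condition $\legendre{m_i}=1$ reduces to $\legendre{m_j}=1$. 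This yields exactly
$$\Sigma_\sigma(q)=2^{\nu/2}\underset{\tiny{\begin{array}{c}t_1<\ldots<t_{\nu/2}\\m_j=r_j^2t_j\end{array}}}{{\sum}^{\:\Box}}\prod_{j=1}^{\nu/2}\hat f_0(m_j)^2B\Big(\frac{m_j}{Y}\Big)^2,$$
which is the claimed identity.
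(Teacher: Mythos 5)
Your proof is correct and follows essentially the same route as the paper: pair up the indices so the congruence becomes $\sum_j \lambda_j\sqrt{t_j}^q\equiv 0\,[q]$, upgrade it to an exact equation over $\setZ$ by a norm-polynomial plus size-bound argument, and conclude via the linear independence over $\setQ$ of square roots of distinct squarefree integers that only the diagonal $r_{a_j}=r_{b_j}$, $e_{a_j}=-e_{b_j}$ survives, each pair contributing a factor $2$. The only (cosmetic) difference is that you re-derive the mechanism of Lemma \ref{lemme-moment} inline through the full norm $P$ over all $2^{\nu/2}$ sign vectors, whereas the paper invokes that lemma directly applied to the integers $(r_i+e_i'r_i')^2t_i$.
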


\begin{proof}
Let $t_1,\ldots,t_{\nu/2}$ be any distinct squarefree integers and let $m_i=r_i^2t_{\sigma(i)}$ as previously. Changing the notation a bit, we may assume that $m_{2i-1}=r_i^2t_i$ and $m_{2i}=r_i'^2t_i$ for $i\in\llbracket1,\nu/2\rrbracket$. Then

$$\sum\limits_{\text{\boldmath $e$}\in\{\pm 1\}^\nu}\delta_q\left(\sum\limits_{i=1}^\nu e_i{\sqrt m_i}^q\right) =
\sum_{\text{\boldmath $e'$}\in\{\pm 1\}^{\nu/2}}\sum_{\text{\boldmath $e$}\in\{\pm 1\}^{\nu/2}}\delta_q\left(\sum\limits_{i=1}^{\nu/2}e_i (r_i+e_{i}'r_i'){\sqrt t_i}^q\right).$$

Fix \text{\boldmath $e$} and apply Lemma \ref{lemme-moment} to $(r_i+e_{i}'r_i')^2t_i$, then for $q$ large enough
$$\sum_{\text{\boldmath $e$}\in\{\pm 1\}^{\nu/2}}\delta_q\left(\sum\limits_{i=1}^{\nu/2}e_i (r_i+e_{i}'r_i'){\sqrt t_i}^q \right)
\le 2^{\nu/2} \sum_{\text{\boldmath $e$}\in\{\pm 1\}^{\nu/2}}\delta_0\left(\sum\limits_{i=1}^{\nu/2}e_i (r_i+e_{i}'r_i'){\sqrt t_i}\right).$$

The right-hand side vanishes except if $\sum\limits_{i=1}^{\nu/2} e_i(r_i+e_{i}'r_i'){\sqrt t_i} = 0$ for some $e$. In this case $r_i+e_{i}'r_i' = 0$ for every $i$ since the $t_i$ are squarefree and distinct. This leads to $r_i=r_i'$ and $e_{i}'=-1$ for every $i$ (because $r_i, r_i'\ge1$). Then we have $m_{2i-1}=m_{2i}$ and in fact

$$\sum\limits_{\text{\boldmath $e$}\in\{\pm 1\}^\nu}\delta_q\left(\sum\limits_{i=1}^\nu e_i{\sqrt m_i}^q\right) = 2^{\nu/2}$$
so we have the desired conclusion.

\end{proof}

We now reach the conclusion for even moments.

\begin{theo} 
\label{theo-momentpair}
Let $\nu$ be an even positive integer and $C_\nu=\big(2\nu^{2^{\nu-1}}\big)^{-1}$. Assume there exists $\delta > 0$ such that 
$$1 \le Y^{2^{\nu-2}+\delta}<C_\nu q$$
 then for any $0<\eta<\delta2^{2-\nu}$ and any $\epsilon > 0$
 
\begin{equation}
\label{momentpair}
\setE^\pm(E_{q}^\nu)=  \frac{\nu !}{(\nu/2)!}\left(\frac{1}{Y}
 \sum_{\tiny{\begin{array}{c}1\le m <Y^{1+\eta}\\ \left(\frac{m}{p}\right)=\pm1\end{array}}}
 \hat f_0(m)^2B^2\left(\frac{m}{Y}\right)\right)^{\nu/2} + O\left(Y^{-1/3 +\epsilon}+\frac{Y^{\nu/2+\epsilon}}{p}\right)
 \end{equation}
where the implicit constant only depends on $f, w, \nu, \epsilon$ and $\eta$.

If we assume that there exists $\delta > 0$ such that 
$$1 \le Y^{2^{\nu-2}+\delta}<C_\nu q/p$$
 then the remainder term in \eqref{momentpair} is $O\left(Y^{-1/3 +\epsilon}+\frac{Y^{\nu/2+\epsilon}}{p^{3/2}}\right)$ where the implicit constant only depends on $f, w, \nu, \epsilon$ and $\eta$.
\end{theo}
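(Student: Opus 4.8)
The plan is to reduce everything to the quantity $\setE^\pm(\M_q^\nu)$ and then read off the main term from the combinatorial decomposition \eqref{eq}. First I would invoke Proposition \ref{prop-moment}: since $\nu$ is even one has $\epsilon_q^{-2\ell\nu}=(\epsilon_q^2)^{-\ell\nu}=1$, so $E_q(a)^\nu=\M_q(a)^\nu+O(Y^{\nu/2+\epsilon}p^{-3/2}+Y^{-A})$ uniformly in $a$, and averaging gives $\setE^\pm(E_q^\nu)=\setE^\pm(\M_q^\nu)+O(Y^{\nu/2+\epsilon}p^{-3/2})$ (the $Y^{-A}$ being harmless). It then remains to analyse $\setE^+(\M_q^\nu)$ through \eqref{eq}, the case $\setE^-$ being identical after replacing each $m_i$ by $\mu_p m_i$ as in the remark following Proposition \ref{prop-sigmasigma}, which only changes the residue condition in the main term from $(m/p)=1$ to $(m/p)=-1$.

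Next I would isolate the surviving $\sigma$. By Proposition \ref{prop-sigmasigma}, every $\sigma$ possessing a fibre of size one contributes $\Sigma_\sigma(q)=0$, while every $\sigma$ with $\sigma_2<s$ contributes $Y^{-\nu/2}|\Sigma_\sigma(q)|\ll Y^{-1/3+\epsilon}$. Since the only $\sigma$ with all fibres of size $\ge 2$ and $\sigma_2=s$ are those in $S^*(\nu,\nu/2)$ (which forces $s=\nu/2$), the main term comes solely from $S^*(\nu,\nu/2)$. For such $\sigma$, Proposition \ref{prop-sigmapair} gives $\Sigma_\sigma(q)=2^{\nu/2}T$, where $T=\sum^\Box_{t_1<\dots<t_{\nu/2},\,m_i=r_i^2 t_i}\prod_{i=1}^{\nu/2}\hat f_0(m_i)^2 B^2(m_i/Y)$ is independent of $\sigma$. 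A short count shows $|S^*(\nu,\nu/2)|=\nu!/2^{\nu/2}$ (perfect matchings of $\llbracket1,\nu\rrbracket$ with labelled pairs), whence $\sum_{\sigma\in S^*}\Sigma_\sigma(q)=\nu!\,T$.

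The remaining point is to convert $T$ into the clean power appearing in \eqref{momentpair}. Writing $\Sigma=\sum_{m<Y^{1+\eta},(m/p)=1}\hat f_0(m)^2 B^2(m/Y)$, I would expand $\Sigma^{\nu/2}$ over ordered $(\nu/2)$-tuples and split off the diagonal tuples in which two indices share the same squarefree part: the tuples with pairwise distinct squarefree parts reproduce $(\nu/2)!\,T$, so $T=\frac{1}{(\nu/2)!}\big(\Sigma^{\nu/2}-\Sigma^{\nu/2}_{\mathrm{diag}}\big)$. The diagonal contribution is bounded, after using \eqref{boundcoeff} to pull out the squarefree part and summing the extra $r$-variable, by $\ll Y^{\nu/2-2+\epsilon}\sum_{t<Y^{1+\eta}}(\hat f_0(t)^4+\hat f_\infty(t)^4)Y^{1+\epsilon}t^{-1}$; feeding in the squarefree bound \eqref{bound} with $\alpha=1/6$ and Lemma \ref{lemme-RS} yields $\Sigma^{\nu/2}_{\mathrm{diag}}\ll Y^{\nu/2-1/3+\epsilon}$. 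Hence $Y^{-\nu/2}\nu!\,T=\frac{\nu!}{(\nu/2)!}(\Sigma/Y)^{\nu/2}+O(Y^{-1/3+\epsilon})$, which is exactly the claimed main term (this cube-root saving is the same arithmetic phenomenon as in Proposition \ref{prop-sigmasigma}).

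Finally I would treat the $-\tfrac1p\Sigma_\sigma(q/p)$ terms and the factor $(1-1/p)^{-1}$ in \eqref{eq}, distinguishing the two growth regimes. In the first regime one can only control $\Sigma_\sigma(q/p)$ trivially: bounding the $\delta$-sum by $2^\nu$ and applying Proposition \ref{prop-B} gives $|\Sigma_\sigma(q/p)|\ll Y^{\nu+\epsilon}$, so these terms and the $(1-1/p)^{-1}=1+O(1/p)$ correction to the main term both contribute $O(Y^{\nu/2+\epsilon}/p)$, which dominates the $Y^{\nu/2+\epsilon}p^{-3/2}$ left over from Proposition \ref{prop-moment}; this yields \eqref{momentpair}. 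In the second regime the stronger hypothesis lets me apply Propositions \ref{prop-sigmasigma} and \ref{prop-sigmapair} with modulus $q/p$ as well: for $\sigma\in S^*$ one again gets $\Sigma_\sigma(q/p)=2^{\nu/2}T$, so $\Sigma_\sigma(q)-\tfrac1p\Sigma_\sigma(q/p)=2^{\nu/2}T(1-1/p)$ and the factor $(1-1/p)^{-1}$ cancels exactly, leaving no $1/p$ loss, while the $\sigma\notin S^*$ pieces stay $O(Y^{-1/3+\epsilon})$; the dominant error is then the $Y^{\nu/2+\epsilon}p^{-3/2}$ inherited from Proposition \ref{prop-moment}. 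The main obstacle is the bookkeeping of this last step together with the diagonal estimate: one must check that the combinatorial multiplicities collapse to $\nu!/(\nu/2)!$ exactly and that every off-diagonal or lower-$s$ term is genuinely a factor $Y^{-1/3}$ smaller, uniformly in $q$.
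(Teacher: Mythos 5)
Your proof is correct and follows essentially the same route as the paper: reduction to $\M_q$ via Proposition \ref{prop-moment} (using that $\epsilon_q^{-2\ell\nu}=1$ for even $\nu$), the decomposition \eqref{eq}, Propositions \ref{prop-sigmasigma} and \ref{prop-sigmapair} to isolate the $S^*(\nu,\nu/2)$ contribution with multiplicity $|S^*(\nu,\nu/2)|=\nu!/2^{\nu/2}$, and then the two error regimes handled exactly as in the paper --- trivial bounding of $\Sigma_\sigma(q/p)$ via Proposition \ref{prop-B} giving $O(Y^{\nu/2+\epsilon}/p)$ in the weaker regime, versus the exact cancellation of $(1-1/p)^{-1}$ against $\Sigma_\sigma(q)-\tfrac1p\Sigma_\sigma(q/p)=2^{\nu/2}T(1-1/p)$ in the stronger one. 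The only cosmetic difference is that you bound the diagonal tuples (coinciding squarefree parts) directly via \eqref{boundcoeff}, \eqref{bound} and Lemma \ref{lemme-RS}, where the paper disposes of them by appealing to Proposition \ref{prop-sigmasigma}; the underlying estimate is the same.
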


\begin{proof}
In the second case, applying twice propositions \ref{prop-sigmasigma} and \ref{prop-sigmapair} for both $\Sigma_\sigma(q)$ and $\Sigma_\sigma(q/p)$ in \eqref{eq}, we have

\begin{align*}
\setE^+(\M_{q}^\nu)=&\;\frac{(1-1/p)^{-1}}{Y^{\nu/2}}\sum_{\sigma\in S^{*}(\nu,\nu/2)} 2^{\nu/2} \underset{\tiny{\begin{array}{c}t_1<\ldots<t_{\nu/2} \\m_i=r_i^2t_{i}\end{array}}}{{\sum}^{\:\Box}}\prod_{i=1}^{\nu/2}\hat f_0(m_i)^2B\left(\frac{m_i}{Y}\right)^2\\
&\phantom{zzz} -\frac{(1-1/p)^{-1}}{pY^{\nu/2}}\sum_{\sigma\in S^{*}(\nu,\nu/2)} 2^{\nu/2} \underset{\tiny{\begin{array}{c}t_1<\ldots<t_{\nu/2} \\m_i=r_i^2t_{i}\end{array}}}{{\sum}^{\:\Box}}\prod_{i=1}^{\nu/2}\hat f_0(m_i)^2B\left(\frac{m_i}{Y}\right)^2 +O\left(Y^{-1/3 + \epsilon}\right)
\end{align*}
and since the quantity in the product is independent of how we enumerate the coefficients $t_i$, we get 
\begin{align*}
Y^{\nu/2}\setE^+(\M_{q}^\nu)= & \;2^{\nu/2}  \frac{|S^{*}(\nu,\nu/2)|}{(\nu/2)!}\underset{\tiny{\begin{array}{c}t_1,\ldots,t_{\nu/2} \text{ distinct} \\m_i=r_i^2t_{i}\end{array}}}{{\sum}^{\:\Box}}\prod_{i=1}^{\nu/2}\hat f_0(m_i)^2B\left(\frac{m_i}{Y}\right)^2 +O\left(Y^{-1/3 + \epsilon}\right)\\
= & \; \frac{\nu !}{(\nu/2)!} \underset{\tiny{\begin{array}{c}t_1,\ldots,t_{\nu/2}  \\m_i=r_i^2t_{i}\end{array}}}{{\sum}^{\:\Box}}\prod_{i=1}^{\nu/2}\hat f_0(m_i)^2B\left(\frac{m_i}{Y}\right)^2 +O\left(Y^{-1/3 + \epsilon}\right)
\end{align*}
since the added terms are negligible by Proposition \ref{prop-sigmasigma}. We get the analogue result for $\setE^-$ (where the sum is taken over non-squares mod $p$) and this leads to the conclusion. 

It is essentialy the same in the first case  since we just have to apply Propositions \ref{prop-sigmasigma} and \ref{prop-sigmapair} for $\Sigma_\sigma(q)$ only and use again that
$$\sum_{m_1,\ldots,m_\nu}\prod_{i=1}^\nu |\hat  f_0(m_i)B\left(\frac{m_i}{Y}\right)|\ll Y^{\nu+\epsilon}.$$

\end{proof}

 \vskip 1 cm \section{Variance}
 \label{section6}
 
 The goal of this section is to give an asymptotic formula for the main term in \eqref{momentpair}, which we can view essentially as the variance of $E_q$. We will distinguish two aspects of convergence. The first one,  when $p\to+\infty$, will lead us to the proof of Corollary \ref{cor-horizontal}. The second one, when $p$ is fixed, will give only a partial result on the sum $S_q$.\\
 
 First note that by a summation by parts, Lemma \ref{lemme-RS} and Lemma \ref{lemme-B}, 

\begin{equation}
\label{YZ}
 \frac{1}{Y} \vline\sum_{m \ge Z} \hat f_0(m)^2B^2\left(\frac{m}{Y}\right)\vline\ll  \left(\frac{Y}{Z}\right)^A
 \end{equation}
for all $A>1$. Therefore, we want to control the following quantity
 
 \begin{equation}
 \label{var}
 \frac{1}{Y} \sum_{\tiny{\begin{array}{c}1\le m < Z\\ \left(\frac{m}{p}\right)=\pm1\end{array}}}
 \hat f_0(m)^2B^2\left(\frac{m}{Y}\right)
 \end{equation}
 when $p$ and $Z$ go to infinity in certain range or when $p$ is fixed and $Z\to+\infty$.
 
 \vskip 0.5 cm \subsection{Variance when $p$ goes to infinity}
 
 We now prove Corollary \ref{cor-horizontal}.  If we omit the condition $ \left(\frac{m}{p}\right)=\pm1$ in the sum in \eqref{var}, using the Mellin inversion formula, Proposition \ref{prop-RS} and Lemma \ref{lemme-plancherel}, we get that the even moments converge in a certain range to 
 \begin{equation}
 \label{variance}
  \frac{\nu !}{(\nu/2)!}\left(\frac{(4\pi)^{\ell+1/2}}{\Gamma(\ell+1/2)}\langle f_0,f_0\rangle ||w||_2^2\right)^{\nu/2}
  \end{equation}
 which is the moment of order $\nu$ of the central gaussian distribution with variance  $$2\frac{(4\pi)^{\ell+1/2}}{\Gamma(\ell+1/2)}\langle f_0,f_0\rangle ||w||_2^2.$$
 
 Otherwise, the presence of a Legendre symbol complicates our task since it twists the sum with a conductor of size $p$, possibly of size much bigger than $Y$ in a certain range (as in \cite{FGKM} and \cite{KR}). \\
 
 Nonetheless, we will show that assuming a more restrictive growth condition on $Z$ and $p$, there exists infinitely many primes $p$ such that 
 
 \begin{equation}
 \label{condileg}
 \legendre{m}=1 \text{ \;\;for every } 1\le m<Z.
 \end{equation}

 \begin{prop}
 \label{prop-Nx}
 Let $Z : [1,+\infty[\to \setR_{\ge0}$ be an unbounded increasing function. We put
 $$N_x=\:\vline\left\{p\le x\:\vline\: \legendre{m}=1 \text{ for all } 1\le m \le Z(p)\right\}\vline.$$
 Then $N_x \underset{x\to +\infty}{\longrightarrow}+\infty$ as long as :
 
 \begin{enumerate}
 \item  $Z \ll \log \log x$ unconditionally,
 \item $Z\le c \sqrt{\log x}$ with $c$ strictly less than an absolute constant if we assume that the Dirichlet $L$-functions $L(s,\chi)$ ($\chi$ real character) do not have a Siegel zero,
 \item $Z\le c\log x$ with $c<1/4$ assuming the Riemann hypothesis for these $L$-functions. 
 \end{enumerate} 
 
 \end{prop}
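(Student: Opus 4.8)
The plan is to first make the threshold uniform in the primes being counted. Since $Z$ is increasing, every $p\le x$ satisfies $Z(p)\le Z(x)$, so the event ``$\legendre{m}=1$ for all $m\le Z(x)$'' implies ``$\legendre{m}=1$ for all $m\le Z(p)$''; hence $N_x\ge M_x$, where $M_x$ counts the primes $p\le x$ for which every integer $m\le Z(x)$ is a square modulo $p$. It therefore suffices to prove $M_x\to+\infty$. Because $\legendre{\cdot}$ is completely multiplicative, the condition defining $M_x$ is equivalent, for $p>Z(x)$, to $\legendre{\ell}=1$ for every \emph{prime} $\ell\le Z(x)$, so only $\pi(Z(x))$ genuine constraints remain.

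I would then detect these constraints by opening up the indicator function. For $p>Z(x)$ one has $\legendre{\ell}\in\{\pm1\}$, so
$$\prod_{\ell\le Z(x)}\frac{1+\legendre{\ell}}{2}=2^{-\pi(Z(x))}\sum_{d\mid P}\legendre{d},\qquad P:=\prod_{\ell\le Z(x)}\ell,$$
the sum running over squarefree $d\mid P$. By quadratic reciprocity each map $p\mapsto\legendre{d}$ with $d>1$ is a non-principal real Dirichlet character $\chi_d$ to a modulus $q_d\mid 4P$, while the term $d=1$ gives the main term. Summing over $p\le x$ yields
$$M_x=2^{-\pi(Z(x))}\pi(x)+O\Big(2^{-\pi(Z(x))}\sum_{1<d\mid P}\Big|\sum_{p\le x}\chi_d(p)\Big|\Big)+O(\pi(Z(x))).$$
Since there are $2^{\pi(Z(x))}$ divisors $d$, the error collapses to $O\big(\max_{d>1}\big|\sum_{p\le x}\chi_d(p)\big|\big)$, and the main term tends to infinity as soon as $\pi(Z(x))\log 2=o(\log x)$, which holds in all three regimes.

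The heart of the matter is then a uniform bound for the prime character sums $\sum_{p\le x}\chi_d(p)$ over the growing family of real characters of conductor up to $Q:=4P=\exp(O(Z(x)))$, and this is precisely where the three hypotheses enter. Unconditionally, the Siegel--Walfisz theorem bounds these sums by $x\exp(-c\sqrt{\log x})$ for conductors up to any fixed power of $\log x$, which forces $Q\le(\log x)^{O(1)}$, i.e. $Z\ll\log\log x$, giving (1); here the main term has size $x(\log x)^{-O(1)}$ and dominates. Assuming no Siegel zero, the Landau--Page zero-free region is valid for \emph{all} real characters and yields the same square-root-exponential saving for conductors up to $\exp(c'\sqrt{\log x})$, i.e. $Z\le c\sqrt{\log x}$ with $c$ below an absolute constant fixed by the zero-free region; the main term then decays only like $\exp(-o(\sqrt{\log x}))$ and still dominates the error, giving (2). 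Under the Riemann hypothesis for the $L(s,\chi_d)$ one has $\sum_{p\le x}\chi_d(p)\ll\sqrt x\,(\log q_dx)^2\ll\sqrt x(\log x)^2$ throughout the range $Z\le c\log x$, and since the main term is of size $x^{1-o(1)}$ it dominates this $x^{1/2+o(1)}$ error, so the choice $c<1/4$ is amply admissible (in fact any fixed $c$ works). In each case one checks the collapsed error is negligible against the main term, whence $M_x\to+\infty$.

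The main obstacle is the possible existence of an exceptional (Siegel) zero for one of the real characters $\chi_d$: this is exactly the phenomenon that confines the unconditional argument to the range $Z\ll\log\log x$, Siegel's bound being ineffective and valid only for conductors of polylogarithmic size, and it is precisely this obstruction that the two conditional hypotheses remove, thereby extending the admissible range of $Z$. The only other point requiring care is the uniformity of the prime-counting estimates across the entire family $\{\chi_d : d\mid P\}$ of cardinality $2^{\pi(Z(x))}$, which is handled cleanly by the collapse of the $2^{\pm\pi(Z(x))}$ factors noted above.
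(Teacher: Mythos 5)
Your proof is correct, and its skeleton is the same as the paper's: write the indicator of ``every $m\le Z$ is a square mod $p$'' as $2^{-\pi(Z)}\sum_{d\mid P}\legendre{d}$ with $P$ the product of the primes up to $Z$, observe that each $d>1$ gives a non-principal real character of conductor $e^{O(Z)}$, and bound the resulting prime sums by Siegel--Walfisz, the no-Siegel-zero hypothesis, or GRH (the paper's Theorem \ref{theo-SW} combined with Lemma \ref{lemme-pipsi}). The genuine difference is your preliminary reduction $N_x\ge M_x$ using the monotonicity of $Z$, which makes the truncation point uniform in $p$. The paper keeps the $p$-dependent threshold $Z(p)$, so the weight $1/g(p)=2^{-\pi(Z(p))}$ cannot be pulled out of the sum over $p$ and is handled by partial summation; this costs an extra factor $g(x)=2^{\pi(Z(x))}$, giving the error $O\bigl(g(x)\max_{I,\,t\le x}|\pi_I(t)|\bigr)$ against a main term $\pi(x)/g(x)$. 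In your version the constant prefactor $2^{-\pi(Z(x))}$ cancels the divisor count exactly, leaving the error $O\bigl(\max_{d>1}\bigl|\sum_{p\le x}\chi_d(p)\bigr|\bigr)$ with no exponential loss. This is what makes your closing remark in case (3) correct and slightly stronger than the paper: the paper's constraint $c<1/4$ comes precisely from its $g(x)^2$ loss together with the crude bound $\pi(Z)\le Z$ (one needs $x^{2c}\cdot x^{1/2+\epsilon}=o(x)$ there), whereas in your argument the main term is $x^{1-o(1)}$ (since $\pi(Z(x))=o(\log x)$ by the prime number theorem) against an error of size $x^{1/2+o(1)}$, so any fixed $c$ is admissible. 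The same refinement explains why your case (2) is constrained only by the conductor range of the zero-free region and not by the main-term/error comparison. In short: same character-expansion method and same analytic inputs, but your uniformization step removes the Abel summation and yields cleaner, marginally stronger conclusions.
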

 
This is a concequence of the Siegel-Walfisz theorem for which we give the following version taken from \cite[Theorem 11.16]{MV}. Point (3) of Proposition \ref{prop-Nx} is deduced from the proof of \cite[Theorem 11.16]{MV}.
 
 \begin{theo}
 \label{theo-SW}
 Let $\chi$ be a non principal real Dirichlet character modulo $q$ and $\psi_\chi(x) = \sum\limits_{p\le x}\chi(p)\Lambda(p)$ where $\Lambda$ is the von Mangoldt function. Then, with assumptions in increasing order of strength as in the statement of Proposition \ref{prop-Nx} :
 
 \begin{enumerate}
 \item $|\psi_\chi(x)| \ll xe^{-c_A\sqrt{\log x}}$ if $q\ll (\log x)^A$, with $c_A$ depending only on $A$.
 \item $|\psi_\chi(x)| \ll  xe^{-c'\sqrt{\log x}}$ if $q\ll e^{2c'\sqrt{\log x}}$ and $c'$ an absolute constant.
 \item $|\psi_\chi(x)| \ll  x^{1/2 +\epsilon}$ for all $\epsilon >0$ and if $q\ll x$.
 \end{enumerate}
 The other implicit constants being absolute.
 \end{theo}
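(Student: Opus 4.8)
The plan is to derive all three estimates from a single truncated explicit formula for $\psi_\chi$, feeding in progressively stronger information about the zeros of $L(s,\chi)$ at the three levels of strength. First I would perform two harmless reductions. Since $\psi_\chi(x)=\sum_{p\le x}\chi(p)\log p$ differs from the full von Mangoldt sum $\sum_{n\le x}\Lambda(n)\chi(n)$ only by the prime-power terms $\sum_{k\ge2}\sum_{p^k\le x}\chi(p^k)\log p=O(\sqrt x\,\log^2 x)$, and since replacing $\chi$ by the primitive character $\chi^*$ inducing it alters the sum only on the finitely many primes dividing $q$, for a total of $O(\log q\,\log x)$, both discrepancies are absorbed in every error term claimed. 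Thus it suffices to estimate $\psi(x,\chi^*)=\sum_{n\le x}\Lambda(n)\chi^*(n)$ for the primitive $\chi^*$ modulo its conductor.

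Next I would invoke the classical truncated explicit formula obtained by applying Perron's formula to $-\frac{L'}{L}(s,\chi^*)$ and shifting the contour to the left: for $2\le T\le x$,
$$\psi(x,\chi^*)=-\sum_{|\gamma|\le T}\frac{x^\rho}{\rho}+O\!\left(\frac{x\log^2(qx)}{T}+\log x\right),$$
the sum running over the nontrivial zeros $\rho=\beta+i\gamma$ of $L(s,\chi^*)$; there is no main term $x$ precisely because $\chi$ is non-principal, so $L'/L$ has no pole at $s=1$. Combined with the standard zero-counting estimate, which gives $\#\{\rho:|\gamma-t|\le1\}\ll\log(q(|t|+2))$ and hence $\sum_{|\gamma|\le T}|\rho|^{-1}\ll\log^2(qT)$ by summation by parts, the entire problem reduces to bounding $x^\beta$ over the zeros, i.e.\ to locating them.

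At this point the three cases diverge. \emph{Case (3):} under the Riemann hypothesis for these $L$-functions every $\beta=1/2$, so $\sum_{|\gamma|\le T}x^\rho/\rho\ll x^{1/2}\log^2(qT)$; taking $T=x$ kills the explicit-formula error, and since $q\ll x$ forces $\log^2(qx)\ll x^\epsilon$, this yields $\psi(x,\chi^*)\ll x^{1/2+\epsilon}$. \emph{Case (2):} assuming no Siegel zero, I use the classical zero-free region $\beta<1-c/\log(q(|\gamma|+2))$ with an absolute $c$, valid for all nontrivial zeros, giving $\sum x^\rho/\rho\ll x^{1-c/\log(qT)}\log^2(qT)$; choosing $T=\exp(\sqrt{\log x})$ keeps $\log(qT)\ll\sqrt{\log x}$ throughout the admissible range $q\ll e^{2c'\sqrt{\log x}}$, so both the zero-sum and the error $x\log^2(qx)/T$ are $\ll x\exp(-c'\sqrt{\log x})$ with an effective absolute constant. \emph{Case (1):} unconditionally the same region holds except for a single possible exceptional real zero $\beta_1$ of the real character $\chi$; isolating its contribution $x^{\beta_1}/\beta_1$ and invoking Siegel's theorem in the form $\beta_1<1-c(\epsilon)q^{-\epsilon}$, the hypothesis $q\ll(\log x)^A$ gives $x^{\beta_1}\le x\exp(-c(\epsilon)(\log x)^{1-A\epsilon})$, so that choosing $\epsilon<1/(2A)$ and optimizing $T$ as in Case (2) for the remaining zeros produces the bound $x\exp(-c_A\sqrt{\log x})$.

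The main obstacle is exactly the exceptional zero in Case (1): controlling it forces the use of Siegel's theorem, whose ineffectivity is the sole reason $c_A$ depends on $A$ and cannot be made explicit, which is precisely the feature that confines the unconditional range in Proposition \ref{prop-Nx} to $Z\ll\log\log x$. Cases (2) and (3) are comparatively routine once the explicit formula is in hand, the only care needed being the bookkeeping of the logarithmic factors and of the lower-order trivial-zero and archimedean contributions, which one checks are swallowed by the stated error terms.
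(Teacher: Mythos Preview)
Your proof sketch is correct and follows the standard explicit-formula approach to the Siegel--Walfisz theorem and its conditional strengthenings: reduce to the primitive character and the full von Mangoldt sum, apply the truncated explicit formula, and feed in the appropriate zero information (the classical zero-free region plus Siegel's theorem unconditionally, the same region without an exceptional zero in case~(2), and $\beta=1/2$ under GRH in case~(3)). The bookkeeping you outline for the choice of $T$ and for the logarithmic factors is the right one.

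However, the paper does not give its own proof of this statement: it is quoted directly from \cite[Theorem~11.16]{MV}, with point~(3) attributed to the proof there. So there is no argument in the paper to compare against; your sketch simply reproduces the standard proof that the cited reference contains, and in that sense it is entirely in line with what the paper relies on.
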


Also, we will need the following classical lemma.

\begin{lemme}
\label{lemme-pipsi}
Let $\chi$ be a Dirichlet character and  $\pi_\chi(x) = \sum\limits_{p\le x}\chi(p)$. Then 
 $$|\pi_\chi(x)|\ll \frac{\max\limits_{t\le x}|\psi_\chi(t)|}{\log x} + \sqrt x.$$
 In particular, $|\pi_\chi(x)|\log x$ satisfies the same inequality as in Theorem \ref{theo-SW}.
\end{lemme}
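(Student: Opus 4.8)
The plan is to recover $\pi_\chi$ from $\psi_\chi$ by partial summation, in the classical way one passes from a prime sum weighted by $\log p$ to the unweighted one. Writing $\chi(p)=\chi(p)\log p/\log p$ and setting $A(t)=\psi_\chi(t)=\sum_{p\le t}\chi(p)\log p$, Abel summation against the weight $t\mapsto 1/\log t$ on the interval $[2,x]$ yields
$$\pi_\chi(x)=\frac{\psi_\chi(x)}{\log x}+\int_2^x\frac{\psi_\chi(t)}{t(\log t)^2}\,dt,$$
the boundary contribution at the lower endpoint vanishing because there is no prime below $2$.

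Write $M=\max_{t\le x}|\psi_\chi(t)|$. The boundary term is at once $\ll M/\log x$, so everything reduces to the integral. Bounding $|\psi_\chi(t)|$ by $M$ throughout is too lossy: since $\int_2^x\frac{dt}{t(\log t)^2}=\frac{1}{\log 2}-\frac{1}{\log x}$ tends to a constant, this would only give $\pi_\chi(x)\ll M$ and waste the saving by $\log x$. The reason is that the measure $\frac{dt}{t(\log t)^2}$ concentrates near small $t$, precisely where bounding $|\psi_\chi(t)|$ by $M$ is wasteful. I therefore split at $\sqrt x$. On $[\sqrt x,x]$ I use $|\psi_\chi(t)|\le M$ together with $\int_{\sqrt x}^x\frac{dt}{t(\log t)^2}=\frac{1}{\log\sqrt x}-\frac{1}{\log x}=\frac{1}{\log x}$, which contributes $\ll M/\log x$. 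On $[2,\sqrt x]$ I abandon all arithmetic information and insert only the Chebyshev bound $|\psi_\chi(t)|\le\sum_{p\le t}\log p\ll t$, so that this range contributes $\ll\int_2^{\sqrt x}\frac{dt}{(\log t)^2}\ll\sqrt x$. Summing the three pieces gives $|\pi_\chi(x)|\ll M/\log x+\sqrt x$, as claimed.

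Since this is a textbook estimate, there is no genuine obstacle; the only point requiring care is the choice of the split at $\sqrt x$, which balances the harmless Chebyshev term $\sqrt x$ against the saving $M/\log x$ one wants to preserve. For the final assertion, I observe that the two Siegel--Walfisz majorants $xe^{-c\sqrt{\log x}}$ and $x^{1/2+\epsilon}$ of Theorem \ref{theo-SW} are increasing in $x$, so that $M=\max_{t\le x}|\psi_\chi(t)|$ obeys the same bound with $x$ in place of $t$ (the small-$t$ range being negligible via the Chebyshev estimate in all the regimes considered in Proposition \ref{prop-Nx}). Multiplying $|\pi_\chi(x)|\ll M/\log x+\sqrt x$ by $\log x$ then gives $|\pi_\chi(x)|\log x\ll M+\sqrt x\log x$, and in each of the three cases the term $\sqrt x\log x$ is dominated by the main term, whence $|\pi_\chi(x)|\log x$ satisfies the same inequality as $\psi_\chi$.
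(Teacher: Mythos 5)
Your proof is correct and follows essentially the same route as the paper: partial summation relating $\pi_\chi$ to $\psi_\chi$, with the $1/\log x$ saving extracted from the range $[\sqrt x,x]$ and the range below $\sqrt x$ handled trivially at a cost of $O(\sqrt x)$. The only cosmetic difference is that you perform Abel summation on all of $[2,x]$ and then split the resulting integral at $\sqrt x$ (using Chebyshev below), whereas the paper first discards $\pi_\chi(\sqrt x)\ll\sqrt x$, rewrites the remaining prime sum as $\sum \Lambda(n)\chi(n)/\log n$ minus a prime-power contribution, and then sums by parts.
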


  \begin{proof}
  First write 
  $$\pi_\chi(x) - \pi_\chi(\sqrt x)= \sum_{\sqrt x< n \le x}\frac{\Lambda(n)}{\log n}\chi(n) - \!\!\!\sum_{\tiny{\begin{array}{c}\sqrt x < p^k \le x \\ k\ge 2\end{array}}}\frac{\chi(p^k)}{k}$$
  then summing by parts, we get the result.

%  En effet, en écrivant la formule explicite 
%  $$\psi_\chi(x) = \sum_{|t|<T}\frac{x^\rho}{\rho} + O\left(\frac{x(\log qx)^2}{T}\right)$$
%  où la constante implicite est absolue et la somme porte sur les zéro $\rho=\sigma+it$ de $L(s,\chi)$ tels que $|t|<T$.
%  
%  On peut supposer de plus $q<x$ autrement la borne est triviale. On a une région sans zéro donné par $\sigma>1-\frac{c'}{\log q+\log|t|}$ pour $|t|>2$ à l'exception éventuelle de la présence d'un zéro de Siegel. 
%  
%  Inconditionnellement on retrouve bien le théorème de Siegel-Walfisz usuel. Si on suppose cependant qu'il n'y a pas de zéro de Siegel alors
%  \begin{align*}
%  |\psi_\chi(x)|\ll x^{1-\frac{c'}{\log q+\log|t|}}
%  \end{align*} 
  \end{proof}

We now prove Proposition \ref{prop-Nx}.
 \begin{proof}[Proof of Proposition \ref{prop-Nx}]
  
  Let $g(t)=2^{\pi(Z(t))}$ and  denote by $(p_i)_i$ the prime number sequence. We have 
  
  \begin{align*}
  N_x&= \sum_{p\le x}\prod_{p_i\le Z(p)}\frac{1}{2}\left(1+\legendre{p_i}\right)\\
  &=  \sum_{p\le x}\frac{1}{g(p)}\sum_{I\subset \llbracket1;\pi(Z(p))\rrbracket}\legendre{q_I} \text{   \:\:\:\:with\:  }  q_I=\prod_{i\in I}p_i\\
  & = \sum_{I\subset \llbracket1;\pi(Z(x))\rrbracket} \sum_{\tiny{\begin{array}{c}p\le x \\
   \max I \le \pi(Z(p))\end{array}}}\frac{1}{g(p)} \legendre{q_I}\\
   &= \sum_{p\le x} \frac{1}{g(p)} + \sum_{\emptyset \neq I\subset \llbracket1;\pi(Z(x))\rrbracket} \sum_{\tiny{\begin{array}{c}p\le x \\
   \max I \le \pi(Z(p))\end{array}}}\frac{1}{g(p)} \legendre{q_I}
  \end{align*} 

A summation by parts gives $$\sum_{m\le j \le n}\frac{1}{g(p_j)} \left(\frac{q_I}{p_j}\right) = \frac{\pi_I(p_n)}{g(p_n)} -  \frac{\pi_I(p_{m-1})}{g(p_{m-1})} + \sum_{m\le j< n}\left(\frac{1}{g(p_j)}-\frac{1}{g(p_{j+1})}\right)\pi_I(p_j)$$   
 with $\pi_I(t) = \sum\limits_{p\le t}\legendre{q_I}$ and of course $\pi_\emptyset = \pi$.
 
 The main term of $N_x$ is greater than $\frac{\pi(x)}{g(x)}$ and the other terms are bounded by 
 $O\left(\max\limits_{t\le x}|\pi_I(t)|\right)$. Moreover, the log of the conductor of $ \left(\frac{q_I}{\cdot}\right)$ is bounded by $$\sum_{p\le Z(x)}\log p \le (1+\epsilon) Z(x) \text{\:\:\:  for $x$ large enough}.$$
 
 Thus,
 $$N_x \ge \frac{\pi(x)}{g(x)} + O\left(g(x)\max\limits_{t\le x, I}|\pi_I(t)|\right).$$
 According to Theorem \ref{theo-SW} and Lemma \ref{lemme-pipsi}, if $Z(x)\le \frac{A}{1+\epsilon}\log\log x$ then $g(x)$ and $q_I$ are $\le (\log x)^{A}$ so we have  
 $$N_x \ge \frac{\pi(x)}{(\log x)^A}\left(1+ O\left((\log x)^{2A}e^{-c_A\sqrt{\log x}}\right)\right) \to +\infty.$$
 
 In the second case (\textit{i.e.} we assume there is no exceptional zero for $L(s,\chi)$), with $c<c'/2$ we get $q_I\le e^{2c'\sqrt{\log x}}$ and $|\pi_I(x)| \ll \frac{x}{\log x}e^{-c'\sqrt{\log x}}$ so
 
  $$N_x \ge \pi(x)e^{-c\sqrt{\log x}}\left(1+ O\left(e^{2c\sqrt{\log x}-c'\sqrt{\log x}}\right)\right) \to +\infty.$$

In the last case (\textit{i.e.} under GRH), we have
$$N_x \ge \frac{\pi(x)}{x^{c}}\left(1+ O\left(x^{2c - 1/2 +\epsilon}\log x\right)\right) \to +\infty$$
taking $\epsilon$ small enough.
 \end{proof}
 
 \begin{rmq}
 Unfortunately we do not obtain a positive density of primes $p$ satisfying \eqref{condileg} since
 \begin{align*}
 \frac{N_x}{\pi(x)}&\ll \frac{1}{\pi(x)}\sum_{\sqrt x < p\le x}\frac{1}{g(p)} + \frac{x^{1/2}}{\pi(x)} + o(1) \\
                                & \ll \frac{1}{g(\sqrt x)} + o(1) \rightarrow 0
 \end{align*}
 in any case.
 \end{rmq}
 
We now explain how we get \eqref{variance} in the case where there is no twist by a Legendre symbol in \eqref{var}.

 \begin{prop}
 \label{prop-variance}
  Assume that the coefficients $(\hat f_0(n))_{n\ge1}$ are all real numbers, then
 $$\frac{1}{Y} \sum_{1\le m <Z} \hat f_0(m)^2B^2\left(\frac{m}{Y}\right) = \frac{(4\pi)^{\ell+1/2}}{\Gamma(\ell+1/2)}\langle f_0,f_0\rangle ||w||_2^2+ o(1).$$
 when $Y, Z\to+\infty$ and $Y=o(Z)$.
 \end{prop}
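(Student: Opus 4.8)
The plan is to first replace the truncated sum by the complete sum over all $m\ge1$, and then to evaluate the resulting Dirichlet series by a standard Mellin--Rankin-Selberg contour shift. Since $Y=o(Z)$, applying \eqref{YZ} with, say, $A=2$ gives
\[
\frac{1}{Y}\Big|\sum_{m\ge Z}\hat f_0(m)^2 B^2\left(\frac{m}{Y}\right)\Big|\ll \left(\frac{Y}{Z}\right)^2 = o(1),
\]
so it suffices to prove that
\[
\frac{1}{Y}\sum_{m\ge 1}\hat f_0(m)^2 B^2\left(\frac{m}{Y}\right) = \frac{(4\pi)^{\ell+1/2}}{\Gamma(\ell+1/2)}\langle f_0,f_0\rangle\,||w||_2^2 + o(1)
\]
as $Y\to+\infty$.

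Next I would insert the Mellin representation of $B^2$. Writing $\widehat{B^2}(s)=\int_0^{+\infty}B(x)^2 x^s\frac{dx}{x}$, which by Lemma \ref{lemme-B} is well-defined for {\rm Re} $s>-(\ell-1/2)$ (a region containing the strip ${\rm Re}\,s\ge 1/2$) and rapidly decreasing on vertical lines since $B$, hence $B^2$, is smooth and rapidly decreasing, Mellin inversion together with the fact that the $\hat f_0(m)$ are real gives, for $\sigma>1$,
\[
\sum_{m\ge 1}\hat f_0(m)^2 B^2\left(\frac{m}{Y}\right)=\frac{1}{2i\pi}\int_{(\sigma)}\widehat{B^2}(s)\,Y^s\,D(s,f_0\times\bar f_0)\,ds,
\]
the interchange being justified by absolute convergence of $D(s,f_0\times\bar f_0)=\sum_{m\ge1}\hat f_0(m)^2 m^{-s}$ for {\rm Re} $s>1$. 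By Proposition \ref{prop-RS}, the integrand is holomorphic on the strip $1/2<{\rm Re}\,s\le\sigma$ except for a single simple pole at $s=1$, coming from $D$, whose residue is $\frac{(4\pi)^{\ell+1/2}}{\Gamma(\ell+1/2)}\langle f_0,f_0\rangle$. I would then shift the contour to the line ${\rm Re}\,s=1/2+\delta$ for a small $\delta>0$, picking up
\[
\underset{s=1}{\rm{Res}}\Big[\widehat{B^2}(s)\,Y^s\,D(s,f_0\times\bar f_0)\Big]=\widehat{B^2}(1)\,Y\,\frac{(4\pi)^{\ell+1/2}}{\Gamma(\ell+1/2)}\langle f_0,f_0\rangle.
\]

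To finish, I would identify the constant $\widehat{B^2}(1)=\int_0^{+\infty}B(x)^2\,dx=||B||_2^2=||w||_2^2$, the last equality being Lemma \ref{lemme-plancherel}. The shifted integral on the line ${\rm Re}\,s=1/2+\delta$ is $O(Y^{1/2+\delta})$, whence, dividing by $Y$,
\[
\frac{1}{Y}\sum_{m\ge 1}\hat f_0(m)^2 B^2\left(\frac{m}{Y}\right)=\frac{(4\pi)^{\ell+1/2}}{\Gamma(\ell+1/2)}\langle f_0,f_0\rangle\,||w||_2^2+O(Y^{-1/2+\delta}),
\]
which together with the tail estimate above yields the claim.

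The main obstacle is the justification of the contour shift, i.e.\ controlling the size of the integrand on vertical lines in the strip $1/2<{\rm Re}\,s\le\sigma$. The rapid decay of $\widehat{B^2}(s)$ in $|{\rm Im}\,s|$ follows from the smoothness of $B$ (Lemma \ref{lemme-B}) by repeated integration by parts, but one also needs the Rankin-Selberg $L$-function $D(s,f_0\times\bar f_0)$ to be of at most polynomial growth on vertical strips away from its pole. This is the standard finite-order property of such $L$-functions, obtainable from the integral representation of Proposition \ref{prop-RS} together with the polynomial growth of the Eisenstein series $E_\infty(\cdot,s)$, or from a Phragm\'en--Lindel\"of argument; only a polynomial bound is needed so that the shifted integral is $o(Y)$.
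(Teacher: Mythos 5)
Your proof is correct and follows essentially the same route as the paper's: extend the truncated sum to the full sum via \eqref{YZ}, apply Mellin inversion to express it through $D(s,f_0\times\bar f_0)$, shift the contour past the simple pole at $s=1$ whose residue is given by Proposition \ref{prop-RS}, and identify $\widehat{B^2}(1)=||w||_2^2$ by Lemma \ref{lemme-plancherel}. The only difference is that you make explicit the justification of the contour shift (polynomial growth of the Rankin--Selberg $L$-function on vertical strips), which the paper leaves implicit.
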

 
 \begin{rmq}
 Note that the assumption that the Fourier coefficients are real is only used here. Nonetheless it is a crucial assumption.
 \end{rmq}
 
 \begin{proof}
  We have $\hat f_0(m)^2= |\hat f_0(m)|^2$ so from \eqref{YZ} we get
 \begin{align*}
 \frac{1}{Y} \sum_{1\le m <Z} \hat f_0(m)^2B^2\left(\frac{m}{Y}\right)
 & = \frac{1}{Y} \sum_{m \ge1} |\hat f_0(m)|^2B^2\left(\frac{m}{Y}\right) +o(1)\\
 &=\frac{1}{2i\pi}\int_{(\sigma)}D(s, f_0\times \bar f_0)Y^{s-1} \widehat{B^2}(s)ds+o(1) \text{  \;\; for $\sigma>1$}\\
 &= \frac{(4\pi)^{\ell+1/2}}{\Gamma(\ell+1/2)}\langle f_0,f_0\rangle\widehat{B^2}(1)+o(1)
 \end{align*}
 shifting the domain of integration and using the residue theorem. Applying Lemma \ref{lemme-plancherel}, we obtain the result.
 \end{proof}
 
 We have then essentially shown Corollary \ref{cor-horizontal} since under its assumptions, we have $Y_k=q_k^2/X_k\ll_\epsilon p_k^\epsilon$ and we can apply Theorem \ref{theo-moment-impair}  for odd moments. For even positive $\nu$, by  Theorem \ref{theo-momentpair}, \eqref{plusmoins} and \eqref{YZ} we get 
  $$\setE(E_{q_k}^\nu)=\frac{1}{2} \frac{\nu !}{(\nu/2)!}\left(\frac{1}{Y}\!\!\!
 \sum_{\tiny{\begin{array}{c}1\le m <Z(p_k)\end{array}}}\!\!\!
 \hat f_0(m)^2B^2\left(\frac{m}{Y}\right)+o(1)\right)^{\nu/2}+ \;\;\frac{1}{2}\times 0 +o(1)$$
 with  $Z(p_k)=\log\log p_k$ and $p_k$ satisfies \eqref{condileg}. By Proposition \ref{prop-Nx}, there are infinitely many such primes so
 the moment  converges to $\frac{1}{2} \frac{\nu !}{(\nu/2)!}\left(\frac{(4\pi)^{\ell+1/2}}{\Gamma(\ell+1/2)}\langle f_0,f_0\rangle ||w||_2^2\right)^{\nu/2}$ by Proposition \ref{prop-variance} for a subsequence of prime numbers.
 
 Hence, there exists a subsequence of $q_k$ such that the moments of $E_{q_k}$ converge to the moments of the following distribution
 \begin{equation}
 \label{distribution}
 \frac{1}{2}\delta_0+\frac{1}{2}\N(0,2V_{f,w}).
\end{equation}
 
 Since the moment generating function exists for a normal distribution, it exists for the distribution in \eqref{distribution} which
  is then determined by its moments. Thus, the convergence of the moments of $E_{q_k}$ implies the convergence in 
 law (see for example \cite[\S 5.8.4]{Gut}).
 
 We also see how the range condition of Corollary \ref{cor-horizontal} can be relaxed depending on which case  
 of Proposition \ref{prop-Nx} we look at.
 
  \vskip 0.5 cm \subsection{Variance for a fixed prime}
 
In general, if we fix an odd prime number $p$, we can write

 $$\sum_{\tiny{\begin{array}{c}1\le m <Z \\ \left(\frac{m}{p}\right)=\pm1\end{array}}}
 \hat f_0(m)^2B^2\left(\frac{m}{Y}\right)=\frac{1}{2}\sum_{m<Z} \hat f_0(m)^2B^2\left(\frac{m}{Y}\right)\pm \frac{1}{2}\sum_{m<Z}\legendre{m} \hat f_0(m)^2B^2\left(\frac{m}{Y}\right).$$
 
 If $\chi$ is a real Dirichlet character of conductor $p$, it follows from \cite[Proposition 3.12]{Ono} that 
 $$f_{0,\chi}(z)=\sum_{n\ge1}\chi(n)\hat f_0(n) n^{\ell/2-1/4}e(nz)$$
 is a cusp form of weight $\ell+1/2$ but only for the congruence subgroup $\Gamma_0(4p^2)$. Yet, Proposition \ref{prop-RS} holds for these forms (recall that $f_0$ is \textit{a fortiori} such a form) replacing $\Gamma_0(4)$ by $\Gamma_0(4p^2)$ and $\langle f,g\rangle$ by 
 $$\langle f,g\rangle_p :=\frac{1}{{\rm Vol}(\Gamma_0(4p^2)\backslash\calH)}\int_{\Gamma_0(4p^2)\backslash\calH}f(z)\overline{g(z)}y^{\ell+1/2}\frac{dxdy}{y^2}$$
 and thus we get an analogue of Proposition \ref{prop-variance} : 
  $$\frac{1}{Y} \sum_{1\le m <Z} \chi(m)\hat f_0(m)^2B^2\left(\frac{m}{Y}\right) = \frac{(4\pi)^{\ell+1/2}}{\Gamma(\ell+1/2)}\langle f_0,f_{0,\chi}\rangle_p ||w||_2^2+ o(1).$$
 for $\chi=\legendre{\cdot}$ when $Y, Z\to+\infty$ and $Y=o(Z)$.\\
 
 So fixing $p$ but taking $Y, q=p^N\to+\infty$ with $Y\ll_\epsilon q^\epsilon$ for any $\epsilon$, we obtain the convergence of all moments of the quantity in \eqref{sommemoins} and which, under these assumptions, converges in law to the mixed distribution 
$$\frac{1}{2}\N(0,V_{f,w}+ V_{f,w}^{(p)}) + \frac{1}{2}\N(0,V_{f,w}- V_{f,w}^{(p)})$$
with $ V_{f,w}^{(p)}=\frac{(4\pi)^{\ell+1/2}}{\Gamma(\ell+1/2)}\langle f_{0,\chi}, f_0\rangle_p ||w||_2^2$ and $\chi =\legendre{\cdot}$.

\vskip 1.5 cm \section{About the integral weight case}

In fact, our work can be applied to compute the moments of Fourier coefficients of integral weight modular
 forms in arithmetic progressions of modulus $q=p^N$ with $p$ prime and $N>1$. We will not recall the basic
 definitions and properties of these forms but a good introduction to the whole theory can be found in
  \cite{DiaShu}. Let
 $$f(z)=\sum_{n\ge1}a(n)n^{(\kappa-1)/2}e(nz)$$ for any $z\in\calH$, be a holomorphic cusp form of level 1 and even weight 
 $\kappa$.

Let us recall that if $f$ is an eigenform (\emph{i.e.} an eigenvector for all the Hecke operators) then its normalized Fourier coefficients satisfy  Deligne's bound (see \cite{weil1})
\begin{equation}
\label{deligne}
|a(n)|\ll_{f,\epsilon}n^\epsilon
\end{equation}
for any positive integer $n$ and any $ \epsilon>0$.\\

For any smooth and compactly supported function $w : ]0,+\infty[\to\setR_{\ge 0}$, any $X>0$, any odd prime power $q=p^N$ and any integer $a$ coprime to $q$, we define once again the following
\begin{align}
\label{defS2}
 S(X,q,a)&=\sum_{n=a\:[q]}a(n)w\left(\frac{n}{X}\right),\\
\label{defE2}
E(X,q,a)&=\frac{S(X,q,a)}{\sqrt{X/q}}.
\end{align}

When $q$ is not a prime (so we are not in the conditions of Theorem \ref{theo-FGKM}) we obtain an analogue of Theorem \ref{theo-moment} which implies of course Theorem \ref{theo-moment2intro}.
\begin{theo} 
\label{theo-moment2}
Let $f$ be a Hecke eigenform of level 1 and of even weight $\kappa$. Let $q=p^N$ be an odd prime power with $N>1$ and let $X>0$. Define $E(X,q,a)$ as in \eqref{defE2}.
Let $\nu$ be a positive integer, $C_\nu=\big(2\nu^{2^{\nu-1}}\big)^{-1}$ and $e\in\{\pm1\}$. 
Assume there exists $\delta>0$ such that
$$1 \le Y^{2^{\nu-2}+\delta}<C_\nu q$$
 where $Y=q^2/X$. Then for any $0<\eta<\delta 2^{2-\nu}$ and any $\epsilon>0$, we have
 \begin{align}
 \frac{2}{\varphi(q)}\!\!\!\sum_{\tiny\begin{array}{c}a\:[q]\\ \legendre{a}=e\end{array}}\!\!\!E(X,q,a)^\nu &=
   \delta_{2\mid\nu}\frac{\nu !}{(\nu/2)!}\left(\frac{1}{Y}\sum_{\tiny{\begin{array}{c}1\le m <Y^{1+\eta}\\ \left(\frac{m}{p}\right)=e\end{array}}}
 a(m)^2B^2\left(\frac{m}{Y}\right)\right)^{\nu/2}\\
 &\phantom{aaaaaaaaaaaaaaaaaaaaaaaaaaaa} + O\left(Y^{-1/2 + \epsilon}+\frac{Y^{\nu/2+\epsilon}}{p}\right)
 \nonumber
 \end{align}
%\begin{equation}
%\label{momentpair}
%%\frac{2}{\varphi(q)}\!\!\!\sum_{\tiny\begin{array}{c}a\:[q]\\ \legendre{a}=\pm 1\end{array}}\!\!\!E_{q}(a)^\nu
%\setE^\pm(E_{q}^\nu)=  \delta_{2\mid\nu}\frac{\nu !}{(\nu/2)!}\left(\frac{1}{Y}
% \sum_{\tiny{\begin{array}{c}1\le m <Y^{1+\epsilon}\\ \left(\frac{m}{p}\right)=\pm1\end{array}}}
% \hat f_0(m)^2B^2\left(\frac{m}{Y}\right)\right)^{\nu/2} + O\left(Y^{-1/3 +\epsilon}+\frac{Y^{\nu/2+\epsilon}}{p}\right)
% \end{equation}
where $(a(m))_{m\ge 1}$ are the  normalized Fourier coefficients of $f$ and $B$ is a smooth rapidly decreasing function depending only on $w$ and $k$. 
\end{theo}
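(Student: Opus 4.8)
The plan is to mirror the four–step strategy of Sections 3--5, the only essential new input being that for $q=p^N$ with $N\ge2$ the relevant Kloosterman sums admit the explicit Salié-type evaluation of Proposition \ref{prop-Salie}. First I would detect the congruence $n\equiv a\:[q]$ with additive characters and apply the Voronoï formula for the level $1$ form $f$. Since $f$ is self-dual (it is essentially its own Fricke transform, so its coefficients at the cusps $\infty$ and $0$ coincide, which is why $a(m)$ appears on both sides), the additively twisted $L$-function obeys the classical Hecke functional equation, and the analogue of \eqref{voro} produces the normalized Kloosterman sums ${\rm Kl}_q(a,m)$ in place of the Salié sums, now with $Y=q^2/X$ (the factor $4$ of the half-integral case disappears at level $1$). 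Exactly as in the proof of Proposition \ref{prop-moment}, the contributions of the moduli $p^r$ with $r<N$ are absorbed into an error term by Proposition \ref{prop-B}, yielding, up to a unimodular constant,
\[
E_q(a)=\frac{1}{\sqrt Y}\sum_{1\le m<Y^{1+\eta}}a(m)\,{\rm Kl}_q(a,m)\,B\!\left(\frac{m}{Y}\right)+O\!\left(Y^{1/2+\epsilon}p^{-3/2}+Y^{-A}\right),
\]
which I would then raise to the $\nu$-th power as in Proposition \ref{prop-moment}.

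Next I would invoke Proposition \ref{prop-Salie}: for $N\ge2$ and $m$ coprime to $p$ one has ${\rm Kl}_q(a,m)=\epsilon_q\sum_{x^2=am\,[q]}\left(\frac{x}{q}\right)e_q(2x)$, a short trigonometric sum over the two roots $\pm{\sqrt{am}}^{\,q}$, nonzero only when $\legendre{a}=\legendre{m}$. This is precisely where $N>1$ is indispensable: for $N=1$ no such closed form exists, which is why that regime is governed by \cite{FGKM} and the trace-function formalism instead. I would define the corresponding integral-weight analogue of $\text{Sa}_q$ and compute its joint moments as in Lemma \ref{lemme-momentsalie}, writing $a=\mu_p b^2$, expanding the product over $\text{\boldmath $e$}\in\{\pm1\}^\nu$, and reducing the $b$-sum to the Ramanujan-sum evaluation. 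The one genuinely new piece of bookkeeping is the internal Kronecker symbol $\left(\frac{x}{q}\right)$: it is trivial when $N$ is even (so ${\rm Kl}_q={\rm Sal}_q$ and we are literally back in the Salié setting), and when $N$ is odd it contributes a factor $\left(\frac{b}{q}\right)^\nu$; for even $\nu$ this equals $1$, so the main term is unaffected, while for odd $\nu$ the bound $\bigl|\left(\frac{x}{q}\right)\bigr|\le1$ lets one estimate trivially, which suffices since those moments only feed the error. The outcome is again a sum of $\delta_q$'s exactly as in \eqref{Esalie}.

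From here the combinatorial machinery transfers verbatim: I would rearrange the $\nu$-fold sum by squarefree parts via \eqref{reorg} to reach the analogue of \eqref{eq}, and apply the purely arithmetic Lemma \ref{lemme-moment}, which is insensitive to the weight, to replace each $\delta_q$ by $\delta_0$ and force the relations $\sum_{i\in\sigma^{-1}(j)}e_ir_i=0$. The decisive simplification relative to the half-integral case is that Deligne's bound \eqref{deligne}, $|a(m)|\ll_\epsilon m^\epsilon$, is far stronger than \eqref{bound}: in the analogue of Proposition \ref{prop-sigmasigma} a block with $|\sigma^{-1}(j)|=k\ge3$ now contributes only $Y^\epsilon$, since $\sum_{t<Y^{1+\eta}}|a(t)|^{k}t^{-(k-1)/2}\ll Y^{\epsilon}\sum_{t<Y^{1+\eta}}|a(t)|^2 t^{-1}\ll Y^{\epsilon}$ by \eqref{deligne} together with the Rankin--Selberg estimate (the integral-weight analogue of Lemma \ref{lemme-RS}), whereas a block of size $2$ still gives $Y^{(1+\eta)/2}$. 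This upgrades the bound to $|\Sigma_\sigma(q)|\ll Y^{(1+\eta)\left(\frac{\nu}{2}-\frac{s-\sigma_2}{2}\right)+\epsilon}$, so every non-diagonal $\sigma$ (those with $\sigma_2<s$, hence $s-\sigma_2\ge1$) contributes $O(Y^{-1/2+\epsilon})$ after division by $Y^{\nu/2}$; this is exactly the improved error $Y^{-1/2+\epsilon}$ of the statement, in place of the $Y^{-1/3+\epsilon}$ of Theorem \ref{theo-moment-impair}.

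Finally, the main term is extracted precisely as in Proposition \ref{prop-sigmapair} and Theorem \ref{theo-momentpair}: for even $\nu$ it comes entirely from $\sigma\in S^{*}(\nu,\nu/2)$ with $\sigma_2=\nu/2$, where only the diagonal $m_{2i-1}=m_{2i}$ survives and the count of such $\sigma$ produces the factor $\nu!/(\nu/2)!$, yielding $\bigl(\tfrac1Y\sum_{m<Y^{1+\eta},\,\legendre{m}=e}a(m)^2B^2(m/Y)\bigr)^{\nu/2}$, while odd moments vanish to this order. The terms $\tfrac1p\Sigma_\sigma(q/p)$ are bounded by Proposition \ref{prop-B} and account for the remaining $Y^{\nu/2+\epsilon}/p$. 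I expect the only delicate point to be the moment identity for the explicit Kloosterman sums when $N$ is odd, namely controlling the internal Kronecker symbol so that both the Ramanujan-sum reduction and the diagonal main term survive; as indicated this is settled by the parity of $\nu$. Everything else is a direct, weight-insensitive transcription of the half-integral argument, with \eqref{deligne} performing the role that \eqref{bound} played there.
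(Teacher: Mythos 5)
Your outline follows the same route as the paper's own proof: Vorono\u\i\ summation for the level-one form producing Kloosterman sums, the explicit evaluation of ${\rm Kl}_q$ from Proposition \ref{prop-Salie} (valid precisely because $N\ge2$), and then the combinatorial machinery of Sections 4--5, with Deligne's bound \eqref{deligne} replacing \eqref{bound} and upgrading the error to $Y^{-1/2+\epsilon}$; your treatment of the even-$N$ case (where ${\rm Kl}_q={\rm Sal}_q$) and of the block-size estimates is correct and matches the paper. The genuine gap is in the case $N$ odd, where your handling of the internal Kronecker symbol $\left(\frac{x}{q}\right)$ breaks down for both parities of $\nu$. Writing $a=\mu_p b^2$ and expanding the product of Kloosterman sums, that symbol contributes $\left(\frac{b}{q}\right)^{\nu}\prod_{i=1}^{\nu}\left(\frac{e_i{\sqrt{m_i}}^q}{q}\right)$: the first factor involves the averaging variable $b$, the second does not and survives the average.

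For odd $\nu$, the character $\left(\frac{\cdot}{q}\right)^{\nu}$ is non-principal, so the $b$-average is a genuine normalized Gauss sum $G\bigl(\left(\frac{\cdot}{q}\right)^{\nu},\sum_i e_i{\sqrt{m_i}}^q\bigr)$ rather than a Ramanujan sum, and your proposed trivial estimate $\bigl|\left(\frac{x}{q}\right)\bigr|\le1$ only bounds the Kloosterman moment by $2^{\nu}$, hence the $\nu$-th moment of $E_q$ by $O(Y^{\nu/2+\epsilon})$ --- which is \emph{not} within the stated error $O\bigl(Y^{-1/2+\epsilon}+Y^{\nu/2+\epsilon}/p\bigr)$, whose second term carries the crucial factor $1/p$. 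What is needed, and what the paper uses, is square-root cancellation in the Gauss sum, $|G|\ll q^{-1/2}\le 1/p$, which holds exactly because $N\ge2$. For even $\nu$, the factor $\left(\frac{b}{q}\right)^{\nu}$ is indeed trivial, but the surviving factor $\prod_i\left(\frac{e_i{\sqrt{m_i}}^q}{q}\right)$ multiplies every term of the resulting $\delta_q$-sum; on the diagonal ($m_{2i-1}=m_{2i}$, $e_{2i-1}=-e_{2i}$) it equals $\left(\frac{-1}{q}\right)^{\nu/2}$, which is $(-1)^{\nu/2}$ when $p\equiv 3\:[4]$ and $N$ is odd, so dropping it could flip the sign of the main term (yielding, say, a negative second moment). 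One must combine it with the prefactor $\epsilon_q^{\nu}$ from Proposition \ref{prop-Salie} and use $\epsilon_q^2=\left(\frac{-1}{q}\right)$ to check that $\epsilon_q^{\nu}\left(\frac{-1}{q}\right)^{\nu/2}=1$; this is precisely the bookkeeping behind the paper's statement that the main term comes from tuples with exactly half of the $e_1,\ldots,e_{\nu}$ negative. The conclusion does survive this check, but it is not settled by the parity of $\nu$ alone, as you assert.
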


We will only give the sketch of the proof for the moment over the squares  modulo $q$ since it is basically the same as for Theorem \ref{theo-moment}. The computational details are even easier using \eqref{deligne}.

\begin{proof}
First we give the following Vorono\u\i\: formula for a classical cusp form. This version is equivalent to the one given in \cite[Theorem 2.2]{godber}. For $(b,q)=1$ we have

$$\sum_{n\ge1}a(n)e_q(bn)w\left(\frac{n}{X}\right)=i^\kappa
\frac{X}{q}\sum_{m\ge1}a(m)e_q(-\bar b m)B\left(\frac{m}{q^2/X}\right)$$
where $$B(x)=\int_{(\sigma)}\frac{\Gamma_f(s)}{\Gamma_f(1-s)}\hat w(1-s)x^{-s}ds$$
for $x>0$, $\sigma > -(\kappa-1)/2$ and $\Gamma_f(s)=(2\pi)^{-(s+(\kappa-1)/2)}\Gamma(s+(\kappa-1)/2)$. 
Note that $B$ satisfies the same conditions as in Section 3.\\

Thus, as in the proof of Proposition \ref{prop-moment}, we get
\begin{align}
\label{calculE}
E(X,q,a)=\frac{i^\kappa}{\sqrt Y}\sum_{m\ge1}a(m)\text{Kl}_q(m,a)B\left(\frac{m}{Y}\right) + O(Y^{\nu/2+\epsilon}p^{-3/2})
\end{align}
so we have to estimate the moment of Kloosterman sums. For $N>1$, these sums are very similar to Salié sums. Precisely,  let $m_1,\ldots,m_\nu$ be any positive integers such that $\legendre{m_i}=1$ for any $i\in\llbracket1,\nu\rrbracket$. From Proposition \ref{prop-Salie}, we have for even $N$,
$$\setE^+\left(\prod_{i=1}^\nu \text{Kl}_q(m_i,\cdot)\right)=
\sum\limits_{\text{\boldmath $e$}\in\{\pm 1\}^\nu}\delta_q\left({\sum\limits_{i=1}^\nu e_i{\sqrt{m_i}}^q}\right) + O(1/p). $$

While if $N$ is odd and $p=1\:[4]$,
$$\setE^+\left(\prod_{i=1}^\nu \text{Kl}_q(m_i,\cdot)\right)=\sum\limits_{\text{\boldmath $e$}\in\{\pm 1\}^\nu}\legendre{{\sqrt{m_1\cdots m_\nu}}^q}G\left(\left(\frac{\cdot}{q}\right)^\nu, \sum_{i=1}^\nu e_i{\sqrt{m_i}}^q\right)+  O(1/p) $$
with $G(\chi,a)=\frac{1}{q}\sum\limits_{x\:[q]}\chi(x)e_q(ax)$ is the classical Gauss sum for any multiplicative character $\chi$ and any $a$ modulo $q$.\\

Finally,  if $N$ is odd and $p=3\:[4]$,
$$\setE^+\left(\prod_{i=1}^\nu \text{Kl}_q(m_i,\cdot)\right)=i^\nu\sum\limits_{\text{\boldmath $e$}\in\{\pm 1\}^\nu}\legendre{{e_1\sqrt{m_1}^q\cdots e_\nu{\sqrt m_\nu}}^q}G\left(\left(\frac{\cdot}{q}\right)^\nu, \sum_{i=1}^\nu e_i{\sqrt{m_i}}^q\right)+  O(1/p). $$
\vskip 0.6cm
If $N$ is even then the proof is exactly the same. Note that we do not need Lemma \ref{lemme-RS} to get an analogue of Proposition \ref{prop-sigmasigma} since \eqref{deligne} is sufficient. Actually, we even get a better remaining term.\\

If $N$ is odd then for odd $\nu$ and any $a$ mod $q$ we have $G\left(\left(\frac{\cdot}{q}\right)^\nu, a\right)\ll q^{-1/2}\le 1/p$ since $N>1$. So we trivially bound the moment.\\

If $N$ is odd, $\nu$ is even and $p=3\:[4]$ (the case $p=1\:[4]$ is similar) then we have 

\begin{align*}
&\phantom{zzzzz}\setE^+\left(E(X,q,\cdot)^\nu\right)=\\
&\phantom{zzzzzzzzzzz}\frac{i^\nu}{Y^{\nu/2}}\sum\limits_{\text{\boldmath $e$}\in\{\pm 1\}^\nu}\underset{\tiny 1\le m_1,\ldots,m_\nu < Y^{1+\eta}}{{\sum}^\Box}\prod_{i=1}^\nu \legendre{e_i{\sqrt m_i}^q}a(m_i)B\left(\frac{m_i}{Y}\right)\delta_q\left({\sum\limits_{i=1}^\nu e_i{\sqrt{m_i}}^q}\right)\\
&\phantom{zzzzzzzzzzzz}+O\left(\frac{Y^{\nu/2+\epsilon}}{p}+Y^{-A}\right).
\end{align*}

As in Proposition \ref{prop-sigmapair}, we prove that the main term comes from the tuples {\boldmath$m$}  which take exactly $\nu/2$ values and with exactly half of the $e_1,\ldots,e_\nu$ being negative.\\

Hence, we reach the desired conclusion.

\end{proof}

Corollary \ref{cor-horizontal2} follows easily from Section \ref{section6}, adapting the discussion to an integral weight cusp form.

 \newpage

%\renewcommand\bibname{References}
%\bibliography{bibli}
%\bibliographystyle{plain}
\begin{bibdiv}
\begin{biblist}

\bib{Bloomeraverage}{article}{
   author={Blomer, V.},
   title={The average value of divisor sums in arithmetic progressions},
   journal={Q. J. Math.},
   volume={59},
   date={2008},
   number={3},
   pages={275--286}
}
	
\bib{BruiKoh}{article}{
   author={Bruinier, Jan Hendrik},
   author={Kohnen, Winfried},
   title={Sign changes of coefficients of half integral weight modular
   forms},
   conference={
      title={Modular forms on Schiermonnikoog},
   },
   book={
      publisher={Cambridge Univ. Press, Cambridge},
   },
   date={2008},
   pages={57--65}
}

\bib{Cipra}{article}{
   author={Cipra, Barry A.},
   title={On the Niwa-Shintani theta-kernel lifting of modular forms},
   journal={Nagoya Math. J.},
   volume={91},
   date={1983},
   pages={49--117}
}

\bib{conrey}{article}{
   author={Conrey, J. B.},
   author={Iwaniec, H.},
   title={The cubic moment of central values of automorphic $L$-functions},
   journal={Ann. of Math. (2)},
   volume={151},
   date={2000},
   number={3},
   pages={1175--1216},
   }
   
   \bib{weil1}{article}{
   author={Deligne, Pierre},
   title={La conjecture de Weil. I},
   language={French},
   journal={Publ. Math. Inst. Hautes \'{E}tudes Sci.},
   number={43},
   date={1974},
   pages={273--307}
}
   
 \bib{DiaShu}{book}{
   author={Diamond, Fred},
   author={Shurman, Jerry},
   title={A first course in modular forms},
   series={Graduate Texts in Mathematics},
   volume={228},
   publisher={Springer-Verlag, New York},
   date={2005}
}

\bib{FGKM}{article}{
   author={Fouvry, \'{E}tienne},
   author={Ganguly, Satadal},
   author={Kowalski, Emmanuel},
   author={Michel, Philippe},
   title={Gaussian distribution for the divisor function and Hecke
   eigenvalues in arithmetic progressions},
   journal={Comment. Math. Helv.},
   volume={89},
   date={2014},
   number={4},
   pages={979--1014}
}

\bib{godber}{article}{
   author={Godber, Daniel},
   title={Additive twists of Fourier coefficients of modular forms},
   journal={J. Number Theory},
   volume={133},
   date={2013},
   number={1},
   pages={83--104}
}

\bib{Gut}{book}{
   author={Gut, Allan},
   title={Probability: a graduate course},
   series={Springer Texts in Statistics},
   edition={2},
   publisher={Springer, New York},
   date={2013}
}

\bib{HughesRud}{article}{
   author={Hughes, C. P.},
   author={Rudnick, Z.},
   title={On the distribution of lattice points in thin annuli},
   journal={Int. Math. Res. Not. IMRN},
   date={2004},
   number={13},
   pages={637--658}
}

\bib{Hulse}{article}{
   author={Hulse, Thomas A.},
   author={Kiral, E. Mehmet},
   author={Kuan, Chan Ieong},
   author={Lim, Li-Mei},
   title={The sign of Fourier coefficients of half-integral weight cusp
   forms},
   journal={Int. J. Number Theory},
   volume={8},
   date={2012},
   number={3},
   pages={749--762}
}

\bib{IwKo}{book}{
   author={Iwaniec, Henryk},
   author={Kowalski, Emmanuel},
   title={Analytic number theory},
   series={American Mathematical Society Colloquium Publications},
   volume={53},
   publisher={American Mathematical Society, Providence, RI},
   date={2004}
   }
   
\bib{squarefree}{article}{
   author={Jiang, Y.-J.},
   author={Lau, Y.-K.},
   author={L\"{u}, G.-S.},
   author={Royer, E.},
   author={Wu, J.},
   title={On Fourier coefficients of modular forms of half integral weight
   at squarefree integers},
   journal={Math. Z.},
   volume={293},
   date={2019},
   number={1-2},
   pages={789--808}  
}

\bib{sign2}{article}{
	author = {Jiang, Y-J} 
        author = {Lau, Yuk-Kam} 
        author = {Lü, G-S} 
        author = {Royer, Emmanuel} 
        author = {Wu, Jie},
	Title = {Sign changes of Fourier coefficients of modular forms of half integral weight, 2},
	Year = {2018},
	eprint = {https://arxiv.org/abs/1602.08922}}

\bib{coursKo}{book}{
   author={Kowalski, Emmanuel},
   title={Un cours de th\'{e}orie analytique des nombres},
   language={French},
   series={Cours Sp\'{e}cialis\'{e}s [Specialized Courses]},
   volume={13},
   publisher={Soci\'{e}t\'{e} Math\'{e}matique de France, Paris},
   date={2004}
}

\bib{KR}{article}{
   author={Kowalski, Emmanuel},
   author={Ricotta, Guillaume},
   title={Fourier coefficients of $GL(N)$ automorphic forms in arithmetic
   progressions},
   journal={Geom. Funct. Anal.},
   volume={24},
   date={2014},
   number={4},
   pages={1229--1297}
}

\bib{sign}{article}{
   author={Lau, Yuk-kam},
   author={Royer, Emmanuel},
   author={Wu, Jie},
   title={Sign of Fourier coefficients of modular forms of half-integral
   weight},
   journal={Mathematika},
   volume={62},
   date={2016},
   number={3},
   pages={866--883}
}

\bib{LauZhao}{article}{
   author={Lau, Yuk-Kam},
   author={Zhao, Lilu},
   title={On a variance of Hecke eigenvalues in arithmetic progressions},
   journal={J. Number Theory},
   volume={132},
   date={2012},
   number={5},
   pages={869--887}
}

\bib{LesYesh}{article}{
   author={Lester, Stephen},
   author={Yesha, Nadav},
   title={On the distribution of the divisor function and Hecke eigenvalues},
   journal={Israel J. Math.},
   volume={212},
   date={2016},
   number={1},
   pages={443--472}
}

\bib{Luaverage}{article}{
   author={L\"{u}, Guangshi},
   title={The average value of Fourier coefficients of cusp forms in
   arithmetic progressions},
   journal={J. Number Theory},
   volume={129},
   date={2009},
   number={2},
   pages={488--494}
}

%\bib{indepsqrt}{article}{
%	Author = {Madore, David A.},
%	Title = {Indépendance linéaire de certains nombres algébrique}
%	}
		
\bib{voro}{article}{
   author={Miller, Stephen D.},
   author={Schmid, Wilfried},
   title={Summation formulas, from Poisson and Voronoi to the present},
   conference={
      title={Noncommutative harmonic analysis},
   },
   book={
      series={Progr. Math.},
      volume={220},
      publisher={Birkh\"{a}user Boston, Boston, MA},
   },
   date={2004},
   pages={419--440}
}

\bib{MV}{book}{
   author={Montgomery, Hugh L.},
   author={Vaughan, Robert C.},
   title={Multiplicative number theory. I. Classical theory},
   series={Cambridge Studies in Advanced Mathematics},
   volume={97},
   publisher={Cambridge University Press, Cambridge},
   date={2007}
}

\bib{Ono}{book}{
   author={Ono, Ken},
   title={The web of modularity: arithmetic of the coefficients of modular
   forms and $q$-series},
   series={CBMS Regional Conference Series in Mathematics},
   volume={102},
   publisher={Published for the Conference Board of the Mathematical
   Sciences, Washington, DC; by the American Mathematical Society,
   Providence, RI},
   date={2004}
}

\bib{Wald}{article}{
   author={Waldspurger, J.-L.},
   title={Sur les coefficients de Fourier des formes modulaires de poids
   demi-entier},
   language={French},
   journal={J. Math. Pures Appl. (9)},
   volume={60},
   date={1981},
   number={4},
   pages={375--484}
}

\end{biblist}
\end{bibdiv}

\end{document}